\journal{Journal of Computational Physics}
\newtheorem{lemma}{Lemma}
\newtheorem{prop}{Proposition}
\newtheorem{rem}{Remark}
\begin{document}

\begin{frontmatter}

\title{Conservative DG Method for the Micro-Macro Decomposition of the Vlasov--Poisson--Lenard--Bernstein Model\tnoteref{support}\tnoteref{copyright}}
\tnotetext[support]{
This research has been supported by the DOE Office of Advance Scientific Computing Research through the SciDAC Partnership Center for High-fidelity Boundary Plasma Simulation under contract DE-AC05-00OR22725 with Oak Ridge National Laboratory (ORNL), managed by UT-Battelle, LLC for the U.S. Department of Energy.}
\tnotetext[copyright]{
This manuscript has been authored in part by UT-Battelle, LLC, under contract DE-AC05-00OR22725 with the US Department of Energy (DOE). The US government retains and the publisher, by accepting the article for publication, acknowledges that the US government retains a nonexclusive, paid-up, irrevocable, worldwide license to publish or reproduce the published form of this manuscript, or allow others to do so, for US government purposes. DOE will provide public access to these results of federally sponsored research in accordance with the DOE Public Access Plan (http://energy.gov/downloads/doe-public-access-plan).}


\author[ornl,utk-phys]{Eirik Endeve\corref{cor}}
\ead{endevee@ornl.gov}

\author[ornl,utk-math]{Cory D. Hauck}
\ead{hauckc@ornl.gov}

\cortext[cor]{Corresponding author. Tel.:+1 865 576 6349; fax:+1 865 241 0381}

\address[ornl]{Multiscale Methods and Dynamics Group, Oak Ridge National Laboratory, Oak Ridge, TN 37831 USA }

\address[utk-phys]{Department of Physics and Astronomy, University of Tennessee Knoxville, TN 37996-1200}

\address[utk-math]{Department of Mathematics, University of Tennessee Knoxville, TN 37996-1320}


\begin{abstract}
The micro-macro (mM) decomposition approach is considered for the numerical solution of the Vlasov--Poisson--Lenard--Bernstein (VPLB) system, which is relevant for plasma physics applications.  
In the mM approach, the kinetic distribution function is decomposed as $f=\cE[\bsrho_{f}]+g$, where $\cE$ is a local equilibrium distribution, depending on the macroscopic moments $\bsrho_{f}=\int_{\bbR}\be fdv=\vint{\be f}_{\bbR}$, where $\be=(1,v,\f{1}{2}v^{2})^{\rm{T}}$, and $g$, the microscopic distribution, is defined such that $\vint{\be g}_{\bbR}=0$.  
We aim to design numerical methods for the mM decomposition of the VPLB system, which consists of coupled equations for $\bsrho_{f}$ and $g$.  
To this end, we use the discontinuous Galerkin (DG) method for phase-space discretization, and implicit-explicit (IMEX) time integration, where the phase-space advection terms are integrated explicitly and the collision operator is integrated implicitly.  
We give special consideration to ensure that the resulting mM method maintains the $\vint{\be g}_{\bbR}=0$ constraint, which may be necessary for obtaining (i) satisfactory results in the collision dominated regime with coarse velocity resolution, and (ii) unambiguous conservation properties.  
The constraint-preserving property is achieved through a consistent discretization of the equations governing the micro and macro components.  
We present numerical results that demonstrate the performance of the mM method.  
The mM method is also compared against a corresponding DG-IMEX method solving directly for $f$.  
\end{abstract}

\begin{keyword}
Kinetic equation, 
Hyperbolic conservation laws, 
Discontinuous Galerkin, 
Implicit-Explicit,
Lenard--Bernstein, 
Vlasov--Poisson,
Plasma
\end{keyword}

\end{frontmatter}

%
%


\section{Introduction}
\label{sec:intro}

In this paper, we design discontinuous Galerkin (DG) methods to solve multiscale kinetic equations of electrostatic plasma, using the framework of micro-macro decomposition.  
In a multispecies plasma, the dynamics of particle species $s$ can be described by a kinetic equation (e.g., \cite{braginskii_1965,chapmanCowling_1970}) of the form 
\begin{equation}
  \p_{t}f_{s} + \bv\cdot\nabla_{\bx}f_{s} + \ba_{s}\cdot\nabla_{\bv}f_{s} = \sum_{s'}\cC(f_{s},f_{s'}),
  \label{eq:boltzmannGeneral}
\end{equation}
where the phase-space distribution function $f_{s}$, depending on velocity $\bv\in\bbR^{3}$ and position $\bx\in\bbR^{3}$, is defined such that, at time $t\in\bbR^{+}$, $f_{s}d\bv d\bx$ gives the number of particles of species $s$ in the phase-space volume element $d\bv d\bx$.  
In Eq.~\eqref{eq:boltzmannGeneral}, $\ba_{s}$ is the acceleration experienced by particles of species $s$ (e.g., due to electromagnetic forces), and the collision term $\cC(f_{s},f_{s'})$ describes collisional interactions between particles of species $s$ and $s'$ (including self collisions).  

Solving Eq.~\eqref{eq:boltzmannGeneral} numerically is challenging for several reasons, including (i) the dimensionality of phase-space, which demands large-scale computational resources \cite{germaschewski_etal_2021}; (ii) the multiple spatial and temporal scales introduced by the individual terms \cite{filbetJin_2010}; (iii) the fact that the kinetic equation gives rise to conservation laws that can be nontrivial to capture in a numerical method \cite{cheng_etal_2013,juno_2018}; and (iv) maintaining positivity of the distribution function \cite{rossmanithSeal_2011}.   
We focus in this paper on the single species case and therefore drop the species subscript $s$ from here on; i.e., $f_{s}\to f$, etc. 

In many applications of kinetic theory --- including fusion and astrophysical plasmas --- the physical system is characterized by regions with varying degrees of collisionality.  
In the \emph{fluid regime}, characterized by frequent interparticle collisions, the distribution function is driven towards a local equilibrium distribution $\cE[\bsrho_{f}]$, depending on a limited number of velocity moments --- $\bsrho_{f}(\bx,t)=\int_{\bbR^{3}}f\be d\bv$, where $\be=(1,\bv,\f{1}{2}|\bv|^{2})^{\rm{T}}$ --- representing the \emph{particle} number, momentum, and energy density of the plasma.  
In such cases fluid models (e.g., the Euler, Navier--Stokes, or magnetohydrodynamics equations) for these moments provide an adequate and much cheaper description.  
In the \emph{kinetic regime}, the particle mean free path exceeds other physical length scales, introducing nonlocal effects, and the kinetic equation must be solved to accurately capture the dynamics.  
Much work has been devoted to the development of asymptotic-preserving numerical methods that work well in both regimes, as well as in the transition between the two.  
An overview of these approaches, along with numerous additional references, can be found in \cite{dimarco2014numerical,jin2010asymptotic}

In the micro-macro (mM) formulation \cite{liuYu_2004,degond_etal_2006,bennoune_etal_2008,crestetto_etal_2012,xiong_etal_2015,gamba_etal_2019}, the distribution function is decomposed as $f=\cE[\bsrho_{f}]+g$, where the microscopic part $g$ is defined such that $\bsrho_{g}=\int_{\bbR^{3}}g\be d\bv=0$.  From Eq.~\eqref{eq:boltzmannGeneral}, an equivalent set of coupled equations can be derived for $\bsrho_{f}$ and $g$.  
One benefit of solving the mM system, as opposed to Eq.~\eqref{eq:boltzmannGeneral} directly for $f$, is that the fluid regime, where $g$ vanishes, is captured exactly by the fluid model evolving the macroscopic part $\bsrho_{f}$.  
It has been demonstrated that numerical methods for the mM decomposition of the neutral-particle BGK equation can capture the Euler and compressible Navier--Stokes limits as the Knudsen number tends to zero \cite{bennoune_etal_2008,xiong_etal_2015}.  
In \cite{gamba_etal_2019}, the application of the mM method was extended to kinetic equations with more complicated collision operators; i.e., the Boltzmann and Fokker--Planck--Landau collision operators.  
The mM method can potentially offer gains in computational efficiency (relative to direct methods) in collisional regimes because fewer degrees of freedom are typically needed to capture the correct dynamics; although phase-space adaptivity \cite{hittingerBanks_2013} or other reduced-memory techniques \cite{einkemmer_etal_2021,guo2016sparse} may be needed to realize the full potential of the mM method.  
As an example, the mM decomposition was used in \cite{crestetto_etal_2012} to solve the Vlason--Poisson--BGK equations, using a particle method for $g$, and a reduction in computational cost was demonstrated in the fluid regime, where fewer particles were needed.  
Another potential benefit of solving the mM system is that conservation laws for particle number, momentum, and energy are evolved directly, as opposed to indirectly when solving Eq.~\eqref{eq:boltzmannGeneral} for $f$, which suggests that these quantities are automatically conserved \cite{gamba_etal_2019} --- independent of the evolution of the microscopic part $g$.  
However, if the numerical method for $g$ violates the evolution constraint $\bsrho_{g}=0$, the velocity moments of $f$ differ from those of $\cE[\bsrho_{f}]$, and the conservation properties of the mM method become ambiguous.  

In this paper, we design numerical methods for the Vlasov--Poisson--Lenard--Bernstein (VPLB) system (see, e.g., \cite{hakim_etal_2020}) in one spatial and one velocity dimension.  
The VPLB system consists of the Vlasov--Poisson (VP) equations supplemented with the Lenard--Bernstein (LB) collision operator \cite{lenardBernstein_1958}, which includes velocity-space drift and diffusion terms.  
We consider two approaches to solving the VPLB system: one based on the mM decomposition (mM method; our primary focus), and one that solves directly for $f$ (direct method; mainly included as a means of comparison).  
For the approach based on the mM decomposition, in order to remove any ambiguity in the conservation properties, we aim to develop methods that maintain the evolution constraint $\bsrho_{g}=0$.  
We also seek to compare the performance of the mM method --- in terms of conservation properties and accuracy for a given phase-space resolution --- against the direct method.  

We use the DG method \cite{cockburnShu_2001} to discretize the kinetic equation in the VPLB system in phase-space.  
The DG method achieves high-order accuracy in phase-space by approximating the solution in each element by a local polynomial expansion of arbitrary degree.  
As a projection-based method, it is attractive for solving kinetic equations, in part because certain conservation properties can be built into the method with an appropriate choice of test functions; see, e.g., \cite{ayuso_etal_2011,cheng_etal_2013,hakim_etal_2019}.  
We use a nodal DG method \cite{hesthavenWarburton_2008} with upwind-type numerical fluxes for the phase-space advection (hyperbolic) parts, while, following \cite{hakim_etal_2020}, we use the recovery DG method of \cite{vanLeerNomura_2005} to approximate the diffusion part of the LB collision operator.  
Similar to \cite{hakim_etal_2020}, we show that the resulting discretization of the LB operator conserves particle number, momentum, and energy, but now with an implicit time discretization.

We use implicit-explicit (IMEX) time-stepping methods \cite{ascher_etal_1997,pareschiRusso_2005} to integrate the VPLB system forward in time.  
To avoid severe time step restrictions for stability with explicit methods in the fluid regime, we use implicit integration of the LB collision operator, while the phase-space advection part is integrated with explicit methods.  
In terms of the numerical methods employed, the work in \cite{xiong_etal_2015}, which used IMEX time-stepping and a DG discretization in space to solve the mM decomposition of the neutral-particle BGK equation, is closely related to the current work.  

We show that the DG-IMEX method for the mM decomposition of the VPLB system preserves the constraints $\bsrho_{g}=0$, provided they are satisfied initially.  
With this property, the conservation of number, momentum, and energy is governed by the macro component, and the conservation properties are as good as those achieved with the direct method.  
The maintenance of $\bsrho_{g}=0$ is achieved by carefully ensuring consistency of the discretization of the micro and macro components.  
(Due to finite velocity domain effects, we introduce a limiter to enforce $\bsrho_{g}=0$ after each explicit step in the time-stepping method.)  
Through numerical experiments, we find that the consistency required to maintain $\bsrho_{g}=0$ is necessary to achieve good results in the fluid regime when using coarse velocity meshes, and thereby better leverage the benefits of the mM method.  
We then demonstrate that the mM method, when compared with the direct method, can improve efficiency in the fluid regime.  

The remainder of this paper is organized as follows.  
In Section~\ref{sec:model} we present the VPLB model and the associated mM decomposition.  
In Section~\ref{sec:dg}, we detail the DG discretization of the direct and mM methods, together with the Poisson solver and IMEX time-stepping methods used in the respective approaches.
In Section~\ref{sec:conservation}, we investigate conservation properties.
In Section~\ref{sec:cleaningLimiter}, we present the cleaning limiter used with the mM method to enforce $\bsrho_{g}=0$.  
In Section~\ref{sec:numerical}, we present numerical results.  Conclusions are provided in Section~\ref{sec:conclusions}.  

\section{Mathematical Model}
\label{sec:model}

\subsection{Vlasov--Poisson--Lenard--Bernstein  Model}

The Vlasov--Poisson--Lenard--Bernstein (VPLB) model describes the evolution of charged particles subject to a self-consistent electrostatic potential and collisional dynamics.  
It consists of a kinetic equation for a phase-space distribution function coupled to a Poisson equation for the potential.  
In one spatial and one velocity dimension (1D1V), the kinetic equation for ions of unit mass and charge takes the form
\begin{equation}
  \p_{t}f(v,x,t) + v\,\p_{x}f(v,x,t) + E(x,t)\,\p_{v}f(v,x,t) = \cC(f)(v,x,t),
  \label{eq:boltzmann}
\end{equation}
where the phase-space distribution $f$ depends on velocity $v\in\bbR$, position $x \in D^x \subset \bbR$, and time $t\in\bbR^{+}$.
The electric field $E=- \p_{x}\Phi$ is the derived from the potential $\Phi$, which satisfies the Poisson equation
\begin{equation}
  - \p_{xx}\Phi(x,t) = (n_{f}(x,t)-n_{\rm{e}}).
  \label{eq:poisson}
\end{equation}
Here $n_{f}=\vint{f}_{\bbR} \equiv \int_{\bbR}f dv$ is the ion density, and $n_{\rm{e}}$ is a constant electron density over the spatial domain $D^x$, which is assumed to be bounded. Throughout the manuscript, we use the notation $\vint{\ldots}_{\Omega}\equiv\int_{\Omega}\ldots dv$ for any $\Omega \subseteq \bbR$.

In the VPLB model, the collision operator $\cC$ on the right-hand side of Eq.~\eqref{eq:boltzmann} is the Lenard--Bernstein (LB) operator \cite{lenardBernstein_1958,hakim_etal_2020}, a  simplified approximation of the more realistic Fokker--Planck--Landau operator (see, e.g. \cite{hazeltine2018framework}).  
The LB operator takes the form
\begin{equation}
  \cC(f) := \cC_{\lb}[\bsrho_{f}](f) = \nu\,\p_{v}\big(\,(v-u_{f})\,f+\theta_{f}\,\p_{v}f\,\big).
  \label{eq:lenardBernstein}
\end{equation}
Here the constant $\nu>0$ is an effective collision frequency;  the moments
\begin{equation}
\label{eq:macroMoments}
\bsrho_{f} 
= \vint{\be f}_{\bbR}, \quad\text{where}\quad\be
=(e_{0},e_{1},e_{2})^{\rm{T}}\equiv(1,v,\frac{1}{2}v^{2})^{\rm{T}},
\end{equation}  
are the number, momentum, and energy densities, respectively; and 
\begin{equation}
u_{f}=\frac{1}{n_f}\vint{fv}_{\bbR }
	\quand
\theta_f = \frac{1}{n_f}\vint{f(v-u_{f})^{2}}_{\bbR}
\end{equation} 
are the bulk velocity and temperature, respectively, associated to $f$.  The map between the fluid variables $(n_f, u_f,\theta_f)$ and $\bsrho_f$ is a simple bijection:
\begin{equation}
\label{eq:momentFluidBijection}
  \bsrho_f = \Big(\,n_f,\, n_f u_f,\, \frac12 n_f (u_f^2 + \theta_f) \,\Big)^{\rm{T}}.
\end{equation} 

The VPLB model, given by Eqs.~\eqref{eq:boltzmann}-\eqref{eq:lenardBernstein}, must be supplemented with suitable initial and boundary conditions, which we leave unspecified for now.  

The following results hold for the VPLB model \cite{hakim_etal_2020}.
\begin{prop} 
	\label{prop:collisionProperties}
	The LB operator satisfies the following properties
\begin{enumerate}
  \item Conservation of number, momentum, and energy:
  \begin{equation}
    \vint{\cC_{\lb}[\bsrho_{w}](w)\,\be}_{\bbR}=0 \quad \forall w \in \operatorname{Dom}( \cC_{\lb})
    \label{eq:lbConservation}
  \end{equation}
  \item Dissipation of entropy:
  \begin{equation}
    \vint{\cC_{\lb}[\bsrho_{w}](w)\log w}_{\bbR}\le0 \quad \forall w \in \operatorname{Dom}( \cC_{\lb})
  \end{equation}
\item Characterization of equilbria (H-theorem):  For any $w \in \operatorname{Dom}(\cC_{\lb})$,
  \begin{equation}
	\vint{\cC_{\lb}[\bsrho_{w}](w\log w}_{\bbR} = 0
\end{equation}
if and only if 
\begin{equation} 
	w= M_w:= \frac{n_{w}}{\sqrt{2\pi\theta_{h}}}\exp\Big\{-\frac{(v-u_{w})^{2}}{2\theta_{w}}\Big\}.
	\label{eq:maxwellian}
\end{equation}
\end{enumerate}
A consequence of Eq.~\eqref{eq:lbConservation} is that 
\begin{equation}
\label{eq:momentModel}
  \p_{t}\bsrho_{f} + \p_{x} \vint{\be v f}_{\bbR} = ET\bsrho_{f},
\end{equation}
where the $3\times3$ matrix $T$ is given by
\begin{equation}
  T=
  \left(\begin{array}{ccc}
    0 & 0 & 0 \\
    1 & 0 & 0 \\
    0 & 1 & 0
  \end{array}\right).
\end{equation}
\end{prop}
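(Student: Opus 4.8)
The plan is to establish the three operator properties by integration by parts in $v$ (exploiting the flux form of $\cC_{\lb}$ and the defining relations for $u_{w},\theta_{w}$), and then to obtain the moment system~\eqref{eq:momentModel} by testing the kinetic equation~\eqref{eq:boltzmann} against $\be$. Throughout I abbreviate $\cC_{\lb}[\bsrho_{w}](w)$ as $\cC_{\lb}(w)$ and write it in flux form $\cC_{\lb}(w)=\nu\,\p_{v}F_{w}$ with velocity-space flux $F_{w}:=(v-u_{w})\,w+\theta_{w}\,\p_{v}w$. I assume that membership in $\operatorname{Dom}(\cC_{\lb})$ supplies enough decay of $w$ and $\p_{v}w$ as $v\to\pm\infty$ that every boundary term arising below vanishes.

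For the conservation property~\eqref{eq:lbConservation}, I would test $\nu\,\p_{v}F_{w}$ against each component of $\be=(e_0,e_1,e_2)^{\rm{T}}$ and integrate by parts. Against $e_{0}=1$ the integral is the boundary value $[\nu F_{w}]_{-\infty}^{\infty}=0$. Against $e_{1}=v$ and $e_{2}=\tfrac12 v^{2}$, integration by parts reduces the claims to $\vint{(v-u_{w})w+\theta_{w}\p_{v}w}_{\bbR}=0$ and $\vint{v\,\big[(v-u_{w})w+\theta_{w}\p_{v}w\big]}_{\bbR}=0$, respectively; the first is immediate from $u_{w}=n_{w}^{-1}\vint{vw}_{\bbR}$, and the second follows, after one further integration by parts, from $\theta_{w}=n_{w}^{-1}\vint{(v-u_{w})^{2}w}_{\bbR}$.

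The key algebraic step, which drives both remaining properties, is that the flux factors as
\begin{equation}
F_{w}=\theta_{w}\,w\,\p_{v}\log\!\big(w/M_{w}\big),
\end{equation}
verified directly from $\p_{v}\log M_{w}=-(v-u_{w})/\theta_{w}$. Moreover, since $\log M_{w}$ is a quadratic polynomial in $v$, it lies in $\operatorname{span}\{e_{0},e_{1},e_{2}\}$, so~\eqref{eq:lbConservation} gives $\vint{\cC_{\lb}(w)\log M_{w}}_{\bbR}=0$ and hence $\vint{\cC_{\lb}(w)\log w}_{\bbR}=\vint{\cC_{\lb}(w)\log(w/M_{w})}_{\bbR}$. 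Integrating by parts once more and substituting the factorization yields
\begin{equation}
\vint{\cC_{\lb}(w)\log w}_{\bbR}=-\nu\,\theta_{w}\,\vint{w\,\big(\p_{v}\log(w/M_{w})\big)^{2}}_{\bbR}\le0,
\end{equation}
since $\nu,\theta_{w}>0$ and $w\ge0$, establishing entropy dissipation. For the H-theorem, equality forces the nonnegative integrand to vanish almost everywhere, so $\p_{v}\log(w/M_{w})=0$ on $\{w>0\}$, i.e. $w=cM_{w}$; matching the zeroth moment (both $w$ and $M_{w}$ integrate to $n_{w}$) then fixes $c=1$, giving $w=M_{w}$, and the converse is trivial.

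Finally, the moment system~\eqref{eq:momentModel} follows by multiplying~\eqref{eq:boltzmann} by $\be$ and integrating in $v$: the time and space derivatives commute with $\vint{\cdot}_{\bbR}$ to give $\p_{t}\bsrho_{f}+\p_{x}\vint{\be v f}_{\bbR}$; the force term integrates by parts to $E\vint{\be\,\p_{v}f}_{\bbR}=-E\vint{(\p_{v}\be)f}_{\bbR}=-E(0,n_{f},n_{f}u_{f})^{\rm{T}}=-ET\bsrho_{f}$, which moved to the right-hand side produces the source $ET\bsrho_{f}$; and the collision term vanishes by~\eqref{eq:lbConservation}. I expect the only genuine obstacle to be the careful justification that all boundary terms in the successive integrations by parts vanish, a point that must be absorbed into the definition of $\operatorname{Dom}(\cC_{\lb})$; conceptually, everything else rests on the single factorization identity together with the observation that $\log M_{w}\in\operatorname{span}\{e_{0},e_{1},e_{2}\}$.
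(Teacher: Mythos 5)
Your proof is correct, but note that the paper itself offers no proof of this proposition: it is stated as a collection of known results with a citation to the Lenard--Bernstein/Hakim et al.\ literature, so there is no internal argument to compare against. Your argument is the standard one for exactly this reason, and all of its steps check out: the conservation identities follow from the flux form of $\cC_{\lb}$ together with the defining relations for $u_{w}$ and $\theta_{w}$; the factorization $F_{w}=\theta_{w}\,w\,\p_{v}\log(w/M_{w})$ is verified by $\p_{v}\log M_{w}=-(v-u_{w})/\theta_{w}$; the observation that $\log M_{w}$ is a quadratic in $v$ (hence in $\operatorname{span}\{e_{0},e_{1},e_{2}\}$) correctly reduces entropy dissipation to a manifestly nonpositive quadratic integral; and the derivation of Eq.~\eqref{eq:momentModel} via $\p_{v}\be=T\be$ matches the structure the paper uses implicitly throughout (e.g.\ in Lemma~\ref{lem:lbBLF_Integrated}). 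Two small technical points you rightly or implicitly defer: the vanishing of boundary terms such as $F_{w}\log(w/M_{w})$ at $v=\pm\infty$ must be built into $\operatorname{Dom}(\cC_{\lb})$, and the step from $\p_{v}\log(w/M_{w})=0$ on $\{w>0\}$ to $w=cM_{w}$ globally needs $\{w>0\}$ connected --- which follows for continuous $w$ since a finite endpoint of a component would force $w$ to vanish where $cM_{w}>0$. Neither affects the validity of the argument.
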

Using Eqs.~\eqref{eq:poisson} and \eqref{eq:momentModel}, it is straightforward to show the following.  
\begin{prop} 
	The VPLB model gives rise to the following local conservation laws:
	\begin{itemize}
		\item Conservation of number:
		\begin{equation}
			\p_{t}\vint{e_{0}f}_{\bbR} + \p_{x}\vint{e_{0}vf}_{\bbR} = 0.
			\label{eq:particleConservation}
		\end{equation}
	\item Conservation of momentum:
	\begin{equation}
		\p_{t}\vint{e_{1}f}_{\bbR} + \p_{x}\big(\,\vint{e_{1}vf}_{\bbR}+n_{\rm{e}}\Phi-\f{1}{2}E^{2}\,\big) = 0.  
		\label{eq:momentumConservation}
	\end{equation}
	\item Conservation of energy:
	\begin{equation}
		\p_{t}\big(\vint{e_{2}f}_{\bbR}+\f{1}{2}E^{2}\big) + \p_{x}\vint{e_{2}vf}_{\bbR} = 0.
		\label{eq:totalEnergyConservation}
	\end{equation}
	\end{itemize}
In particular,  when $E=0$,  Eqs.~\eqref{eq:particleConservation}-\eqref{eq:totalEnergyConservation} reduce to local conservation laws for the moments $\bsrho_f$.
\end{prop}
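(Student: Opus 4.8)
The plan is to start from the moment system~\eqref{eq:momentModel} supplied by Proposition~\ref{prop:collisionProperties}, namely $\p_{t}\bsrho_{f}+\p_{x}\vint{\be v f}_{\bbR}=ET\bsrho_{f}$, and to rewrite each of its three components as a local conservation law by absorbing the field-dependent source $ET\bsrho_{f}$ into either a spatial flux divergence or a time derivative. Writing $\bsrho_{f}=(\vint{e_{0}f}_{\bbR},\vint{e_{1}f}_{\bbR},\vint{e_{2}f}_{\bbR})^{\rm{T}}$ and evaluating $T\bsrho_{f}=(0,\vint{e_{0}f}_{\bbR},\vint{e_{1}f}_{\bbR})^{\rm{T}}$, the three scalar equations carry source terms $0$, $E\,n_{f}$, and $E\vint{e_{1}f}_{\bbR}$, respectively. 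The number equation already has a vanishing source, so Eq.~\eqref{eq:particleConservation} is immediate; the work is in the momentum and energy equations. Throughout I would use the two consequences of the Poisson equation~\eqref{eq:poisson} together with $E=-\p_{x}\Phi$: the field identity $\p_{x}E=n_{f}-n_{\rm{e}}$ and $\p_{x}\Phi=-E$.

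For momentum, I would show that the source $E\,n_{f}$ is an exact spatial derivative. Using $\p_{x}(\f{1}{2}E^{2})=E\,\p_{x}E=E(n_{f}-n_{\rm{e}})$ and $\p_{x}(n_{\rm{e}}\Phi)=n_{\rm{e}}\p_{x}\Phi=-n_{\rm{e}}E$, adding these gives $\p_{x}(\f{1}{2}E^{2}-n_{\rm{e}}\Phi)=E\,n_{f}$. Substituting into the momentum component of~\eqref{eq:momentModel} and moving this flux to the left-hand side produces Eq.~\eqref{eq:momentumConservation}. This step is purely algebraic once the field identity is in hand.

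For energy, the goal is to recognize the source $E\vint{e_{1}f}_{\bbR}=E\vint{vf}_{\bbR}$ as $-\p_{t}(\f{1}{2}E^{2})=-E\,\p_{t}E$, which reduces the claim to the Ampère-type relation $\p_{t}E=-\vint{vf}_{\bbR}$. I would derive this by differentiating the field identity $\p_{x}E=n_{f}-n_{\rm{e}}$ in time and invoking the number conservation law~\eqref{eq:particleConservation}: since $n_{\rm{e}}$ is constant, $\p_{x}\p_{t}E=\p_{t}n_{f}=-\p_{x}\vint{vf}_{\bbR}$, whence $\p_{x}\big(\p_{t}E+\vint{vf}_{\bbR}\big)=0$. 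This is the crux and the main obstacle: differentiating Poisson only determines $\p_{t}E+\vint{vf}_{\bbR}$ up to a spatially constant (but possibly time-dependent) function, reflecting the gauge freedom in reconstructing $E$ from the charge density. The local energy law requires this constant to vanish, i.e.\ that $E$ be advanced consistently with Ampère's law; for the periodic (or decaying) boundary conditions relevant here this follows by fixing the usual zero-mean gauge for $E$ and integrating the relation in $x$. With $\p_{t}E=-\vint{vf}_{\bbR}$ established, substituting $E\vint{vf}_{\bbR}=-\p_{t}(\f{1}{2}E^{2})$ into the energy component of~\eqref{eq:momentModel} yields Eq.~\eqref{eq:totalEnergyConservation}.

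Finally, the $E=0$ reduction is immediate: setting $E=0$ annihilates the source $ET\bsrho_{f}$ and all field-dependent flux and energy terms, so Eqs.~\eqref{eq:particleConservation}--\eqref{eq:totalEnergyConservation} collapse to $\p_{t}\bsrho_{f}+\p_{x}\vint{\be v f}_{\bbR}=0$, a system of local conservation laws for the moments. I expect the only genuine subtlety in the whole argument to be the justification of the Ampère relation in the energy step; everything else is a direct algebraic rearrangement using the Poisson identity and the already-established number equation.
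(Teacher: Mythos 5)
Your proof is correct and takes exactly the route the paper intends: the paper gives no written proof beyond the remark that the proposition follows from Eqs.~\eqref{eq:poisson} and \eqref{eq:momentModel}, and your argument supplies precisely those details --- reading the sources off $ET\bsrho_{f}$, absorbing $E\,n_{f}$ into the flux $n_{\rm{e}}\Phi-\f{1}{2}E^{2}$ via $\p_{x}E=n_{f}-n_{\rm{e}}$, and absorbing $E\vint{vf}_{\bbR}$ into $\p_{t}\big(\f{1}{2}E^{2}\big)$ via the Amp\`ere-type relation, whose integration constant you rightly identify as the only real subtlety. One small correction to your closing step: with periodic boundary conditions the zero-mean property of $E$ only yields $\p_{t}E+\vint{vf}_{\bbR}=\f{1}{|D^{x}|}\int_{D^{x}}\vint{vf}_{\bbR}\,dx$ (the mean current, which is conserved in time but not automatically zero), so the pointwise energy law \eqref{eq:totalEnergyConservation} strictly requires decaying data on $\bbR$ or vanishing net momentum --- a qualification the paper itself glosses over by leaving the boundary conditions unspecified.
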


\subsection{Micro-Macro Decomposition}

In a micro-macro (mM) decomposition \cite{liuYu_2004}, the distribution function is decomposed into an equilibrium (macro) component $\cE[\bsrho_f]$ and non-equilibrium (micro) component $g$; that is,
\begin{equation}
  f = \cE[\bsrho_{f}] + g,
  \label{eq:microMacroDecomposition}
\end{equation}
where $\cE[\bsrho_{f}]:=M_{f}$. Because $\vint{\be \cE[\bsrho_{f}]}_{\bbR}=\bsrho_{f} $, it follows that
\begin{equation}
  \bsrho_{g} = \vint{\be g}_{\bbR} = 0.
  \label{eq:gConstraints}
\end{equation}

Inserting the mM decomposition from Eq.~\eqref{eq:microMacroDecomposition} into Eq.~\eqref{eq:momentModel} gives 
the following equations for $\bsrho_f$:
\begin{equation}
  \p_{t}\bsrho_{f} + \p_{x}\bF(\bsrho_{f}) + \p_{x}\bff(g) = ET\bsrho_{f},
  \label{eq:macroModel}
\end{equation}
where $\bff(g)=\vint{\be v g}_{\bbR}$ and 
\begin{equation}
  \bF(\bsrho_{f}) = \vint{\be v \cE[\bsrho_{f}]}_{\bbR} 
  = 
  \left(\begin{array}{c}
    n_{f} \, u_{f} \\
    n_{f} \, (\, u_{f}^{2} + \theta_{f} \,) \\
    \frac{1}{2} \, n_{f} \, (\, u_{f}^{2} + 3 \theta_{f} \,) \, u_{f}
  \end{array}\right).
  \label{eq:macroFlux}
\end{equation}
From Eq.~\eqref{eq:gConstraints}, it follows that the first two components of $\bff(g)$ vanish.
However, we will still retain these in the numerical method presented in Section~\ref{sec:dg}. 

Inserting the mM decomposition from Eq.~\eqref{eq:microMacroDecomposition} into Eq.~\eqref{eq:boltzmann} gives the following evolution equation for $g$:
\begin{equation}
  \p_{t}g + v\p_{x}g + E\p_{v}g = \cC_{\lb}[\bsrho_{f}](g) 
  - \big\{\,\p_{t}\cE[\bsrho_{f}] + v\p_{x}\cE[\bsrho_{f}] + E\p_{v}\cE[\bsrho_{f}]\,\big\},
  \label{eq:microModel}
\end{equation}
where we have used the fact, taken from item 3 of Proposition \ref{prop:collisionProperties}, that
\begin{equation}
  \cC_{\lb}[\bsrho_{f}](f) = \cC_{\lb}[\bsrho_{f}](\cE[\bsrho_{f}]+g) = \cC_{\lb}[\bsrho_{f}](\cE[\bsrho_{f}]) + \cC_{\lb}[\bsrho_{f}](g) = \cC_{\lb}[\bsrho_{f}](g).  
  \label{eq:lbCollisionOperator_g}
\end{equation}
As noted in \cite{bennoune_etal_2008,crestetto_etal_2012}, the mM model in Eqs.~\eqref{eq:macroModel} and \eqref{eq:microModel}, with the electric field given by Eq.~\eqref{eq:poisson}, is equivalent to the original system in Eqs.~\eqref{eq:boltzmann}-\eqref{eq:lenardBernstein}, provided that the initial and boundary conditions are compatible.

In this paper, we focus on discretizations which maintain Eq.~\eqref{eq:gConstraints} at the numerical level.  
To illustrate the challenge in doing so, consider the moments of Eq.~\eqref{eq:microModel} after $\vint{\be \p_t \cE[\bsrho_f]}_\bbR = \p_t \bsrho_f$ has been eliminated using Eq.~\eqref{eq:macroModel}:
\begin{align}\label{eq:gMomentsChange}
	\p_{t}\vint{\be g}_{\bbR} 
		=&\underbrace{ \{ \p_x  \bF(\bsrho_f) - \vint{\be v \p_x \cE[\bsrho_f]}_\bbR  \} }_{\rm{I}}
		 + \underbrace{\{ \p_x \bff (g) - \vint{\be v\p_x g}_\bbR  \}}_{\rm{II}} \nonumber \\ 
		&-  \underbrace{\{ ET\bsrho_f - \vint{\be E \p_v \cE[\bsrho_f]}_\bbR \}}_{\rm{III}}
		+\underbrace{ \{\vint{\be E \p_v g}_\bbR \}}_{\rm{IV}}
\end{align}
Each of the integrated quantities above comes from the micro model.  
In the continuum, each of two terms in I, II, and III are equal by definition, while the term in IV is identically zero.  
However, at the discrete level, these indentities may no longer hold, unless the discretizations of Eqs.~\eqref{eq:macroModel} and \eqref{eq:microModel} are designed in a cohesive fashion.  

When solving Eq.~\eqref{eq:microModel} numerically, it is common to apply a projection technique before discretization \cite{bennoune_etal_2008,xiong_etal_2015,gamba_etal_2019}, in order to remove terms that do not contribute to the dynamics of $g$.    
However, because the projection introduces non-conservative products, we do not use it here.  
Instead, we solve Eq.~\eqref{eq:microModel} directly, as was done in \cite{degond_etal_2006} for a kinetic model of neutral gases.


\section{Numerical Method}
\label{sec:dg}

In this section we present two different DG discretizations of the VPLB model.  
We begin in Section~\ref{sec:dgDirect} with the direct discretization of Eq.~\eqref{eq:boltzmann}, which we refer to as to as the \emph{direct method}.  In Section~\ref{sec:dgMM}, we focus on the discetization of the mM model in Eqs.~\eqref{eq:macroModel} and \eqref{eq:microModel}, which we refer to as the \emph{micro-Macro (mM) method}.  
While the direct method serves primarily as a reference for comparison, many elements of the discretization are reused in the mM method.

In both methods, the computational domain $D=D^{x}\times D^{v}$, where $D^{x}=[x_{\min},x_{\max}]$ and $D^{v}=[v_{\min},v_{\max}]$, is divided into non-overlapping elements $I_{ij}=I_{i}^{x}\times I_{j}^{v}$ for $i=1,\ldots,N^{x}$ and $j=1,\ldots,N^{v}$, so that $D=\cup_{i,j=1}^{N^{x},N^{v}}I_{ij}$.  
More specifically, given
\begin{alignat}{3}
  x_{\min} &= x_{\f{1}{2}} < \ldots < x_{i-\f{1}{2}} < x_{i+\f{1}{2}} < \ldots < x_{N^{x}+\f{1}{2}}& &= x_{\max} \\
  v_{\min} &=v_{\f{1}{2}} < \ldots < v_{j-\f{1}{2}} < v_{j+\f{1}{2}} < \ldots < v_{N^{v}+\f{1}{2}} & &= v_{\max},
\end{alignat}
we set
\begin{alignat}{3}
  I_{i}^{x}&=(x_{i-\f{1}{2}},x_{i+\f{1}{2}}), \quad
  \dx_{i}&&=x_{i+\f{1}{2}}-x_{i-\f{1}{2}}, \quad
  x_{i}&&=\f{1}{2}(x_{i-\f{1}{2}}+x_{i+\f{1}{2}})
  \\
    I_{j}^{v}&=(v_{j-\f{1}{2}},v_{j+\f{1}{2}}), \quad
  \dv_{j}&&=v_{j+\f{1}{2}}-v_{j-\f{1}{2}}, \quad
  v_{j}&&=\f{1}{2}(v_{j-\f{1}{2}}+v_{j+\f{1}{2}}).
\end{alignat}
On each phase-space element $I_{ij}$, we define the approximation space
\begin{equation}
  \bbV_{h}^{p} = \big\{\,\varphi_{h}\in L^{2}(D) : \varphi_{h} |_{I_{ij}} \in \bbQ^{p}(I_{ij}),\forall i = 1, \ldots, N^{x} ~ \mbox{and} ~ j = 1, \ldots, N^{v} \,\big\},
\end{equation}
where $\bbQ^{p}(I_{ij})$ is the tensor product space of one-dimensional polynomials of maximal degree $p$.  
More specifically, we employ a nodal DG method \cite{hesthavenWarburton_2008}, where the degrees of freedom are defined at the set of Gauss--Legendre (GL) quadrature points within each phase-space element, and the basis functions for the DG approximation are given by Lagrange polynomials associated with these GL points.  
Let $S_{i}^{x}=\{x_{q}\}_{q=1}^{p+1}\subset I_{i}^{x}$ and $S_{j}^{v}=\{v_{q}\}_{q=1}^{p+1}\subset I_{j}^{v}$ denote the set of GL quadrature points on $I_{i}^{x}$ and $I_{j}^{v}$, respectively.
Then, the degrees of freedom on $I_{ij}$ are defined on the point set $S_{ij}=S_{i}^{x}\otimes S_{j}^{v}\subset I_{ij}$.  
On $I_{i}^{x}$, we let $\{\ell_{q}^{i}(x)\}_{q=1}^{p+1}$ denote Lagrange polynomials of degree $p$ constructed from the points $S_{i}^{x}$.  
Similarly, on $I_{j}^{v}$, $\{\ell_{q}^{j}(v)\}_{q=1}^{p+1}$ denote Lagrange polynomials of degree $p$ constructed from the points $S_{j}^{v}$. 
Then, $\{\ell_{q}^{i}(x)\ell_{r}^{j}(v)\}_{q,r=1}^{p+1}\subset \bbQ_{h}^{p}(I_{ij})$.  

\subsection{Direct Method for the VPLB Model}
\label{sec:dgDirect}

In the direct method, we seek a semi-discrete  solution $f_{h}\in C([0,\infty);\bbV_{h}^{p})$ that satisfies
\begin{equation}
  (\p_{t}f_{h},\varphi_{h})_{ij}
  +B_{h}(f_{h},\bsrho_{f_{h}},E_{h},\varphi_{h})_{ij}=0
  \label{eq:boltzmannSemiDiscreteDG}
\end{equation}
for all $\varphi_{h}\in\bbV_{h}^{p}$ and all $I_{ij}\in D$.  
In Eq.~\eqref{eq:boltzmannSemiDiscreteDG}, 
\begin{equation}
  (a,b)_{ij} = \int_{I_{ij}}a\,b\,dx\,dv, \quad \forall a,b\in\bbV_{h}^{p};
\end{equation}
$ \bsrho_{f_{h}} = \vint{\be f_{h}}_{D^{v}}$;
$E_{h}$ is the approximation to the electric field (prescribed in Section \ref{sec:poisson}); and
\begin{equation}
  B_{h}(f_{h},\bsrho_{f_{h}},E_{h},\varphi_{h})_{ij}
  =B_{h}^{\vp}(f_{h},E_{h},\varphi_{h})_{ij} + \nu\,B_{h}^{\lb}(f_{h},\bsrho_{f_{h}},\varphi_{h})_{ij},
  \label{eq:vplbBLF}
\end{equation}
where the `Vlasov form' is given by
\begin{align}
  &B_{h}^{\vp}(f_{h},E_{h},\varphi_{h})_{ij} \label{eq:vpBLF} \\
  &=
  \int_{I_{j}^{v}}
  \big[\,
    \widehat{vf_{h}}(x_{i+\f{1}{2}},v)\varphi_{h}(x_{i+\f{1}{2}}^{-},v) 
    - \widehat{vf_{h}}(x_{i-\f{1}{2}},v)\varphi_{h}(x_{i-\f{1}{2}}^{+},v)
  \,\big]\,dv
  -(vf_{h},\p_{x}\varphi_{h})_{ij} \nonumber \\
  &\hspace{12pt}
  +\int_{I_{i}^{x}}
  \big[\,
    \widehat{E_{h}f_{h}}(x,v_{j+\f{1}{2}})\varphi_{h}(x,v_{j+\f{1}{2}}^{-})
    -\widehat{E_{h}f_{h}}(x,v_{j-\f{1}{2}})\varphi_{h}(x,v_{j-\f{1}{2}}^{+})
  \,\big]\,dx
  -(E_{h}f_{h},\p_{v}\varphi_{h})_{ij} \nonumber
\end{align}
and the `Lenard--Bernstein form' is given by
\begin{align}
  &B_{h}^{\lb}(f_{h},\bsrho_{f_{h}},\varphi_{h})_{ij} \label{eq:lbBLF} \\
  &=\int_{I_{i}^{x}}
  \big[\,
    \widehat{w_{f_{h}}f_{h}}(x,v_{j+\f{1}{2}})\,\varphi_{h}(x,v_{j+\f{1}{2}}^{-})
    -\widehat{w_{f_{h}}f_{h}}(x,v_{j-\f{1}{2}})\,\varphi_{h}(x,v_{j-\f{1}{2}}^{+})
  \,\big]\,dx
  - (w_{f_{h}}f_{h},\p_{v}\varphi_{h})_{ij} \nonumber \\
  &\hspace{12pt}
  -\int_{I_{i}^{x}}\theta_{f_{h}}\,
  \big\{\,
    \big[\,
      \p_{v}\mathfrak{f}_{h}(x,v_{j+\f{1}{2}})\,\varphi_{h}(x,v_{j+\f{1}{2}}^{-})
      -\mathfrak{f}_{h}(x,v_{j+\f{1}{2}})\,\p_{v}\varphi_{h}(x,v_{j+\f{1}{2}}^{-})
    \,\big] \nonumber \\
    &\hspace{32pt}
    -\big[\,
      \p_{v}\mathfrak{f}_{h}(x,v_{j-\f{1}{2}})\,\varphi_{h}(x,v_{j-\f{1}{2}}^{+})
      -\mathfrak{f}_{h}(x,v_{j-\f{1}{2}})\,\p_{v}\varphi_{h}(x,v_{j-\f{1}{2}}^{+})
    \,\big]
  \,\big\}\,dx
  -(\theta_{f_{h}}f_{h},\p_{vv}\varphi_{h})_{ij}.  \nonumber
\end{align}
Following \cite{hakim_etal_2020}, we arrive at Eq.~\eqref{eq:lbBLF} after integrating the diffusive term by parts twice.  
This strategy ensures that derivatives in the volume term (the last term on the right-hand side of Eq.~\eqref{eq:lbBLF}) act only on the test function $\varphi_{h}$ and, as a result, maintains energy conservation.  
(See Section \ref{sec:conservation} below for further details on conservation properties.)

We use the upwind flux to evaluate the numerical fluxes for the Vlasov operator in Eq.~\eqref{eq:vpBLF}.  
Specifically, we let
\begin{align}
  \widehat{vf_{h}}(x,v) 
  &= \hat{v}^{+}(v)\,f_{h}(x^{-},v)+\hat{v}^{-}(v)\,f_{h}(x^{+},v), \label{eq:upwindFluxPosition} \\
  \widehat{E_{h}f_{h}}(x,v) 
  &= \hat{E}_{h}^{+}(x)\,f_{h}(x,v^{-})+\hat{E}_{h}^{-}(x)\,f_{h}(x,v^{+}), \label{eq:upwindFluxVelocity}
\end{align}
where
\begin{equation}
  \hat{v}^{\pm}(v) = \frac{v\pm|v|}{2}
  \quad\text{and}\quad
  \hat{E}_{h}^{\pm}(x) = \frac{E_{h}(x)\pm|E_{h}(x)|}{2}  
  \label{eq:upwindVandE}
\end{equation}
and $x^{\pm}=x\pm\lim_{\delta\to0^{+}}\delta$ and $v^{\pm}=v\pm\lim_{\delta\to0^{+}}\delta$.  
Similarly, we use the upwind flux to evaluate the drift term in the LB operator in Eq.~\eqref{eq:lbBLF}
\begin{equation}
  \widehat{w_{f_{h}}f_{h}}(x,v)
  =\hat{w}_{f_{h}}^{+}(x,v)\,f_{h}(x,v^{-}) + \hat{w}_{f_{h}}^{-}(x,v)\,f_{h}(x,v^{+}),
\end{equation}
where $w_{f_{h}}=u_{f_{h}}-v$ and
\begin{equation}
  \hat{w}_{f_{h}}^{\pm}(x,v)= \frac{w_{f_{h}}(x,v)\pm|w_{f_{h}}(x,v)|}{2}.
  \label{eq:upwindW}
\end{equation}

Following \cite{vanLeerNomura_2005,hakim_etal_2020}, the diffusion term in the LB operator is evaluated with a \emph{recovery polynomial}, denoted by $\mathfrak{f}_{h}$ (i.e., with fraktur font), which is used to compute the distribution and its velocity gradient at an interface.
Let
\begin{equation}
  I_{i[j,j+1]} = I_{ij} \cup I_{ij+1}
  =\big\{\,(x,v) ~ | ~ x \in I_{i}^{x} ~ \text{and} ~ v \in I_{j}^{v} \cup I_{j+1}^{v} \equiv I_{[j,j+1]}^{v}\,\big\}.  
\end{equation}
The recovery polynomial on $I_{i[j,j+1]}$ is then constructed from the tensor product of one dimensional polynomials of maximal degree $p$ and $2p+1$ in the $x$- and $v$-dimensions, respectively.  
We denote this approximation space by
\begin{equation}
  \bbW_{h}^{2p+1}
  =\big\{\,\Psi_{h}\in L^{2}(D) : \Psi_{h}|_{I_{i[j,j+1]}}\in\bbP^{p}(I_{i}^{x})\otimes\bbP^{2p+1}(I_{[j,j+1]}^{v})\,\big\}, 
\end{equation}
where $\bbP^{q}$ is the space of one-dimensional polynomials of maximal degree $q$.  
On $I_{i[j,j+1]}$, $\mathfrak{f}_{h}\in C([0,\infty);\bbW_{h}^{2p+1})$ is  obtained from the DG solution $f_{h}$ by requiring that \cite{vanLeerNomura_2005}
\begin{equation}
  \int_{I_{ij}}f_{h}\,\varphi_{h}\,dz = \int_{I_{ij}}\mathfrak{f}_{h}\,\varphi_{h}\,dz
  \quad\text{and}\quad
  \int_{I_{ij+1}}f_{h}\,\varphi_{h}\,dz =  \int_{I_{ij+1}}\mathfrak{f}_{h}\,\varphi_{h}\,dz
  \label{eq:recoveryPolynomialEquivalence}
\end{equation}
for all $\varphi_{h}\in\bbV_{h}^{p}$.  
Then $\mathfrak{f}_{h}$ and $\p_{v}\mathfrak{f}_{h}$ are continous at the interface between $I_{j}^{v}$ and $I_{j+1}^{v}$.  

In this paper, when evaluating the Vlasov form in Eq.~\eqref{eq:vpBLF}, we impose the following zero flux conditions at the boundaries of the velocity domain (e.g., \cite{ayuso_etal_2011})
\begin{equation}
  \widehat{E_{h}f_{h}}\big|_{v=v_{\min}}=\widehat{E_{h}f_{h}}\big|_{v=v_{\max}}=0.  
  \label{eq:vpBoundaryCondition}
\end{equation}
When evaluating the LB form in Eq.~\eqref{eq:lbBLF}, we impose the following conditions at the boundaries of the velocity domain
\begin{equation}
  \big[\,\widehat{w_{f_{h}}f_{h}} - \theta_{f_{h}}\,\p_{v}\mathfrak{f}_{h}\,\big] \big|_{v=v_{\min}}
  =\big[\,\widehat{w_{f_{h}}f_{h}} - \theta_{f_{h}}\,\p_{v}\mathfrak{f}_{h}\,\big] \big|_{v=v_{\max}}=0
  \label{eq:lbBoundaryCondition_1}
\end{equation}
and
\begin{equation}
  \mathfrak{f}_{h}\big|_{v=v_{\min}} = \mathfrak{f}_{h}\big|_{v=v_{\max}} = 0.
  \label{eq:lbBoundaryCondition_2}
\end{equation}

\begin{rem}
  For simplicity, we will use a linear finite element method to solve Eq.~\eqref{eq:poisson} for the approximation to the electrostatic potential $\Phi_{h}$, and the electric field is obtained by direct differentiation $E_{h}=-\p_{x}\Phi_{h}$ (see Section~\ref{sec:poisson}).  
  Hence, $E_{h}$ in Eq.~\eqref{eq:boltzmannSemiDiscreteDG} is approximated as a constant within each spatial element.  
  \label{rem:constantElectricField}
\end{rem}

\subsection{DG Method for the Micro-Macro decomposition of the VPLB Model}
\label{sec:dgMM}

In this subsection we specify the DG discretization for the mM method.  
We start with the macro component, given by Eq.~\eqref{eq:macroModel} in Section~\ref{sec:dgMacro}, and then provide the DG discretization for the micro component, given by Eq.~\eqref{eq:microModel}, in Section~\ref{sec:dgMicro}.  
(Technically, the discretization of the two components go together because of coupling terms, but in this paper, as a matter of organization, we separate the specification of each component.)  

\subsubsection{DG Method for the Macro Component}
\label{sec:dgMacro}

To discretize Eq.~\eqref{eq:macroModel} in space with the DG method, we let the approximation space on the spatial element $I_{i}^{x}$ be denoted 
\begin{equation}
  \bbV_{h}^{x,p}=\big\{\,\varphi_{h}\in L^{2}(D^{x}) : \varphi_{h}|_{I_{i}^{x}}\in\bbP^{p}(I_{i}^{x}),\forall i = 1,\ldots,N^{x}\,\big\},
\end{equation}
where $\bbP^{p}(I_{i}^{x})$ is the space of piecewise polynomials on $I_{i}^{x}$ of maximal degree $p$.  
The semi-discrete DG problem is then to find $\bsrho_{f,h}\in C([0,\infty);[\bbV_{h}^{x,p}]^{3})$ such that
\begin{equation}
  (\p_{t}\bsrho_{f,h},\varphi_{h})_{i} + B_{h}^{\macro}(\bsrho_{f,h},g_{h},E_{h},\varphi_{h})_{i} = 0
  \label{eq:macroSemiDiscrete}
\end{equation}
holds for all $\varphi_{h}\in\bbV_{h}^{x,p}$, all $I_{i}^{x}\in D^{x}$, and where $E_{h}\in\bbV_{h}^{x,0}$ (see Remark~\ref{rem:constantElectricField}), and $g_{h}\in\bbV_{h}^{p}$.  
(Here it is understood that $g_{h}$ is obtained by solving Eq.~\eqref{eq:microSemiDiscrete}, given below.)  
The `macro form' $B_{h}^{\macro}$ in Eq.~\eqref{eq:macroSemiDiscrete} is given by
\begin{align}
  &B_{h}^{\macro}(\bsrho_{f,h},g_{h},E_{h},\varphi_{h})_{i} \nonumber \\
  &=
  \big[\,
    \widehat{\bF(\bsrho_{f,h})}(x_{i+\f{1}{2}})\,\varphi_{h}(x_{i+\f{1}{2}}^{-})
    -\widehat{\bF(\bsrho_{f,h})}(x_{i-\f{1}{2}})\,\varphi_{h}(x_{i-\f{1}{2}}^{+})
  \,\big]
  -(\bF(\bsrho_{f,h}),\p_{x}\varphi_{h})_{i} \nonumber \\
  &\hspace{12pt}
  +\big[\,
    \widehat{\bff(g_{h})}(x_{i+\f{1}{2}})\,\varphi_{h}(x_{i+\f{1}{2}}^{-})
    -\widehat{\bff(g_{h})}(x_{i-\f{1}{2}})\,\varphi_{h}(x_{i-\f{1}{2}}^{+})
  \,\big]
  -(\bff(g_{h}),\p_{x}\varphi_{h})_{i} \nonumber \\
  &\hspace{12pt}
  -(E_{h}T\bsrho_{f,h},\varphi_{h})_{i},
  \label{eq:macroBLF}
\end{align}
where
\begin{equation}
  (a,b)_{i} = \int_{I_{i}^{x}}a\,b\,dx,  \quad \forall a,b\in\bbV_{h}^{x,p}.  
\end{equation}

We note a slight abuse of notation in Eq.~\eqref{eq:macroSemiDiscrete}, where it is understood that the semi-discrete DG problem holds independently for each component of $\bsrho_{f,h}$.  

In Eq.~\eqref{eq:macroBLF}, the numerical flux $\widehat{\bF(\bsrho_{f,h})}$ is computed via upwinding at the kinetic level; that is,
\begin{align}
  \widehat{\bF(\bsrho_{f,h})}(x)
  &=\f{1}{2}\big[\,\bF\big(\bsrho_{f,h}(x^{-})\big)+\bF\big(\bsrho_{f,h}(x^{+})\big)\,\big] \nonumber \\
  &\hspace{12pt}
  -\f{1}{2}\int_{\bbR}\be\,|v|\,\big(\,\cE[\bsrho_{f,h}(x^{+})]-\cE[\bsrho_{f,h}(x^{-})]\,\big)\,dv, 
  \label{eq:numercalFluxMacroMacroA}
\end{align}
and the integrals in the dissipation term (the second line on the right-hand side of Eq.~\eqref{eq:numercalFluxMacroMacroA}) are evaluated analytically:
\begin{align}
    &\int_{\bbR}e_{0}\,|v|\,\cE[\bsrho_{f,h}(x)]\,dv 
    =\f{n_{f,h}}{\sqrt{2\pi\theta_{f,h}}}\,\Big\{\,2\,\theta_{f,h}\,e^{-\f{u_{f,h}^{2}}{2\theta_{f,h}}}+u_{f,h}\,\sqrt{2\pi\theta_{f,h}}\,\mbox{erf}\Big(\f{u_{f,h}}{\sqrt{2\theta_{f,h}}}\Big)\,\Big\}, \\
    &\int_{\bbR}e_{1}\,|v|\,\cE[\bsrho_{f,h}(x)]\,dv \nonumber \\
  &\quad=\f{n_{f,h}}{\sqrt{2\pi\theta_{f,h}}}\,\Big\{\,2\,u_{f,h}\,\theta_{f,h}\,e^{-\f{u_{f,h}^{2}}{2\theta_{f,h}}}+\big[\,u_{f,h}^{2}+\theta_{f,h}\,\big]\,\sqrt{2\pi\theta_{f,h}}\,\mbox{erf}\Big(\f{u_{f,h}}{\sqrt{2\theta_{f,h}}}\Big)\,\Big\}, \\
    &\int_{\bbR}e_{2}\,|v|\,\cE[\bsrho_{f,h}(x)]\,dv \nonumber \\
    &\quad =\f{n_{f,h}}{\sqrt{2\pi\theta_{f,h}}}\,\Big\{\,2\,\theta_{f,h}\,\big(\theta_{f,h}+\f{1}{2}\,u_{f,h}^{2}\big)\,e^{-\f{u_{f,h}^{2}}{2\theta_{f,h}}}+\f{1}{2}\big[\,u_{f,h}^{2}+3\theta_{f,h}\,\big]\,u_{f,h}\,\sqrt{2\pi\theta_{f,h}}\,\mbox{erf}\Big(\f{u_{f,h}}{\sqrt{2\theta_{f,h}}}\Big)\,\Big\}
    \end{align}
where $n_{f,h}$, $u_{f,h}$, and $\theta_{f,h}$ are obtained from $\bsrho_{f,h}(x)$ via the bijection defined in Eq.~\eqref{eq:momentFluidBijection}. 
Similarly, the numerical flux $\widehat{\bff(g_{h})}$ is given by (note that $g_{h}$ has compact support on $D^{v}$)
\begin{align}
  \widehat{\bff(g_{h})}(x) 
  = \int_{\bbR} \be\,\widehat{vg_{h}}(x,v)\,dv
  &= \int_{D^{v}} \be\,\widehat{vg_{h}}(x,v)\,dv \nonumber \\
  &= \int_{D^{v}} \be\,\hat{v}^{+}\,g_{h}(x^{-},v)\,dv + \int_{D^{v}} \be\,\hat{v}^{-}\,g_{h}(x^{+},v)\,dv,
  \label{eq:numercalFluxMacroMicro}
\end{align}
where $\hat{v}^{\pm}$ is defined as in Eq.~\eqref{eq:upwindVandE}.  
These integrals can be evaluated exactly with GL quadrature, as long as $v=0$ coincides with and interface (so that $\hat{v}^{\pm}$ is polynomial in each element).  

\subsubsection{DG Method for the Micro Component}
\label{sec:dgMicro}

The semi-discrete DG problem for the micro component can be formulated as follows: find $g_{h}\in C([0,\infty);\bbV_{h}^{p})$ such that
\begin{equation}
  (\p_{t}g_{h},\varphi_{h})_{ij} + B_{h}(g_{h},\bsrho_{f,h},E_{h},\varphi_{h}) + (\p_{t}\cE[\bsrho_{f,h}],\varphi_{h})_{ij} + B_{h}^{\micro}(\bsrho_{f,h},E_{h},\varphi_{h})_{ij} = 0
  \label{eq:microSemiDiscrete}
\end{equation}
for all $\varphi_{h}\in\bbV_{h}^{p}$, $I_{ij}\in D$, $E_{h}\in\bbV_{h}^{x,0}$, and $\bsrho_{f,h}\in\bbV_{h}^{x,p}$ (obtained from Eq.~\eqref{eq:macroSemiDiscrete}),
where $B_{h}$ is defined in Eq.~\eqref{eq:vplbBLF} and
\begin{align}
  &B_{h}^{\micro}(\bsrho_{f,h},E_{h},\varphi_{h})_{ij} \nonumber \\
  &=\int_{I_{j}^{v}}
  \big[\,
    \widehat{v\cE}[\bsrho_{f,h}](x_{i+\f{1}{2}},v)\varphi_{h}(x_{i+\f{1}{2}}^{-},v)
    -\widehat{v\cE}[\bsrho_{f,h}](x_{i-\f{1}{2}},v)\varphi_{h}(x_{i-\f{1}{2}}^{+},v)
  \,\big] dv \nonumber \\
  &\hspace{32pt}
  - (v\cE[\bsrho_{f,h}],\p_{x}\varphi_{h})_{ij} \nonumber \\
  &\hspace{12pt}
  +\int_{I_{i}^{x}}
  \big[\,
    \widehat{E_{h}\cE}[\bsrho_{f,h}](x,v_{j+\f{1}{2}})\varphi_{h}(x,v_{j+\f{1}{2}}^{-})
    -\widehat{E_{h}\cE}[\bsrho_{f,h}](x,v_{j-\f{1}{2}})\varphi_{h}(x,v_{j-\f{1}{2}}^{+})
  \,\big]\,dx \nonumber \\
  &\hspace{32pt}
  - (E_{h}\cE[\bsrho_{f,h}],\p_{v}\varphi_{h})_{ij}.  
  \label{eq:microBLF}
\end{align}
The numerical flux $\widehat{v\cE}[\bsrho_{f,h}]$ is prescribed using upwinding at the kinetic level:
\begin{align}
  \widehat{v\cE}[\bsrho_{f,h}](x,v)
  &=\hat{v}^{+}(v)\,\cE[\bsrho_{f,h}(x^{-})](v)+\hat{v}^{+}(v)\,\cE[\bsrho_{f,h}(x^{+})](v).  
  \label{eq:numericalFluxMicroMacroA}
\end{align}
Since the equilibrium distribution is continuous in velocity, the numerical flux $\widehat{E_{h}\cE}[\bsrho_{f,h}]$ is simply evaluated as
\begin{equation}
  \widehat{E_{h}\cE}[\bsrho_{f,h}](x,v) = E_{h}(x)\cE[\bsrho_{f,h}(x)](v).  
\end{equation}

In Eq.~\eqref{eq:microBLF}, we impose the following conditions at the boundary of the velocity domain
\begin{equation}
  \widehat{E_{h}\cE}[\bsrho_{f,h}]\big|_{v=v_{\min}} = \widehat{E_{h}\cE}[\bsrho_{f,h}]\big|_{v=v_{\max}} = 0.  
  \label{eq:velocityBoundaryConditionMaxwellian}
\end{equation}
Due to the exponential decay of $\cE[\bsrho_{f,h}]$ in velocity, these conditions are reasonable, provided the velocity domain is large enough.  

For the Vlasov and LB forms evaluated with the micro distribution $g_{h}$, represented by the second term on the left-hand side of Eq.~\eqref{eq:microSemiDiscrete}, we impose the conditions in Eq.~\eqref{eq:vpBoundaryCondition} and Eqs.~\eqref{eq:lbBoundaryCondition_1}-\eqref{eq:lbBoundaryCondition_2}, respectively, with $f_{h}$ and $\mathfrak{f}_{h}$ replaced by $g_{h}$ and $\mathfrak{g}_{h}$.  

\begin{rem}
 The subtle notational difference between $\bsrho_{f_{h}}$ and $\bsrho_{f,h}$ is important.  Both terms approximate the same quantity, but the former is computed by taking velocity moments of $f_{h}$ in the direct method, while  the latter is evolved  by  Eq.~\eqref{eq:macroSemiDiscrete} in the mM method.
\end{rem}

\begin{rem}
  To arrive at a numerical analogue of Eq.~\eqref{eq:gMomentsChange}, and achieve the equivalent of the vanishing of terms {\rm I} and {\rm III} to preserve the constraints in Eq.~\eqref{eq:gConstraints}, we use exact evaluation of velocity integrals in Eq.~\eqref{eq:microSemiDiscrete} --- importantly those involving Maxwellians --- to properly balance terms that emanate from Eq.~\eqref{eq:macroSemiDiscrete}.  
  (Spatial integrals involving Maxwellians in Eq.~\eqref{eq:microSemiDiscrete} are approximated with $(p+1)$-point GL quadratures.)  
  In Section~\ref{sec:conservation}, we discuss the importance of this balance for maintaining conservation laws.  
  In Section~\ref{sec:numerical}, we demonstrate numerical artifacts that can arise when instead using standard quadrature formulas.
\end{rem}


\subsection{Poisson Solver}
\label{sec:poisson}

We use a standard finite element method (FEM) to solve Eq.~\eqref{eq:poisson}.  
(Specifically, as noted in Remark~\ref{rem:constantElectricField}, mainly for simplicity, we use the linear FEM.)  
To this end, we let $V_{h}$ denote space of functions constructed from basis functions of the form
\begin{equation}
  \psi_{i}(x)
  =\left\{
  \begin{array}{cc}
  \f{(x-x_{i-\f{1}{2}})}{\dx_{i}} & x \in I_{i}^{x}, \\
  \f{(x_{i+\f{3}{2}}-x)}{\dx_{i+1}} & x \in I_{i+1}^{x}, \\
  0 & \mbox{otherwise},
  \end{array}
  \right.
  \quad
  i=1,\ldots,N^{x}-1.
  \label{eq:spaceFEM}
\end{equation}
For the mM method, let $S(\bsrho_{f,h})=n_{f,h}-n_{\rm{e}}$.  
We seek $\Phi_{h}\in V_{h}$ such that
\begin{equation}
  \int_{D^{x}}(\p_{x}\Phi_{h})\,(\p_{x}\psi_{h})\,dx
  =\int_{D^{x}}S(\bsrho_{f,h})\,\psi_{h}\,dx
  \label{eq:poissonFEM}
\end{equation}
holds for all $\psi_{h}\in V_{h}$.  
The electric field is then given by
\begin{equation}
  E_{h}|_{I_{i}^{x}}=-(\p_{x}\Phi_{h})|_{I_{i}^{x}} = - \Big(\f {\Phi_h(x_{i+1/2})-\Phi_h(x_{i-1/2})} {\dx_{i}}\Big)\in\bbV_{h}^{x,0}.  
  \label{eq:electricField}
\end{equation}

When solving the kinetic equation with the direct method, the source $S(\bsrho_{f,h})$ in Eq.~\eqref{eq:poissonFEM} is replaced with $S(\bsrho_{f_{h}})=n_{f_{h}}-n_{\rm{e}}$.  

\subsection{Time Integration}
\label{sec:timeIntegration}

Both the direct and mM methods yield systems of ordinary differential equations (ODEs) that must be solved numerically with a time-stepping method. 
We use explicit strong stability-preserving Runge--Kutta (SSP-RK) methods \cite{shuOsher_1988,gottlieb_etal_2001} for problems without collisions ($\nu=0$) and implicit-explicit Runge--Kutta (IMEX-RK) methods \cite{ascher_etal_1997,pareschiRusso_2005} for problems with collisions ($\nu>0$).  
In the latter case, the collision operator is evaluated implicitly to avoid severe stability restrictions on the time step when the collision frequency is large.  

%

\subsubsection{IMEX-RK Time Integration for the Direct Method}
\label{sec:imexrkDirect}

The general $s$-stage IMEX-RK method to evolve the VPLB system with the direct method in Section~\ref{sec:dgDirect} from $t^{k}$ to $t^{k+1}=t^{k}+\dt$, where $\dt$ is the time step, can be written as \cite{pareschiRusso_2005}: for $i=1,\ldots,N^{x}$, $j=1,\ldots,N^{v}$, and all $\varphi_{h}\in\bbV_{h}^{p}$
\begin{itemize}
  \item[1.] for $l=1,\ldots,s$ compute
  \begin{equation}
    (f_{h}^{(l)},\varphi_{h})_{ij} 
    =(f_{h}^{(l\star)},\varphi_{h})_{ij}
    - a_{ll}\dt\nu B_{h}^{\lb}(f_{h}^{(l)},\bsrho_{f_{h}}^{(l)},\varphi_{h})_{ij},
    \label{eq:imexImplicit}
  \end{equation}
  where
  \begin{align}
    &(f_{h}^{(l\star)},\varphi_{h})_{ij}
    =(f_{h}^{k},\varphi_{h})_{ij} \nonumber \\
    &\hspace{12pt}
    - \dt\sum_{m=1}^{l-1}
    \Big(
      \tilde{a}_{lm}B_{h}^{\vp}(f_{h}^{(m)},E_{h}^{(m)},\varphi_{h})_{ij} 
      +a_{lm}\nu B_{h}^{\lb}(f_{h}^{(m)},\bsrho_{f_{h}}^{(m)},\varphi_{h})_{ij}
    \Big),
    \label{eq:imexExplicit}
  \end{align}
  and solve Eq.~\eqref{eq:poissonFEM} to obtain $E_{h}^{(l)}|_{I_{i}^{x}}$ from $(f_{h}^{(l)},\varphi_{h})_{ij}$.
  \item[2.] Assemble
  \begin{align}
    &(f_{h}^{k+1},\varphi_{h})_{ij}
    =(f_{h}^{k},\varphi_{h})_{ij} \nonumber \\
    &\hspace{12pt}
    - \dt\sum_{m=1}^{s}
    \Big(
      \tilde{w}_{l}B_{h}^{\vp}(f_{h}^{(l)},E_{h}^{(l)},\varphi_{h})_{ij} 
      +w_{l}\nu B_{h}^{\lb}(f_{h}^{(l)},\bsrho_{f_{h}}^{(l)},\varphi_{h})_{ij}
    \Big),
    \label{eq:imexAssembly}
  \end{align}
  and solve Eq.~\eqref{eq:poissonFEM} to obtain $E_{h}^{k+1}|_{I_{i}^{x}}$ from $(f_{h}^{k+1},\varphi_{h})_{ij}$.
\end{itemize}
Here, the coefficients $\tilde{a}_{lm},a_{lm}$ are components of matrices $\tilde{A},A\in\bbR^{s\times s}$, while the coefficients $\tilde{w}_{l},w_{l}$ are components of vectors $\tilde{\bw},\bw\in\bbR^{s}$.  
IMEX-RK schemes are commonly represented by a double tableau of the form
\begin{equation}
  \begin{array}{c | c}
    \tilde{\bc} & \tilde{A} \\ \hline
    & \tilde{\bw}
  \end{array}
  \qquad
  \begin{array}{c | c}
    \bc & A \\ \hline
    & \bw
  \end{array},
\end{equation}
where the coefficients $\tilde{\bc}$ and $\bc$ are used for non-autonomous systems.  
Here we only consider diagonally implicit IMEX schemes, where $\tilde{a}_{lm}=0$ for $m\ge l$ and $a_{lm}=0$ for $m>l$ (obvious from the upper limit of the sum in Eq.~\eqref{eq:imexExplicit}).  
Each implicit solve from Eq.~\eqref{eq:imexImplicit} can be considered as a backward Euler update with initial state $f_{h}^{(l\star)}$ and step size $a_{ll}\dt$.  
For the collisionless case ($\nu=0$), we set the implicit coefficients, $a_{lm}$ and $w_{l}$, to zero, while we set the explicit coefficients, $\tilde{a}_{lm}$ and $\tilde{w}_{l}$, appropriately to obtain either the optimal second- or third-order accurate SSP-RK methods of \cite{shuOsher_1988} (henceforth referred to as SSP-RK2 and SSP-RK3, respectively).  
Finally, we note a special class of IMEX-RK schemes, called globally stiffly accurate (GSA) schemes, where $a_{sm}=w_{m}$ and $\tilde{a}_{sm}=\tilde{w}_{m}$, for $m=1,\ldots,s$.  
For GSA IMEX-RK schemes, which we use exclusively for problems involving collisions, the assembly step in Eq.~\eqref{eq:imexAssembly} can be dropped, and we have $f_{h}^{n+1}=f_{h}^{(s)}$.  

\begin{rem}
  The use of unknown moments $\bsrho_{f_{h}}^{(l)}$ in the LB operator in the implicit step in Eq.~\eqref{eq:imexImplicit} appears to imply that a nonlinear solve is needed.  
  However, because the conservation properties of the LB operator hold at the discrete level (i.e., $\bsrho_{f_{h}}^{(l)}=\bsrho_{f_{h}}^{(l\star)}$), the known moments $\bsrho_{f_{h}}^{(l\star)}$ can be used in place of $\bsrho_{f_{h}}^{(l)}$.  
  See Section~\ref{sec:conservation} for more details.  
\end{rem}



\subsubsection{IMEX-RK Time Integration for the mM Method}
\label{imexrkMM}

For the mM decomposition of the VPLB system, following the previous section, we employ an $s$-stage GSA IMEX-RK method, which can be formulated as: for $i=1,\ldots,N^{x}$, $j=1,\ldots,N^{v}$, all $\varphi_{h}\in\bbV_{h}^{p}$, and all $\psi_{h}\in\bbV_{h}^{x,p}$
\begin{itemize}
  \item[1.] For $l=1,\ldots,s$ compute
  \begin{align}
    (\bsrho_{f,h}^{(l)},\psi_{h})_{i}
    &= (\bsrho_{f,h}^{k},\psi_{h})_{i} - \dt\sum_{m=1}^{l-1}\tilde{a}_{lm}B_{h}^{\macro}(\bsrho_{f,h}^{(m)},g_{h}^{(m)},E_{h}^{(m)},\psi_{h})_{i}, \label{eq:imexMacroExplicit}\\
    (g_{h}^{(l)},\varphi_{h})_{ij}
    &=\Lambda\big\{(g_{h}^{(l\star)},\varphi_{h})_{ij}\big\} - a_{ll} \dt \nu B_{h}^{\lb}(g_{h}^{(l)},\bsrho_{f,h}^{(l)},\varphi_{h})_{ij}, \label{eq:imexMicroImplicit}
  \end{align}
  where
  \begin{align}
    &(g_{h}^{(l\star)},\varphi_{h})_{ij}
    =(g_{h}^{k},\varphi_{h})_{ij} + \big[\,(\cE[\bsrho_{f,h}^{k}],\varphi_{h})_{ij}-(\cE[\bsrho_{f,h}^{(l)}],\varphi_{h})_{ij}\,\big] \nonumber \\
    &\hspace{0pt}
    -\dt\sum_{m=1}^{l-1}
    \Big(\,
      \tilde{a}_{lm}B_{h}^{\vp}(g_{h}^{(m)},E_{h}^{(m)},\varphi_{h})_{ij}
      +a_{lm}\nu B_{h}^{\lb}(g_{h}^{(m)},\bsrho_{f,h}^{(m)},\varphi_{h})_{ij}
    \,\Big) \nonumber \\
    &\hspace{0pt}
    -\dt\sum_{m=1}^{l-1}\tilde{a}_{lm}B_{h}^{\micro}(\bsrho_{f,h}^{(m)},E_{h}^{(m)},\varphi_{h})_{ij},
    \label{eq:imexMicroExplicit}
  \end{align}
  and solve Eq.~\eqref{eq:poissonFEM} to obtain $E_{h}^{(l)}|_{I_{i}^{x}}$ from $\bsrho_{f,h}^{(l)}$.
  \item[2.] Set $(\bsrho_{h}^{k+1},\psi_{h})_{i}=(\bsrho_{h}^{(s)},\psi_{h})_{i}$, $(g_{h}^{k+1},\varphi_{h})_{ij}=(g_{h}^{(s)},\varphi_{h})_{ij}$, and $E_{h}^{k+1}=E_{h}^{(s)}|_{I_{i}^{x}}$.
\end{itemize}

In Eq.~\eqref{eq:imexMicroImplicit}, $\Lambda\big\{\big\}$ represents a `cleaning limiter', which, if needed, is applied to $g_{h}^{(l\star)}$ after the explicit push in Eq.~\eqref{eq:imexMicroExplicit} to enforce the orthogonality constraints in Eq.~\eqref{eq:gConstraints}.  
We discuss this limiter further in Section~\ref{sec:cleaningLimiter}.  

\section{Conservation Properties}
\label{sec:conservation}

We consider conservation properties of the discretized VPLB system in this section.  
For simplicity, we consider a first-order accurate time integration scheme consisting of a sequence of forward and backward Euler steps, which can be written in the standard IMEX-RK form with Butcher tables (scheme ARS111 from \cite{ascher_etal_1997})
\begin{equation}
  \begin{array}{c | c c}
  	\tilde{c}_{1} & \tilde{a}_{11} & \tilde{a}_{12}  \\
  	\tilde{c}_{2} & \tilde{a}_{21} & \tilde{a}_{22}  \\ \hline
  	                   & \tilde{w}_{1} & \tilde{w}_{2}
  \end{array}
  =
  \begin{array}{c | c c}
  	0 & 0 & 0  \\
  	1 & 1 & 0  \\ \hline
  	   & 1 & 0
  \end{array}
  \qquad
    \begin{array}{c | c c}
  	c_{1} & a_{11} & a_{12}  \\
  	c_{2} & a_{21} & a_{22}  \\ \hline
  	         & w_{1} & w_{2}
  \end{array}
  =
  \begin{array}{c | c c}
  	0 & 0 & 0          \\
  	1 & 0 & 1  \\ \hline
  	   & 0 & 1
  \end{array}.
  \label{eq:butcherARS111}
\end{equation}
(The extension of the results in this section to the more general IMEX schemes in Sections~\ref{sec:imexrkDirect} and \ref{imexrkMM} is relatively straightforward, but notationally tedious.)  
We demand that the polynomial degree used in the DG method is at least $2$, so that all the components of $\be$ can be represented exactly by the approximation space.  
For completeness, we consider the general case with $E\ne0$.  
However, the presence of the electric field introduces momentum and total energy conservation violations from the explicit step, and we only achieve exact momentum and energy conservation for the case with $E=0$.  
We will demonstrate conservation properties numerically in Section~\ref{sec:numerical}, where we also compare total energy conservation properties of the direct and mM methods.  

\subsection{Conservation Properties of the Direct Method for the VPLB System}
\label{sec:conservationDirect}

With the Butcher tables in Eq.~\eqref{eq:butcherARS111}, the IMEX-RK scheme in Eqs.~\eqref{eq:imexImplicit}-\eqref{eq:imexAssembly} becomes
\begin{align}
  (f_{h}^{\star},\varphi_{h})_{ij}
  &= (f_{h}^{k},\varphi_{h})_{ij} - \dt B_{h}^{\vp}(f_{h}^{k},E_{h}^{k},\varphi_{h})_{ij}, \label{eq:directExplicitArs111} \\
  (f_{h}^{k+1},\varphi_{h})_{ij}
  &=(f_{h}^{\star},\varphi_{h})_{ij} - \dt\nu B_{h}^{\lb}(f_{h}^{k+1},\bsrho_{f_{h}}^{k+1},\varphi_{h})_{ij}. \label{eq:directImplicitArs111}
\end{align}
This IMEX-RK scheme is GSA, so the assembly step in Eq.~\eqref{eq:imexAssembly} is not needed.  
Conservation properties of the system in Eqs.~\eqref{eq:directExplicitArs111}-\eqref{eq:directImplicitArs111} was considered in \cite{hakim_etal_2020} for the case with explicit time-stepping (see also \cite{hakim_etal_2019} for the collisionless case).  
Here, we consider the case where the LB operator is integrated with implicit time-stepping.  
Define the local and global moments, given by
\begin{equation}
(\bsrho_{f_{h}})_{i} = \sum_{j=1}^{N^{v}}(f_{h},\be)_{ij} = \int_{I_{i}^{x}}\vint{f_{h}\be}_{D^{v}}\,dx
\label{eq:conservedMoments_f}
\quand
\bM_{f_{h}} = \sum_{i=1}^{N^{x}}(\bsrho_{f_{h}})_{i},
\end{equation}
respectively.

We first consider the explicit step in Eq.~\eqref{eq:directExplicitArs111}:
\begin{prop}
  Suppose the DG approximation space used in the direct method consists of piecewise polynomials of degree at least two ($p\ge2$).  
  Then the explicit update given by Eq.~\eqref{eq:directExplicitArs111} conserves particle number, momentum, and energy for the case with vanishing electric field ($E_{h}^{k}=0$).  
  For the case with nonvanishing electric field, the explicit update conserves particle number.  
  \label{prop:explicitConservation_Direct}
\end{prop}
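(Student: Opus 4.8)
The plan is to test the explicit update in Eq.~\eqref{eq:directExplicitArs111} against the polynomial moments $\varphi_{h}=e_{m}$ for $m=0,1,2$, which is permissible precisely because $p\ge2$ guarantees that all three components of $\be$ lie in $\bbV_{h}^{p}$. Summing the resulting identity over all phase-space elements $I_{ij}$ and using the definition of the global moments in Eq.~\eqref{eq:conservedMoments_f}, one obtains the vector relation
\[
  \bM_{f_{h}}^{\star} = \bM_{f_{h}}^{k} - \dt\sum_{i,j}B_{h}^{\vp}(f_{h}^{k},E_{h}^{k},\be)_{ij},
\]
understood componentwise. Conservation of each moment therefore reduces to showing that the indicated sum of Vlasov forms vanishes in the appropriate component under the stated hypotheses.

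First I would dispose of the surface contributions in $B_{h}^{\vp}$. Because each $e_{m}=e_{m}(v)$ is a globally continuous polynomial and the upwind fluxes $\widehat{vf_{h}}$ and $\widehat{E_{h}f_{h}}$ in Eqs.~\eqref{eq:upwindFluxPosition}--\eqref{eq:upwindFluxVelocity} are single-valued at each interior interface, the two contributions sharing an interior face cancel pairwise when the sum over $i$ (for the spatial faces) and over $j$ (for the velocity faces) is carried out. Only boundary faces of $D$ survive: assuming periodic (or otherwise flux-free) boundary conditions in $x$, the spatial boundary terms cancel, while the velocity boundary terms vanish identically by the zero-flux condition Eq.~\eqref{eq:vpBoundaryCondition}. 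Hence the entire surface part of $\sum_{i,j}B_{h}^{\vp}$ is zero for every $e_{m}$, and the problem collapses to the two volume terms $-(vf_{h},\p_{x}e_{m})_{ij}$ and $-(E_{h}f_{h},\p_{v}e_{m})_{ij}$.

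The spatial volume term vanishes trivially for all three moments since $\p_{x}e_{m}=0$, so the entire burden rests on the velocity volume term, which is exactly where the distinction between the cases appears. For $m=0$ we have $\p_{v}e_{0}=0$, so this term also vanishes \emph{regardless of $E_{h}^{k}$}, yielding number conservation in full generality --- the second assertion. For $m=1,2$, however, $\p_{v}e_{1}=1$ and $\p_{v}e_{2}=v$ leave the residuals $-\sum_{i,j}(E_{h}f_{h},1)_{ij}=-\int_{D}E_{h}f_{h}\,dx\,dv$ and $-\sum_{i,j}(E_{h}f_{h},v)_{ij}=-\int_{D}E_{h}f_{h}\,v\,dx\,dv$, which are precisely the field--particle momentum and energy exchange terms. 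The main subtlety is that the explicit push alone carries no compensating update to the field momentum or field energy (cf.\ the flux terms $n_{\rm e}\Phi-\tfrac12 E^{2}$ in Eqs.~\eqref{eq:momentumConservation}--\eqref{eq:totalEnergyConservation}), so these residuals vanish if and only if $E_{h}^{k}=0$. Thus momentum and energy are conserved exactly in the field-free case, as claimed, and the remaining work is the routine bookkeeping of the telescoping interface sums, which I would organize face-by-face.
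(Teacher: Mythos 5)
Your proof is correct and follows essentially the same route as the paper's: testing the explicit update against $\varphi_{h}=\be$ (legitimate since $p\ge2$), letting interior interface terms telescope, killing the velocity boundary terms via Eq.~\eqref{eq:vpBoundaryCondition}, and observing that the only surviving volume term is the field--particle coupling $(E_{h}f_{h},\p_{v}\be)$, which vanishes entirely when $E_{h}^{k}=0$ and vanishes in its first component (number) always. The only cosmetic differences are that the paper first records a local, per-spatial-element balance (its Eq.~\eqref{eq:localConservationDirectExplicit}, with the coupling written as $E_{h}^{k}T\bsrho_{f_{h}}^{k}$) before summing over $i$, and it phrases global conservation as ``change only through the spatial domain boundaries'' rather than invoking periodic boundary conditions; also, your ``if and only if $E_{h}^{k}=0$'' is a slight overstatement, since the exchange integrals could vanish by accident for a nonzero field, but this does not affect the claim being proved.
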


\begin{proof}
  Setting $\varphi_{h}=\be$ into Eq.~\eqref{eq:directExplicitArs111}, using Eqs.~\eqref{eq:conservedMoments_f} and \eqref{eq:vpBLF}, and imposing the boundary conditions in Eq.~\eqref{eq:vpBoundaryCondition} gives
  \begin{equation}
    (\bsrho_{f_{h}}^{\star})_{i}
    =(\bsrho_{f_{h}}^{k})_{i} - \dt\big[\,\vint{\be\,\widehat{vf_{h}^{k}}(x_{i+\f{1}{2}},v)}_{D^{v}} - \vint{\be\,\widehat{vf_{h}^{k}}(x_{i-\f{1}{2}},v)}_{D^{v}}\,\big]
    +\dt\,(\,E_{h}^{k}T\bsrho_{f_{h}}^{k}\,)_{i}.  
    \label{eq:localConservationDirectExplicit}
  \end{equation}
%
  Then, using Eq.~\eqref{eq:localConservationDirectExplicit} with $E_{h}^{k}=0$, we obtain the globally integrated moments
  \begin{equation}
    \bM_{f_{h}}^{\star} = \bM_{f_{h}}^{k} - \dt\,\big[\,\vint{\be\,\widehat{vf_{h}^{k}}(x_{\max},v)}_{D^{v}} - \vint{\be\,\widehat{vf_{h}^{k}}(x_{\min},v)}_{D^{v}}\,\big],
    \label{eq:globalConservationDirectExplicit}
  \end{equation}
  which implies conservation of all the components of $\bM_{f_{h}}$ in the sense that the change is only due to flow through the domain boundaries.  
  When $E_{h}^{k}\ne0$, Eq.~\eqref{eq:globalConservationDirectExplicit} holds for the first component (particle conservation), since the first component of $T\bsrho_{f_{h}}$ is zero.  
\end{proof}

\begin{rem}
  Momentum and energy conservation was considered in the context of DG methods for the Vlasov--Poisson system in \cite{hakim_etal_2019}, where the electrostatic potential was obtained using a continuous finite element method.  
  (See also related work in \cite{ayuso_etal_2011,cheng_etal_2013,cheng_etal_2014}.)  
  While momentum and energy were not preserved exactly in \cite{hakim_etal_2019}, it was demonstrated that momentum conservation violations were small, and improved with increasing spatial resolution, commensurate with the polynomial degree of the approximation space, and independent of the velocity resolution.  
  Moreover, exact total (particle plus field) energy conservation in the semi-discrete limit ($\dt\to0$) was proved, provided the approximate Hamiltonian, $H_{h}=\f{1}{2}v^{2}+\Phi_{h}$, is in the continuous subset of the DG approximation space (see Proposition~3.2 in \cite{hakim_etal_2019}).  
  For the fully discrete scheme, the total energy conservation property is independent of the phase-space discretization, and depends solely on the time-stepping method.  
  Since we also obtain $\Phi_{h}$ with a continuous finite element method we expect similar results when using $p\ge2$ in the DG approximation space, and this is demonstrated numerically in Section~\ref{sec:numerical}.  
  \label{rem:momentumEnergyConservation}
\end{rem}

We next focus on the implicit step in Eq.~\eqref{eq:directImplicitArs111}.  
In our nodal DG method, where the degrees of freedom are defined at the set of GL quadrature points $S_{ij}\subset I_{ij}$, the distribution function on $I_{ij}$ is approximated by the representation
\begin{equation}
  f_{h}(x,v)|_{I_{ij}} = \sum_{r=1}^{p+1}\sum_{s=1}^{p+1}f_{rs}^{ij}\,\ell_{r}^{i}(x)\,\ell_{s}^{j}(v) = \sum_{r=1}^{p+1}f_{r}^{ij}(v)\,\ell_{r}^{i}(x),
  \label{eq:fNodalXV}
\end{equation}
where $f_{rs}^{ij}=f_{h}(x_{r},v_{s})|_{I_{ij}}$ for all $x_{r}\in S_{i}^{x}$ and $v_{s}\in S_{j}^{v}$, and we have defined
\begin{equation}
  f_{r}^{ij}(v) = \sum_{s=1}^{p+1}f_{rs}^{ij}\,\ell_{s}^{j}(v).  
  \label{eq:fNodalV}
\end{equation}
(Recall that $\{\ell_{r}^{i}\}_{q=1}^{p+1}$ and $\{\ell_{s}^{j}\}_{r=1}^{p+1}$ are Lagrange polynomials of degree $p$, constructed from the points $S_{i}^{x}$ and $S_{j}^{v}$, respectively.)  
We aim to show that the implicit solve preserves the moments of $f_{h}$ in a pointwise fashion within each spatial element.  
To this end, we first consider specific velocity integrals of the terms $(f_{h},\varphi_{h})_{ij}$ appearing in Eq.~\eqref{eq:directImplicitArs111}.  
\begin{lemma}
  Let $\psi_{h}\in\bbV_{h}^{x,p}$.  
  Then
  \begin{equation}
    \sum_{j=1}^{N^{v}}(f_{h},\psi_{h}\be)_{ij} = \sum_{r=1}^{p+1}(\psi_{h},\ell_{r}^{i})_{i}(\bsrho_{f_{h}})_{r}^{i}, 
    \label{eq:fVelocityMoments}
  \end{equation}
  where $(\bsrho_{f_{h}})_{r}^{i}=\vint{\be f_{r}^{i}}_{D^{v}}\equiv\sum_{j=1}^{N^{v}}\int_{I_{j}^{v}}\be f_{r}^{ij}\,dv$.  
  \label{lem:fVelocityMoments}
\end{lemma}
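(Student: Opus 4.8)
The plan is to establish Eq.~\eqref{eq:fVelocityMoments} by a direct expansion that exploits the tensor-product structure of the nodal basis, together with the elementary fact that $\be$ depends on $v$ alone while $\psi_{h}$ and the spatial Lagrange polynomials $\ell_{r}^{i}$ depend on $x$ alone. The identity is understood to hold componentwise in the three entries of $\be$, but the argument is identical for each component, so I would simply carry the vector $\be$ along.

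First I would unfold the left-hand side with the definition $(a,b)_{ij}=\int_{I_{ij}}a\,b\,dx\,dv$ and insert the representation $f_{h}|_{I_{ij}}=\sum_{r=1}^{p+1}f_{r}^{ij}(v)\,\ell_{r}^{i}(x)$ from Eq.~\eqref{eq:fNodalXV}. Because each summand factors into an $x$-dependent piece $\ell_{r}^{i}(x)\,\psi_{h}(x)$ and a $v$-dependent piece $f_{r}^{ij}(v)\,\be(v)$, Fubini applies term by term and the integral over $I_{ij}=I_{i}^{x}\times I_{j}^{v}$ splits into a product of one-dimensional integrals:
\[
  \sum_{j=1}^{N^{v}}(f_{h},\psi_{h}\be)_{ij}
  = \sum_{j=1}^{N^{v}}\sum_{r=1}^{p+1}\Big(\int_{I_{i}^{x}}\ell_{r}^{i}\,\psi_{h}\,dx\Big)\Big(\int_{I_{j}^{v}}\be\,f_{r}^{ij}\,dv\Big).
\]
The first factor is, by definition, the spatial inner product $(\psi_{h},\ell_{r}^{i})_{i}$.

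The decisive (and only mildly subtle) step is the interchange of the $j$- and $r$-summations. The spatial basis function $\ell_{r}^{i}(x)$, and hence the factor $(\psi_{h},\ell_{r}^{i})_{i}$, is independent of the velocity-element index $j$; this is precisely what permits pulling it outside the sum over $j$. What remains is $\sum_{j=1}^{N^{v}}\int_{I_{j}^{v}}\be\,f_{r}^{ij}\,dv$, which is exactly the definition of $(\bsrho_{f_{h}})_{r}^{i}$, and Eq.~\eqref{eq:fVelocityMoments} follows at once.

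There is no genuine analytic obstacle here; the computation is essentially mechanical. The only points requiring care are bookkeeping in nature: one must use the separable (tensor-product) form of the nodal basis so that Fubini can be applied to each term, and one must keep track of the fact that the $x$-factor is common to all velocity elements so that it can be extracted from the $j$-sum. (Well-definedness of the inner products is guaranteed by $p\ge2$, so that $\psi_{h}\be\in\bbV_{h}^{p}$, but this plays no role in the computation itself.)
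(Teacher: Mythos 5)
Your proposal is correct and follows the same route as the paper, whose proof is simply the instruction to insert the nodal representation of Eq.~\eqref{eq:fNodalXV} into the left-hand side; your write-up merely spells out the Fubini splitting and the interchange of the $j$- and $r$-sums that the paper leaves implicit. (The parenthetical about $p\ge2$ is unnecessary — the lemma holds for any $p\ge0$ since the integrals are well defined regardless — but this does not affect the argument.)
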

\begin{proof}
  The result follows from inserting Eq.~\eqref{eq:fNodalXV} into the left-hand side of Eq.~\eqref{eq:fVelocityMoments}.  
\end{proof}

We let $\{W_{q}\}_{q=1}^{p+1}$ denote the GL quadrature weights associated with the points $S_{i}^{x}$.  
The $(p+1)$-point GL quadrature integrates polynomials of degree $\le 2p+1$ exactly.  
In particular, for $\psi_{h}\in\bbV_{h}^{x,p}$ and $x_{r}\in S_{i}^{x}$, we have $(\psi_{h},\ell_{r}^{i})_{i}=\dx_{i}W_{r}\psi_{h}(x_{r})$.  
Then, letting $\psi_{h}=\ell_{q}^{i}$ in Eq.~\eqref{eq:fVelocityMoments}, since $\ell_{q}^{i}(x_{r})=\delta_{qr}$, the right-hand side equals $\dx_{i}W_{q}(\bsrho_{f_{h}})_{q}^{i}$.  

Next, we consider the velocity integrated LB collision operator.  
\begin{lemma}
  Let $\psi_{h}\in\bbV_{h}^{x,p}$.  
  Then
  \begin{equation}
    \sum_{j=1}^{N^{v}}B_{h}^{\lb}(f_{h},\bsrho_{f_{h}},\psi_{h}\be)_{ij}
    = - (\,T\vint{\be w_{f_{h}} f_{h}}_{D^{v}},\psi_{h}\,)_{i} - (\,\theta_{f_{h}}TT\vint{\be f_{h}}_{D^{v}},\psi_{h}\,)_{i}.
    \label{eq:lbBLF_Integrated}
  \end{equation}
  \label{lem:lbBLF_Integrated}
\end{lemma}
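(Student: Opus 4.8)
The plan is to substitute the vector-valued test function $\varphi_h=\psi_h\be$ into the Lenard--Bernstein form \eqref{eq:lbBLF} and sum over the velocity index $j$. Since $\psi_h$ is independent of $v$ and each component of $\be=(1,v,\f{1}{2}v^{2})^{\rm{T}}$ is a polynomial of degree at most two, the product $\psi_h\be$ restricts on each element to a tensor-product polynomial of degree $\le p$ and hence lies in $\bbV_h^p$ whenever $p\ge2$, so it is an admissible test function. The two algebraic identities that drive the entire computation are $\p_v\be=T\be$ and $\p_{vv}\be=TT\be$, which follow by direct differentiation from the definition of the matrix $T$; they convert $v$-derivatives of the test function into multiplication by the constant matrices $T$ and $TT$.

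First I would handle the surface terms. Because $\be$ and $\p_v\be=T\be$ are globally continuous in $v$, the test function $\psi_h\be$ and its velocity derivative $\psi_h T\be$ are continuous across every interior velocity interface; the recovery polynomial $\mathfrak{f}_h$ and $\p_v\mathfrak{f}_h$ are likewise continuous there by construction, and the upwind drift flux $\widehat{w_{f_h}f_h}$ is single-valued. Consequently, when I sum the interface contributions over $j$, the trace at each interior interface $v_{j+1/2}$ appears once with a plus sign (from element $j$) and once with a minus sign (from element $j+1$) against identical values, so the sum telescopes and only the contributions at $v_{\min}$ and $v_{\max}$ remain.

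Next I would dispatch these boundary contributions using the prescribed velocity boundary conditions. Grouping the surviving drift flux with the $\p_v\mathfrak{f}_h$ diffusion flux produces the combination $[\widehat{w_{f_h}f_h}-\theta_{f_h}\p_v\mathfrak{f}_h]\,\varphi_h$ evaluated at $v_{\min}$ and $v_{\max}$, which vanishes by Eq.~\eqref{eq:lbBoundaryCondition_1}; the leftover term $\theta_{f_h}\mathfrak{f}_h\,\p_v\varphi_h$ vanishes by Eq.~\eqref{eq:lbBoundaryCondition_2}. Hence all surface terms disappear and only the two volume integrals $-(w_{f_h}f_h,\p_v\varphi_h)_{ij}$ and $-(\theta_{f_h}f_h,\p_{vv}\varphi_h)_{ij}$ survive.

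Finally I would evaluate the volume terms. Replacing $\p_v\varphi_h=\psi_h T\be$ and $\p_{vv}\varphi_h=\psi_h TT\be$, pulling the constant matrices outside, and summing the velocity integrals over $j$ to reassemble $\int_{D^v}\!\cdots\,dv$, I recognize $\sum_j\int_{I_{ij}}w_{f_h}f_h\,\be\,dv=\vint{\be w_{f_h}f_h}_{D^v}$ and, using that $\theta_{f_h}$ depends only on $x$, $\sum_j\int_{I_{ij}}\theta_{f_h}f_h\,\be\,dv=\theta_{f_h}\vint{\be f_h}_{D^v}$, which gives exactly the right-hand side of Eq.~\eqref{eq:lbBLF_Integrated}. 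I expect the only delicate step to be the bookkeeping of the surface terms in the second and third paragraphs: one must verify that $\psi_h\be$ is $C^1$ in $v$ across interfaces so that both the drift and the recovery-based diffusion fluxes telescope, and that the regrouping into the boundary-condition combinations is precisely the one enforced by Eqs.~\eqref{eq:lbBoundaryCondition_1}--\eqref{eq:lbBoundaryCondition_2}.
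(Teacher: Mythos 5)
Your proof is correct and takes essentially the same route as the paper's: test with $\varphi_{h}:=\psi_{h}\be\in\bbV_{h}^{p}$ (valid since $p\ge2$), sum over velocity elements so that the single-valued drift flux and the continuous recovery data $\mathfrak{f}_{h}$, $\p_{v}\mathfrak{f}_{h}$ make the interior surface terms telescope, eliminate the boundary terms via Eqs.~\eqref{eq:lbBoundaryCondition_1}--\eqref{eq:lbBoundaryCondition_2}, and evaluate the remaining volume terms using $\p_{v}\be=T\be$ and $\p_{vv}\be=TT\be$. The paper states this in one sentence; your write-up simply supplies the telescoping and boundary-grouping details explicitly, and both are correct.
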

\begin{proof}
  The result is obtained straightforwardly with $\varphi_{h}:=\psi_{h}\be\in\bbV_{h}^{p}$ in Eq.~\eqref{eq:lbBLF}, summing over velocity elements, imposing the boundary conditions in Eqs.~\eqref{eq:lbBoundaryCondition_1} and \eqref{eq:lbBoundaryCondition_2} so that the surface terms vanish, and noting that $\p_{v}\be=T\be$ and $\p_{vv}\be=TT\be$.   
\end{proof}

\begin{rem}
We note that the conditions in Eqs.~\eqref{eq:lbBoundaryCondition_1} and \eqref{eq:lbBoundaryCondition_2}, used to arrive at Eq.~\eqref{eq:lbBLF_Integrated}, differ from the conditions used in \cite{hakim_etal_2020}.  
In \cite{hakim_etal_2020}, in the context of explicit time-stepping, only the conditions corresponding to Eq.~\eqref{eq:lbBoundaryCondition_1} were used, while $\mathfrak{f}_{h}|_{v=v_{\min}} := f_{h}|_{v=v_{\min}^{+}}$ and $\mathfrak{f}_{h}^{k+1}|_{v=v_{\max}} := f_{h}|_{v=v_{\max}^{-}}$.  
Then, corrections to the moments used to evaluate the LB operator, $u_{f_{h}}$ and $\theta_{f_{h}}$, were introduced to recover momentum and energy conservation.  
We have opted for the conditions in Eqs.~\eqref{eq:lbBoundaryCondition_1} and \eqref{eq:lbBoundaryCondition_2}, because the conditions used in \cite{hakim_etal_2020} are not straightforwardly compatible with the IMEX time-stepping scheme used here.  
\end{rem}

\begin{prop}
  Suppose the DG approximation space used in the direct method consists of polynomials of degree at least two ($p\ge2$), and assume the spatial integrals in Eq.~\eqref{eq:lbBLF} are evaluated with an $N$-point quadrature with points $\tilde{S}_{i}^{x}=\{\tilde{x}_{q}\}_{q=1}^{N}\subset I_{i}^{x}$ and weights $\{\tilde{W}_{q}\}_{q=1}^{N}$.  
  Then, the implicit update given by Eq.~\eqref{eq:directImplicitArs111} conserves particle number, momentum, and energy; i.e., $(\bsrho_{f_{h}}^{k+1})_{q}^{i}=(\bsrho_{f_{h}}^{\star})_{q}^{i}$ for all $x_{q}\in S_{i}^{x}$.
  \label{prop:implicitConservation_Direct}
\end{prop}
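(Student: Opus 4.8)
The plan is to test the implicit update Eq.~\eqref{eq:directImplicitArs111} against the vector test function $\varphi_{h}=\ell_{q}^{i}\be$ and reduce it, via Lemmas~\ref{lem:fVelocityMoments} and \ref{lem:lbBLF_Integrated}, to a nodal identity for the moments. Since $p\ge2$, each component of $\ell_{q}^{i}\be$ lies in $\bbV_{h}^{p}$, so this is a legitimate choice. Summing the resulting relation over the velocity elements $j=1,\ldots,N^{v}$ and applying Lemma~\ref{lem:fVelocityMoments} to the two mass terms $\sum_{j}(f_{h}^{k+1},\ell_{q}^{i}\be)_{ij}$ and $\sum_{j}(f_{h}^{\star},\ell_{q}^{i}\be)_{ij}$ collapses them, using the GL exactness identity $(\ell_{q}^{i},\ell_{r}^{i})_{i}=\dx_{i}W_{q}\delta_{qr}$ (exact because $\ell_{q}^{i}\ell_{r}^{i}$ has degree $\le 2p$), to $\dx_{i}W_{q}(\bsrho_{f_{h}}^{k+1})_{q}^{i}$ and $\dx_{i}W_{q}(\bsrho_{f_{h}}^{\star})_{q}^{i}$, respectively. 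The diagonal structure in $\delta_{qr}$ is what decouples the GL nodes and yields a \emph{pointwise} (rather than merely cell-integrated) statement. Everything then reduces to showing that the velocity-summed Lenard--Bernstein contribution $\sum_{j}B_{h}^{\lb}(f_{h}^{k+1},\bsrho_{f_{h}}^{k+1},\ell_{q}^{i}\be)_{ij}$ vanishes.

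By Lemma~\ref{lem:lbBLF_Integrated}, this contribution equals $-(T\vint{\be w_{f_{h}}f_{h}}_{D^{v}}+\theta_{f_{h}}TT\vint{\be f_{h}}_{D^{v}},\ell_{q}^{i})_{i}$, evaluated at $t^{k+1}$. The key step is to show that the vector integrand $T\vint{\be w_{f_{h}}f_{h}}_{D^{v}}+\theta_{f_{h}}TT\vint{\be f_{h}}_{D^{v}}$ is identically the zero vector as a function of $x$. Using the strictly lower-triangular, nilpotent structure of $T$, so that $T(a_{0},a_{1},a_{2})^{\rm{T}}=(0,a_{0},a_{1})^{\rm{T}}$ and $TT(a_{0},a_{1},a_{2})^{\rm{T}}=(0,0,a_{0})^{\rm{T}}$, the number component is trivially zero; the momentum component reduces to $\vint{w_{f_{h}}f_{h}}_{D^{v}}=u_{f_{h}}n_{f_{h}}-\vint{vf_{h}}_{D^{v}}$, which vanishes by the definition $u_{f_{h}}=\vint{vf_{h}}_{D^{v}}/n_{f_{h}}$; and the energy component reduces to $\vint{vw_{f_{h}}f_{h}}_{D^{v}}+\theta_{f_{h}}n_{f_{h}}$, which vanishes after substituting $u_{f_{h}}$ and $\theta_{f_{h}}$ as the exact velocity moments of $f_{h}$. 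Crucially, these cancellations are algebraic identities in the moments of $f_{h}$, so they hold for the (as yet unknown) implicit solution $f_{h}^{k+1}$ and its moments $\bsrho_{f_{h}}^{k+1}$ without any knowledge of $f_{h}^{k+1}$ itself.

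Because the cancellation holds pointwise in $x$, the integrand is the zero vector, so the inner product against $\ell_{q}^{i}$ vanishes \emph{for any choice of spatial quadrature} --- in particular for the $N$-point rule assumed in the statement. This is precisely why the conclusion is insensitive to the spatial quadrature: the moment defect of the discrete LB operator is zero before integration in $x$. Combining this with the mass-term reduction gives $\dx_{i}W_{q}(\bsrho_{f_{h}}^{k+1})_{q}^{i}=\dx_{i}W_{q}(\bsrho_{f_{h}}^{\star})_{q}^{i}$, and since $\dx_{i}>0$ and the GL weight $W_{q}>0$, we conclude $(\bsrho_{f_{h}}^{k+1})_{q}^{i}=(\bsrho_{f_{h}}^{\star})_{q}^{i}$ for every node $x_{q}\in S_{i}^{x}$, which is the claimed pointwise conservation of number, momentum, and energy.

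I expect the main obstacle to be the second paragraph: verifying the pointwise-in-$x$ cancellation of the energy component, which requires pairing the drift contribution $\vint{vw_{f_{h}}f_{h}}_{D^{v}}$ against the diffusion contribution $\theta_{f_{h}}n_{f_{h}}$ and using the exact-moment definitions of $u_{f_{h}}$ and $\theta_{f_{h}}$. This step also relies on the velocity integrals being evaluated exactly, which is guaranteed for $p\ge2$ since the relevant integrands are polynomials of degree $\le 2p+1$ in $v$; it is this exactness, rather than any property of the spatial quadrature, that turns the balance of terms {\rm I} and {\rm III} in the spirit of Eq.~\eqref{eq:gMomentsChange} into an exact algebraic identity.
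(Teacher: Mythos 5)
Your proposal is correct and follows essentially the same route as the paper: test Eq.~\eqref{eq:directImplicitArs111} with $\varphi_{h}=\ell_{q}^{i}\be$, collapse the mass terms via Lemma~\ref{lem:fVelocityMoments} and the GL exactness $(\ell_{q}^{i},\ell_{r}^{i})_{i}=\dx_{i}W_{q}\delta_{qr}$, reduce the collision contribution via Lemma~\ref{lem:lbBLF_Integrated}, and verify the algebraic cancellation $T\vint{\be w_{f_{h}}f_{h}}_{D^{v}}=-n_{f_{h}}\theta_{f_{h}}(0,0,1)^{\rm{T}}=-\theta_{f_{h}}TT\vint{\be f_{h}}_{D^{v}}$, which is exactly the paper's computation. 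Your only (cosmetic) difference is phrasing the cancellation as pointwise in $x$ before spatial integration, whereas the paper verifies it at each quadrature point $\tilde{x}_{m}$; both yield the same quadrature-insensitive conclusion.
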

\begin{proof}
  Since $p\ge2$, letting $\varphi_{h}=\ell_{q}^{i}\be$ in Eq.~\eqref{eq:directImplicitArs111}, summing over velocity elements, using Eq.~\eqref{eq:fVelocityMoments} from Lemma~\ref{lem:fVelocityMoments} and Eq.~\eqref{eq:lbBLF_Integrated} from Lemma~\ref{lem:lbBLF_Integrated} with $\psi_{h}=\ell_{q}^{i}$, and evaluating the spatial integrals on the right-hand side of Eq.~\eqref{eq:lbBLF_Integrated} with an $N$-point quadrature gives
  \begin{equation}
    (\bsrho_{f_{h}}^{k+1})_{q}^{i} = (\bsrho_{f_{h}}^{\star})_{q}^{i}
    +\f{\dt\nu}{W_{q}}\sum_{m=1}^{N}\tilde{W}_{m}\,\ell_{q}^{i}(\tilde{x}_{m})
    \Big[\,
      T\vint{\be w_{m}^{k+1} f_{m}^{k+1}}_{D^{v}}
      +\theta_{m}^{k+1}TT\vint{\be f_{m}^{k+1}}_{D^{v}}
    \,\Big],
    \label{eq:localConservationConstraintDirectImplicit}
  \end{equation}
  where $f_{m}(v)=f_{h}(\tilde{x}_{m},v)$, $w_{m}=u_{m}-v$, and $u_{m}=u_{f_{h}}(\tilde{x}_{m})$ and $\theta_{m}=\theta_{f_{h}}(\tilde{x}_{m})$.  
  
  It remains to show that the expression inside the square brackets on the right-hand side of Eq.~\eqref{eq:localConservationConstraintDirectImplicit} vanishes for all quadrature points $\tilde{x}_{m}$.  
  To this end, recall that $T\be=(0,1,v)^{\rm{T}}$ and $TT\be=(0,0,1)^{\rm{T}}$.  
  Then, direct evaluation of each term inside the square brackets gives
  \begin{equation}
    T\vint{\be w_{m} f_{m}}_{D^{v}}
    =-n_{m}\theta_{m}\left(\begin{array}{c}0\\0\\1\end{array}\right)
    \quad\text{and}\quad
    \theta_{m}TT\vint{\be f_{m}}_{D^{v}}
    =n_{m}\theta_{m}\left(\begin{array}{c}0\\0\\1\end{array}\right),
  \end{equation}
  where $n_{m}=n_{f_{h}}(\tilde{x}_{m})$.  
  This completes the proof.  
\end{proof}

\begin{rem}
  In Eq.~\eqref{eq:localConservationConstraintDirectImplicit} in Proposition~\ref{prop:implicitConservation_Direct}, we use a general $N$-point quadrature to evaluate the spatial integrals originating from the LB form in Eq.~\eqref{eq:lbBLF}.  
  In the numerical examples presented in Section~\ref{sec:numerical}, we use $(p+1)$-point GL quadratures to evaluate these spatial integrals.  
  That is, the quadrature points coincide with the interpolation points in our nodal DG scheme ($\tilde{S}_{i}^{x}=S_{i}^{x}$), similar to the spectral-type nodal collocation DG approximation in \cite{bassi_etal_2013}.  
  Since the spatial integrals involve polynomials of degree $3p$, this approximation results in under-integration.  
  The velocity integrals involve polynomials of degree at most $p+2$, which are evaluated exactly with the $(p+1)$-point GL quadrature.  
\end{rem}

\subsection{Conservation Properties of the mM Method for the VPLB System}
\label{sec:conservationMM}

With the Butcher tables in Eq.~\eqref{eq:butcherARS111}, Eqs.~\eqref{eq:imexMacroExplicit}-\eqref{eq:imexMicroExplicit} can be expressed as
\begin{align}
  (\bsrho_{f,h}^{k+1},\psi_{h})_{i}
  &= (\bsrho_{f,h}^{k},\psi_{h})_{i} - \dt B_{h}^{\macro}(\bsrho_{f,h}^{k},g_{h}^{k},E_{h}^{k},\psi_{h})_{i} \label{eq:macroExplicitARS111} \\
  (g_{h}^{\star},\varphi_{h})_{ij}
  &=(g_{h}^{k},\varphi_{h})_{ij} + \big[\,(\cE[\bsrho_{f,h}^{k}],\varphi_{h})_{ij}-(\cE[\bsrho_{f,h}^{k+1}],\varphi_{h})_{ij}\,\big] \label{eq:microExplicitARS111} \nonumber \\
  &\hspace{12pt}
  -\dt \big[\, B_{h}^{\vp}(g_{h}^{k},E_{h}^{k},\varphi_{h})_{ij} + B_{h}^{\micro}(\bsrho_{f,h}^{k},E_{h}^{k},\varphi_{h})_{ij}\,\big] \\
  (g_{h}^{k+1},\varphi_{h})_{ij}
  &=(g_{h}^{\star},\varphi_{h})_{ij} - \dt \nu B_{h}^{\lb}(g_{h}^{k+1},\bsrho_{f,h}^{k+1},\varphi_{h})_{ij}, \label{eq:microImplicitARS111}
\end{align}
where in Eq.~\eqref{eq:microImplicitARS111} we have deliberately left out the cleaning limiter mentioned in Section~\ref{imexrkMM} (cf. Eq.~\eqref{eq:imexMicroImplicit}).  

We start with the explicit step in Eq.~\eqref{eq:macroExplicitARS111}.  In analogy with Proposition~\ref{prop:explicitConservation_Direct} for the direct method, we have the following:
\begin{prop}
  The explicit update given by Eq.~\eqref{eq:macroExplicitARS111} conserves particle number, momentum, and energy for the case with vanishing electric field ($E_{h}^{k}=0$).  
  For the case with nonvanishing electric field, the explicit update conserves particle number.  
  \label{eq:explicitConservation_mM}
\end{prop}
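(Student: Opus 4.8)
The plan is to mirror the argument used for Proposition~\ref{prop:explicitConservation_Direct}, but now working directly with the evolved macroscopic unknown $\bsrho_{f,h}$ rather than extracting moments via the choice $\varphi_{h}=\be$. Since the three components of $\bsrho_{f,h}$ already represent the number, momentum, and energy densities (through the bijection in Eq.~\eqref{eq:momentFluidBijection}), the natural test function is the constant $\psi_{h}\equiv 1\in\bbV_{h}^{x,p}$, which is admissible for any $p\ge0$. Accordingly, I would define the local and global moments $(\bsrho_{f,h})_{i}=(\bsrho_{f,h},1)_{i}$ and $\bM_{f,h}=\sum_{i=1}^{N^{x}}(\bsrho_{f,h},1)_{i}$, in analogy with Eq.~\eqref{eq:conservedMoments_f}.

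First I would substitute $\psi_{h}=1$ into Eq.~\eqref{eq:macroExplicitARS111} and expand the macro form in Eq.~\eqref{eq:macroBLF}. Because $\p_{x}\psi_{h}=0$, both volume contributions $(\bF(\bsrho_{f,h}^{k}),\p_{x}\psi_{h})_{i}$ and $(\bff(g_{h}^{k}),\p_{x}\psi_{h})_{i}$ drop out immediately, leaving only the interface flux differences for $\widehat{\bF(\bsrho_{f,h}^{k})}$ and $\widehat{\bff(g_{h}^{k})}$ together with the source contribution $-(E_{h}^{k}T\bsrho_{f,h}^{k},1)_{i}$. This yields a local balance for $(\bsrho_{f,h}^{k+1},1)_{i}$ on each spatial element.

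Next I would sum this local balance over $i=1,\ldots,N^{x}$ to form $\bM_{f,h}^{k+1}$. The key structural fact is that the numerical fluxes $\widehat{\bF(\bsrho_{f,h}^{k})}$ and $\widehat{\bff(g_{h}^{k})}$, defined in Eqs.~\eqref{eq:numercalFluxMacroMacroA} and \eqref{eq:numercalFluxMacroMicro}, are single-valued at each interior interface $x_{i+\f{1}{2}}$; hence the interface terms telescope and only the fluxes evaluated at $x_{\min}$ and $x_{\max}$ survive. The change in $\bM_{f,h}$ from the advective part is then only due to flow through the domain boundaries, exactly as in Eq.~\eqref{eq:globalConservationDirectExplicit}.

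The one obstruction to full conservation is the electric-field source, which I would dispatch using the nilpotent structure of $T$. Because $E_{h}^{k}$ is elementwise constant (Remark~\ref{rem:constantElectricField}), the summed source equals $\dt\sum_{i}E_{h}^{k}|_{I_{i}^{x}}\,T(\bsrho_{f,h}^{k},1)_{i}$, and since the first row of $T$ vanishes, its number (first) component is identically zero. Thus particle number is conserved for arbitrary $E_{h}^{k}$, while the momentum and energy components acquire a generically nonzero source unless $E_{h}^{k}=0$, in which case the entire source drops and all three moments are conserved up to boundary flux. I expect the only genuinely nontrivial points to be the telescoping of the single-valued nonlinear interface fluxes and the observation that only the first row of $T$ is relevant; everything else is the routine constant-test-function DG conservation computation, identical in spirit to Proposition~\ref{prop:explicitConservation_Direct}.
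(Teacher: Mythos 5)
Your proposal is correct and follows essentially the same route as the paper: the paper's proof also sets $\psi_{h}=1$ in Eq.~\eqref{eq:macroExplicitARS111}, notes that the volume terms drop, obtains the local flux-plus-source balance (its Eq.~\eqref{eq:localConservationMacroExplicit}), and then defers to the telescoping and $T$-structure argument of Proposition~\ref{prop:explicitConservation_Direct} exactly as you spell out. The only difference is that you write out the details the paper omits by reference; there is no substantive divergence.
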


\begin{proof}
  Setting $\psi_{h}=1$ in Eq.~\eqref{eq:macroExplicitARS111} and using Eq.~\eqref{eq:macroBLF} gives
  \begin{align}
    (\bsrho_{f,h}^{k+1})_{i}
    &= (\bsrho_{f,h}^{k})_{i} - \dt 
    \Big\{\,
      \big[\,
        \widehat{\bF(\bsrho_{f,h}^{k})}(x_{i+\f{1}{2}}) + \widehat{\bff(g_{h}^{k})}(x_{i+\f{1}{2}})
      \,\big] \nonumber \\
      &\hspace{72pt}
      -
      \big[\,
        \widehat{\bF(\bsrho_{f,h}^{k})}(x_{i-\f{1}{2}})+\widehat{\bff(g_{h}^{k})}(x_{i-\f{1}{2}})
      \,\big]
    \,\Big\}
    +\dt\,(E_{h}^{k}T\,\bsrho_{f,h}^{k})_{i}.
    \label{eq:localConservationMacroExplicit}
  \end{align}
  Eq.~\eqref{eq:localConservationMacroExplicit} corresponds to Eq.~\eqref{eq:localConservationDirectExplicit} for the direct method.  The  remainder of the proof then follows the logic of Proposition~\ref{prop:explicitConservation_Direct} and is omitted.
\end{proof}

For consistency it is important to maintain the constraints in Eq.~\eqref{eq:gConstraints} at the numerical level, in the sense that
\begin{equation}
  (\,\vint{\be\,g_{h}}_{D^{v}},\psi_{h}\,)_{i} = 0 \quad\forall\psi_{h}\in\bbV_{h}^{x,p}.
  \label{eq:gConstraintsWeak}
\end{equation}
We show that, under relatively mild conditions, these constraints are maintained pointwise, $(\bsrho_{g_{h}})_{q}^{i}=\vint{\be g_{q}^{i}}_{D^{v}}=0$, for all $x_{q}\in S_{i}^{x}$.   
Here, $(\bsrho_{g_{h}})_{q}^{i}$ is defined as in Eq.~\eqref{eq:fVelocityMoments}, with $f_{h}$ replaced by $g_{h}$.  
Then, using Eq.~\eqref{eq:microExplicitARS111} with $\varphi_{h}:=\psi_{h}\be$, where $\psi_{h}\in\bbV_{h}^{x,p}$, and Lemma~\ref{lem:fVelocityMoments}, the moments of the micro distribution after the explicit step can then be expressed as
\begin{align}
  \sum_{r=1}^{p+1}(\psi_{h},\ell_{r}^{i})_{i}
  (\bsrho_{g_{h}}^{\star})_{r}^{i}
  &=\sum_{r=1}^{p+1}(\psi_{h},\ell_{r}^{i})_{i} (\bsrho_{g_{h}}^{k})_{r}^{i} 
  + \big[\,(\vint{\be\cE[\bsrho_{f,h}^{k}]}_{D^{v}},\psi_{h})_{i}-(\vint{\be\cE[\bsrho_{f,h}^{k+1}]}_{D^{v}},\psi_{h})_{i}\,\big] \nonumber \\
  &\hspace{12pt}
  -\dt \sum_{j=1}^{N^{v}}\big[\, B_{h}^{\vp}(g_{h}^{k},E_{h}^{k},\psi_{h}\be)_{ij} + B_{h}^{\micro}(\bsrho_{f,h}^{k},E_{h}^{k},\psi_{h}\be)_{ij}\,\big].  
  \label{eq:localConservationMicroExplicitIntermediate}
\end{align}


Next, our goal is to insert Eq.~\eqref{eq:macroExplicitARS111} into the right-hand side of Eq.~\eqref{eq:localConservationMicroExplicitIntermediate}; specifically by replacing the velocity moments of the Maxwellian --- the second and third terms on the right-hand side of Eq.~\eqref{eq:localConservationMicroExplicitIntermediate} --- with $\bsrho_{f,h}$.  
This requires some further specification of how the Maxwellian is approximated within each spatial element to evaluate these terms.  
In the nodal DG scheme, the representation of the macro moments on element $I_{i}^{x}$ is given by the nodal expansion
\begin{equation}
  \bsrho_{f,h}(x)|_{I_{i}^{x}} = \sum_{k=1}^{p+1}(\bsrho_{f})_{k}^{i}\,\ell_{k}^{i}(x),
  \label{eq:momentsNodalExpansion}
\end{equation}
where $(\bsrho_{f})_{k}^{i}=\bsrho_{f,h}(x_{k})$, for all $x_{k}\in S_{i}^{x}$.  
The values $(\bsrho_{f})_{k}^{i}$ are then used to define the Maxwellian on $I_{i}^{x}$, i.e.,
\begin{equation}
  \cE[\bsrho_{f,h}](x,v)|_{I_{i}^{x}} := \sum_{k=1}^{p+1}\cE_{k}^{i}(v)\ell_{k}^{i}(x),~\mbox{where}~\cE_{k}^{i}(v)=\cE[(\bsrho_{f})_{k}^{i}](v),
  \label{eq:maxwellianApproximate}
\end{equation}
in order to evaluate the second and third terms on the right-hand side of Eq.~\eqref{eq:localConservationMicroExplicitIntermediate}.  
(The expansion in Eq.~\eqref{eq:maxwellianApproximate} is also used when evaluating the spatial integrals in Eq.~\eqref{eq:microBLF} with $(p+1)$-point GL quadratures.)  

\begin{lemma}
  Let the Maxwellian on $I_{i}^{x}$ be approximated by the expansion in Eq.~\eqref{eq:maxwellianApproximate}.  
  Then
  \begin{equation}
    (\vint{\be\cE[\bsrho_{f,h}]}_{\bbR},\psi_{h})_{i} = (\bsrho_{f,h},\psi_{h})_{i}\quad\forall\psi_{h}\in\bbV_{h}^{x,p}.
    \label{eq:momentsOfApproximateMaxwellian}
  \end{equation}
  \label{lem:momentsOfApproximateMaxwellian}
\end{lemma}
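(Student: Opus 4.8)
The plan is to reduce the claimed weak identity to a pointwise one: I will show that, under the nodal approximation in Eq.~\eqref{eq:maxwellianApproximate}, the velocity moments of the approximate Maxwellian reproduce $\bsrho_{f,h}$ exactly as a function of $x$ on each element $I_{i}^{x}$, after which the conclusion holds against any test function whatsoever.

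First I would insert the expansion from Eq.~\eqref{eq:maxwellianApproximate} into the velocity integral and pull the finite sum over nodes outside the integral, using that the Lagrange factors $\ell_{k}^{i}(x)$ are independent of $v$:
\[
  \vint{\be\,\cE[\bsrho_{f,h}]}_{\bbR}\big|_{I_{i}^{x}}
  = \sum_{k=1}^{p+1}\ell_{k}^{i}(x)\,\vint{\be\,\cE_{k}^{i}}_{\bbR}.
\]
Next, at each node $x_{k}\in S_{i}^{x}$ the factor $\cE_{k}^{i}=\cE[(\bsrho_{f})_{k}^{i}]$ is an exact Maxwellian, so by the defining moment property $\vint{\be\,\cE[\bsrho_{f}]}_{\bbR}=\bsrho_{f}$ one has $\vint{\be\,\cE_{k}^{i}}_{\bbR}=(\bsrho_{f})_{k}^{i}$. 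The right-hand side then collapses to $\sum_{k=1}^{p+1}(\bsrho_{f})_{k}^{i}\,\ell_{k}^{i}(x)$, which is precisely the nodal expansion of $\bsrho_{f,h}$ in Eq.~\eqref{eq:momentsNodalExpansion}. Hence $\vint{\be\,\cE[\bsrho_{f,h}]}_{\bbR}=\bsrho_{f,h}$ pointwise on $I_{i}^{x}$, and multiplying by $\psi_{h}$ and integrating over $I_{i}^{x}$ immediately yields Eq.~\eqref{eq:momentsOfApproximateMaxwellian} for every $\psi_{h}$, in particular for $\psi_{h}\in\bbV_{h}^{x,p}$.

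The step that carries all the weight --- and the one I would flag as the crux --- is the exactness of the velocity integration: the identity $\vint{\be\,\cE_{k}^{i}}_{\bbR}=(\bsrho_{f})_{k}^{i}$ holds only because the moments of the Maxwellian are evaluated analytically rather than by a finite quadrature in $v$. Had these velocity integrals been approximated numerically, the recovered moments would differ from $(\bsrho_{f})_{k}^{i}$ and the lemma would fail; this is exactly the consistency requirement emphasized in the remark following Eq.~\eqref{eq:velocityBoundaryConditionMaxwellian} and is what ultimately allows the Maxwellian contributions in Eq.~\eqref{eq:localConservationMicroExplicitIntermediate} to be balanced against the macro update. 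The remaining manipulations --- linearity, interchanging the finite sum with the integral, and recognizing the nodal expansion --- are entirely routine.
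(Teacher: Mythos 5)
Your proof is correct and takes essentially the same route as the paper's: insert the nodal expansion from Eq.~\eqref{eq:maxwellianApproximate}, use the exact Maxwellian moment identity $\vint{\be\,\cE_{k}^{i}}_{\bbR}=(\bsrho_{f})_{k}^{i}$, and recognize the nodal expansion of $\bsrho_{f,h}$ from Eq.~\eqref{eq:momentsNodalExpansion}. The only cosmetic difference is that you first establish the pointwise identity $\vint{\be\,\cE[\bsrho_{f,h}]}_{\bbR}=\bsrho_{f,h}$ on $I_{i}^{x}$ and then integrate against $\psi_{h}$, whereas the paper carries the inner product $(\cdot,\psi_{h})_{i}$ through every step; your version is marginally stronger but the content is identical.
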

\begin{proof}
  The result follows by inserting Eq.~\eqref{eq:maxwellianApproximate} into the left-hand side of Eq.~\eqref{eq:momentsOfApproximateMaxwellian}. 
  Using the fact that $\vint{\be\cE_{k}^{i}}_{\bbR}=(\bsrho_{f})_{k}^{i}$ and the expansion in Eq.~\eqref{eq:momentsNodalExpansion} gives
  \begin{equation}
    (\vint{\be\cE[\bsrho_{f,h}]}_{\bbR},\psi_{h})_{i}
    =\sum_{k=1}^{p+1}\vint{\be\cE_{k}^{i}}_{\bbR}(\ell_{k}^{i},\psi_{h})_{i}
    =\sum_{k=1}^{p+1}(\bsrho_{f})_{k}^{i}\,(\ell_{k}^{i},\psi_{h})_{i}
    =(\bsrho_{f,h},\psi_{h})_{i}.
  \end{equation}
\end{proof}

\begin{lemma}
  Let $\psi_{h}\in\bbV_{h}^{x,p}$.  
  Then
  \begin{align}
    \sum_{j=1}^{N^{v}}B_{h}^{\vp}(g_{h},E_{h},\psi_{h}\be)_{ij}
    &=
    \big[\,
      \vint{\be\,\widehat{vg_{h}}(x_{i+\f{1}{2}},v)}_{D^{v}}\,\psi_{h}(x_{i+\f{1}{2}}^{-}) 
      - \vint{\be\,\widehat{vg_{h}}(x_{i-\f{1}{2}},v)}_{D^{v}}\,\psi_{h}(x_{i-\f{1}{2}}^{+})
    \,\big] \nonumber \\
    &\hspace{12pt}
    - (\,\vint{\be vg_{h}}_{D^{v}},\p_{x}\psi_{h}\,)_{i}
    - (\,E_{h}T\vint{\be g_{h}}_{D^{v}},\psi_{h}\,)_{i}.
    \label{eq:vpBLF_Integrated}
  \end{align}
  \label{lem:vpBLF_Integrated}
\end{lemma}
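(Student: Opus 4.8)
The plan is to mirror the computation used for the Lenard--Bernstein form in Lemma~\ref{lem:lbBLF_Integrated}, substituting the vector-valued test function $\varphi_h = \psi_h\be$ into the Vlasov form $B_h^{\vp}(g_h,E_h,\cdot)_{ij}$ of Eq.~\eqref{eq:vpBLF} (with $f_h$ replaced by $g_h$) and summing over the velocity elements $j=1,\ldots,N^v$. The essential structural fact is that $\be$ depends only on $v$, so that $\p_x\varphi_h = (\p_x\psi_h)\,\be$ while $\p_v\varphi_h = \psi_h\,\p_v\be = \psi_h\,T\be$, the latter identity already invoked in the proof of Lemma~\ref{lem:lbBLF_Integrated}. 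Summation over $j$ turns each $\sum_j\int_{I_j^v}$ into $\vint{\cdot}_{D^v}$, which is precisely what produces the velocity-integrated moments appearing on the right-hand side.

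I would then dispatch the four terms of Eq.~\eqref{eq:vpBLF} one at a time. The spatial surface term is \emph{not} summed away, since the sum runs over $j$ only and not over $i$; it therefore yields directly the boundary bracket on the right-hand side, with $\be$ integrated against $\widehat{vg_h}(x_{i\pm\f{1}{2}},v)$ over $D^v$ and the (possibly discontinuous) factor $\psi_h(x_{i\pm\f{1}{2}}^{\mp})$ left intact. The spatial volume term $-(vg_h,\p_x\varphi_h)_{ij}$ collapses, after summing in $j$, to the term $-(\vint{\be v g_h}_{D^v},\p_x\psi_h)_i$. The velocity volume term $-(E_hg_h,\p_v\varphi_h)_{ij} = -(E_hg_h,\psi_h\,T\be)_{ij}$ becomes, upon summing in $j$ and pulling the constant matrix $T$ through the velocity integral, the term $-(E_h\,T\vint{\be g_h}_{D^v},\psi_h)_i$.

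The only term requiring genuine care is the velocity surface term, and I expect this to be the main point of the argument. Summing over $j$ produces a telescoping sum over the interior velocity interfaces $v_{j+\f{1}{2}}$: at each such interface the numerical flux $\widehat{E_hg_h}(x,v_{j+\f{1}{2}})$ is single-valued, and the two contributions it generates carry the weights $\be(v_{j+\f{1}{2}}^{-})$ (from $I_j^v$) and $-\be(v_{j+\f{1}{2}}^{+})$ (from $I_{j+1}^v$). Because $\be$ is a fixed polynomial and hence continuous across the interface, $\be(v_{j+\f{1}{2}}^{-}) = \be(v_{j+\f{1}{2}}^{+})$, so these interior contributions cancel pairwise. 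What survives is only the pair at the endpoints $v_{\min}$ and $v_{\max}$, and these vanish by the zero-flux boundary conditions of Eq.~\eqref{eq:vpBoundaryCondition}, imposed on $g_h$ as stipulated at the end of Section~\ref{sec:dgMicro}. Collecting the three surviving contributions then gives exactly Eq.~\eqref{eq:vpBLF_Integrated}. The chief obstacle is thus not algebraic difficulty but the careful bookkeeping of this telescoping cancellation, which hinges on the continuity of the moment weights $\be$ in $v$ together with the single-valuedness of the numerical flux --- the same mechanism that underlies the conservation structure of the scheme.
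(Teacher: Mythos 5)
Your proof is correct and takes essentially the same approach as the paper's, which simply sets $\varphi_{h}:=\psi_{h}\be$ in Eq.~\eqref{eq:vpBLF}, sums over velocity elements, and applies the zero-flux conditions of Eq.~\eqref{eq:vpBoundaryCondition} with $f_{h}$ replaced by $g_{h}$. The telescoping cancellation of interior velocity-interface contributions --- resting on the continuity of $\be$ in $v$ and the single-valuedness of the numerical flux --- that you spell out is exactly the mechanism the paper's one-line proof leaves implicit.
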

\begin{proof}
  The result is obtained by setting $\varphi_{h}:=\psi_{h}\be$ in Eq.~\eqref{eq:vpBLF}, summing over velocity elements, and applying Eq.~\eqref{eq:vpBoundaryCondition} (with $f_{h}$ replaced by $g_{h}$).  
\end{proof}

\begin{lemma}
  Let $\psi_{h}\in\bbV_{h}^{x,p}$.  
  Then
  \begin{align}
    \sum_{j=1}^{N^{v}}B_{h}^{\micro}(\bsrho_{f,h},E_{h},\psi_{h}\be)_{ij}
    &=
    \big[\,
      \vint{\be\,\widehat{v\cE}[\bsrho_{f,h}](x_{i+\f{1}{2}},v)}_{D^{v}}\,\psi_{h}(x_{i+\f{1}{2}}^{-})
      -\vint{\be\,\widehat{v\cE}[\bsrho_{f,h}](x_{i-\f{1}{2}},v)}_{D^{v}}\,\psi_{h}(x_{i-\f{1}{2}}^{+})
    \,\big] \nonumber \\
    &\hspace{12pt}
    - (\,\vint{\be v\cE[\bsrho_{f,h}^{k}]}_{D^{v}},\p_{x}\psi_{h}\,)_{i}
    - (\,E_{h}T\vint{\be\cE[\bsrho_{f,h}]}_{D^{v}},\psi_{h}\,)_{i}.  
    \label{eq:microBLF_Integrated}
  \end{align}
  \label{lem:microBLF_Integrated}
\end{lemma}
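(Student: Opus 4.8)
The plan is to prove this identity by direct computation, mirroring the proof of Lemma~\ref{lem:vpBLF_Integrated}. The starting point is to substitute the test function $\varphi_{h}:=\psi_{h}\be$ into the definition of $B_{h}^{\micro}$ in Eq.~\eqref{eq:microBLF} and sum over the velocity elements $j=1,\ldots,N^{v}$. (Since $p\ge2$, every component of $\psi_{h}\be$ lies in $\bbV_{h}^{p}$, so this is an admissible choice.) The four contributions in Eq.~\eqref{eq:microBLF}---two from spatial advection and two from velocity advection---can then be handled separately.

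For the spatial-advection terms, I would use that $\psi_{h}$ is independent of $v$ while $\be$ is independent of $x$. In the surface term $\int_{I_{j}^{v}}\widehat{v\cE}[\bsrho_{f,h}](x_{i\pm\f{1}{2}},v)\,\psi_{h}(x_{i\pm\f{1}{2}}^{\mp})\,\be\,dv$, the factor $\psi_{h}$ is pulled outside the velocity integral, and the sum over $j$ reconstitutes the full integral over $D^{v}$, yielding $\vint{\be\,\widehat{v\cE}[\bsrho_{f,h}](x_{i\pm\f{1}{2}},v)}_{D^{v}}\,\psi_{h}(x_{i\pm\f{1}{2}}^{\mp})$, i.e.\ the first bracket on the right-hand side. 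For the volume term, $\p_{x}(\psi_{h}\be)=\be\,\p_{x}\psi_{h}$, so $-\sum_{j}(v\cE[\bsrho_{f,h}],\be\,\p_{x}\psi_{h})_{ij}=-(\vint{\be v\cE[\bsrho_{f,h}]}_{D^{v}},\p_{x}\psi_{h})_{i}$, which is the second term on the right-hand side.

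For the velocity-advection terms, the key identity is $\p_{v}\be=T\be$, which turns the volume term $-\sum_{j}(E_{h}\cE[\bsrho_{f,h}],\psi_{h}\,T\be)_{ij}$ into $-(E_{h}T\vint{\be\cE[\bsrho_{f,h}]}_{D^{v}},\psi_{h})_{i}$, the third term on the right-hand side. The velocity surface terms, I claim, contribute nothing. This is the only step requiring care: after summing over $j$, the interior velocity interfaces telescope because (i) $\be$ is a global polynomial and hence continuous across each $v_{j+\f{1}{2}}$, so that $\be(v_{j+\f{1}{2}}^{-})=\be(v_{j+\f{1}{2}}^{+})$, and (ii) the numerical flux is single-valued there---indeed $\widehat{E_{h}\cE}[\bsrho_{f,h}]=E_{h}\cE[\bsrho_{f,h}]$, since $\cE[\bsrho_{f,h}]$ is continuous in velocity, as noted after Eq.~\eqref{eq:numericalFluxMicroMacroA}. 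The two remaining contributions at $v=v_{\min}$ and $v=v_{\max}$ then vanish by the boundary condition in Eq.~\eqref{eq:velocityBoundaryConditionMaxwellian}. Collecting the surviving terms gives the stated identity.

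I expect the proof to be essentially mechanical and structurally identical to Lemma~\ref{lem:vpBLF_Integrated}; the velocity-interface cancellation is the same mechanism seen there, with the upwind flux $\widehat{E_{h}g_{h}}$ (single-valued by construction) replaced by the continuous flux $E_{h}\cE[\bsrho_{f,h}]$. The main thing to get right---and the only genuine subtlety---is verifying that the velocity surface terms cancel under the hypotheses just stated, since an inconsistent treatment of $\cE[\bsrho_{f,h}]$ at velocity interfaces would leave spurious terms that would later spoil the balancing of terms $\mathrm{I}$ and $\mathrm{III}$ in the discrete analogue of Eq.~\eqref{eq:gMomentsChange}.
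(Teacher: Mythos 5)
Your proof is correct and takes essentially the same approach as the paper, whose entire proof reads: set $\varphi_{h}:=\psi_{h}\be$ in Eq.~\eqref{eq:microBLF}, sum over velocity elements, and apply Eq.~\eqref{eq:velocityBoundaryConditionMaxwellian}. The details you supply---pulling $\psi_{h}$ out of the velocity integrals, using $\p_{v}\be=T\be$ in the velocity volume term, and the telescoping of interior velocity surface terms via continuity of $\be$ and single-valuedness of the flux $\widehat{E_{h}\cE}[\bsrho_{f,h}]=E_{h}\cE[\bsrho_{f,h}]$---are exactly the steps the paper leaves implicit.
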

\begin{proof}
  The result is obtained by setting $\varphi_{h}:=\psi_{h}\be$ in Eq.~\eqref{eq:microBLF}, summing over velocity elements, and applying Eq.~\eqref{eq:velocityBoundaryConditionMaxwellian}.  
\end{proof}

\begin{prop}
  Suppose the DG approximation space used in the mM method consists of piecewise polynomials of degree at least two ($p\ge2$).  
  Assume that $D^{v}=\bbR$, and that $(\bsrho_{g_{h}}^{k})_{q}^{i}=0$ for all $x_{q}\in S_{x}^{i}$.  
  Then $g_{h}^{\star}$, obtained from explicit update given by Eq.~\eqref{eq:microExplicitARS111}, satisfies $(\bsrho_{g_{h}}^{\star})_{q}^{i}=0$ for all $x_{q}\in S_{i}^{x}$.
  \label{prop:zeroMicroMomentsExplicit}
\end{prop}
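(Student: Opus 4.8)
The plan is to show that every term on the right-hand side of Eq.~\eqref{eq:localConservationMicroExplicitIntermediate}, apart from the initial micro moments, cancels, so that $\sum_{r}(\psi_{h},\ell_{r}^{i})_{i}(\bsrho_{g_{h}}^{\star})_{r}^{i}=\sum_{r}(\psi_{h},\ell_{r}^{i})_{i}(\bsrho_{g_{h}}^{k})_{r}^{i}$ for every $\psi_{h}\in\bbV_{h}^{x,p}$ (the choice $\varphi_{h}=\psi_{h}\be\in\bbV_{h}^{p}$ that produced Eq.~\eqref{eq:localConservationMicroExplicitIntermediate} being legitimate precisely because $p\ge2$). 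The hypothesis $(\bsrho_{g_{h}}^{k})_{q}^{i}=0$ then makes the right side vanish, and choosing $\psi_{h}=\ell_{q}^{i}$, so that $(\ell_{q}^{i},\ell_{r}^{i})_{i}=\dx_{i}W_{q}\delta_{qr}$ by the GL quadrature (exact for the degree-$2p$ integrand), isolates $(\bsrho_{g_{h}}^{\star})_{q}^{i}=0$. The first reduction I would carry out is on the Maxwellian-moment difference (the bracketed term in Eq.~\eqref{eq:localConservationMicroExplicitIntermediate}): since $D^{v}=\bbR$ the velocity integrals are exact, and by the nodal construction in Eq.~\eqref{eq:maxwellianApproximate} (equivalently Lemma~\ref{lem:momentsOfApproximateMaxwellian}) one has $\vint{\be\cE[\bsrho_{f,h}]}_{\bbR}=\bsrho_{f,h}$, so this difference equals $(\bsrho_{f,h}^{k}-\bsrho_{f,h}^{k+1},\psi_{h})_{i}$. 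Substituting the macro update Eq.~\eqref{eq:macroExplicitARS111} then replaces it with $\dt\,B_{h}^{\macro}(\bsrho_{f,h}^{k},g_{h}^{k},E_{h}^{k},\psi_{h})_{i}$.

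The heart of the argument is to show that this macro form exactly balances the integrated Vlasov--Poisson and micro forms, i.e.\ that $B_{h}^{\macro}(\bsrho_{f,h}^{k},g_{h}^{k},E_{h}^{k},\psi_{h})_{i}$ and $\sum_{j}\big[B_{h}^{\vp}(g_{h}^{k},E_{h}^{k},\psi_{h}\be)_{ij}+B_{h}^{\micro}(\bsrho_{f,h}^{k},E_{h}^{k},\psi_{h}\be)_{ij}\big]$ differ only by a term that vanishes under the hypothesis. I would expand $B_{h}^{\macro}$ using Eq.~\eqref{eq:macroBLF} and rewrite the two integrated forms with Lemmas~\ref{lem:vpBLF_Integrated} and \ref{lem:microBLF_Integrated}, then match term by term. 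The volume terms agree because $\bF(\bsrho_{f,h})=\vint{\be v\cE[\bsrho_{f,h}]}_{\bbR}$ (Eq.~\eqref{eq:macroFlux}) and $\bff(g_{h})=\vint{\be v g_{h}}_{\bbR}$. The surface terms agree because the macro numerical fluxes are upwind fluxes at the kinetic level: a short calculation with $\hat{v}^{\pm}=\tfrac12(v\pm|v|)$ shows that $\vint{\be\,\widehat{v\cE}[\bsrho_{f,h}]}_{\bbR}$ reproduces exactly the symmetric-plus-dissipation flux of Eq.~\eqref{eq:numercalFluxMacroMacroA}, while $\vint{\be\,\widehat{vg_{h}}}_{\bbR}=\widehat{\bff(g_{h})}$ by Eq.~\eqref{eq:numercalFluxMacroMicro}; both rely on evaluating the velocity integrals exactly.

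The remaining electric-field terms are the crucial place where the hypothesis enters. After the integration by parts each force term contributes $\p_{v}\be=T\be$, so Lemma~\ref{lem:microBLF_Integrated} yields $-(E_{h}^{k}T\vint{\be\cE[\bsrho_{f,h}^{k}]}_{\bbR},\psi_{h})_{i}$, equal to the macro term $-(E_{h}^{k}T\bsrho_{f,h}^{k},\psi_{h})_{i}$ by the same exact-moment identity, while Lemma~\ref{lem:vpBLF_Integrated} leaves the extra contribution $-(E_{h}^{k}T\vint{\be g_{h}^{k}}_{\bbR},\psi_{h})_{i}=-(E_{h}^{k}T\bsrho_{g_{h}}^{k},\psi_{h})_{i}$, which vanishes because $(\bsrho_{g_{h}}^{k})_{q}^{i}=0$ makes $\bsrho_{g_{h}}^{k}$ the zero polynomial on $I_{i}^{x}$. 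Collecting these cancellations shows the bracket reduces to $\dt\,(E_{h}^{k}T\bsrho_{g_{h}}^{k},\psi_{h})_{i}=0$, which closes the reduction described above. I expect the main obstacle to be the surface-flux matching in the second step: verifying that the analytically integrated Maxwellian dissipation in Eq.~\eqref{eq:numercalFluxMacroMacroA} is \emph{identical} to the velocity moment of the kinetic upwind flux requires the velocity integrals to be exact, which is exactly the Maxwellian-moment consistency emphasized in the remarks and the reason the constraint-preservation can fail on a truncated velocity domain.
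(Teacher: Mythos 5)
Your proposal is correct and follows essentially the same route as the paper's proof: reduce the Maxwellian-moment difference via Lemma~\ref{lem:momentsOfApproximateMaxwellian} and the macro update Eq.~\eqref{eq:macroExplicitARS111}, then cancel the macro form against the velocity-integrated Vlasov and micro forms term by term (fluxes, volume terms, and electric-field terms), with the hypothesis $(\bsrho_{g_{h}}^{k})_{q}^{i}=0$ killing the residual $-(E_{h}^{k}T\bsrho_{g_{h}}^{k},\psi_{h})_{i}$ contribution --- exactly the pairing I--VIII in Eq.~\eqref{eq:GammaElaborate}. The only cosmetic difference is that you keep a general $\psi_{h}$ until the end before specializing to $\ell_{q}^{i}$, whereas the paper specializes earlier; the substance is identical.
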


\begin{proof}
  Setting $\varphi_{h}:=\psi_{h}\be$ in Eq.~\eqref{eq:microExplicitARS111}, where $\psi_{h}\in\bbV_{h}^{x,p}$, results in Eq.~\eqref{eq:localConservationMicroExplicitIntermediate}.  
  Set $\psi_{h}:=\ell_{q}^{i}$ in Eq.~\eqref{eq:localConservationMicroExplicitIntermediate} to obtain the equation for the pointwise moments of $g_{h}^{\star}$
  \begin{align}
    \dx_{i}W_{q}(\bsrho_{g_{h}}^{\star})_{q}^{i}
    &=\dx_{i}W_{q}(\bsrho_{g_{h}}^{k})_{q}^{i} + \big[\,(\vint{\be\cE[\bsrho_{f,h}^{k}]}_{D^{v}},\ell_{q}^{i})_{i}-(\vint{\be\cE[\bsrho_{f,h}^{k+1}]}_{D^{v}},\ell_{q}^{i})_{i}\,\big] \nonumber \\
    &\hspace{12pt}
    -\dt \sum_{j=1}^{N^{v}}\big[\, B_{h}^{\vp}(g_{h}^{k},E_{h}^{k},\ell_{q}^{i}\be)_{ij} + B_{h}^{\micro}(\bsrho_{f,h}^{k},E_{h}^{k},\ell_{q}^{i}\be)_{ij}\,\big].  
    \label{eq:localConservationMicroExplicitPenultimate}
  \end{align}
  Set  $\psi_{h}:=\ell_{q}^{i}$ in Eq.~\eqref{eq:macroExplicitARS111} and use Lemma~\ref{lem:momentsOfApproximateMaxwellian} to replace the left-hand side.  
  The result is
  \begin{equation}
    \big[\,(\vint{\be\cE[\bsrho_{f,h}^{k}]}_{D^{v}},\ell_{q}^{i})_{i}-(\vint{\be\cE[\bsrho_{f,h}^{k+1}]}_{D^{v}},\ell_{q}^{i})_{i}\,\big]
    = \dt B_{h}^{\macro}(\bsrho_{f,h}^{k},g_{h}^{k},E_{h}^{k},\ell_{q}^{i})_{i}.  
    \label{eq:macroExplicitARS111Intermediate}
  \end{equation}
  We then write Eq.~\eqref{eq:localConservationMicroExplicitPenultimate}, after inserting Eq.~\eqref{eq:macroExplicitARS111Intermediate}, as
  \begin{equation}
    \dx_{i}W_{q}[(\bsrho_{g_{h}}^{\star})_{q}^{i}-(\bsrho_{g_{h}}^{k})_{q}^{i}]/\dt
    =\Gamma(g_{h}^{k},\bsrho_{f,h}^{k},E_{h}^{k},\ell_{i}^{q})_{i},
    \label{eq:nodalConservationMicroExplicit}
  \end{equation}
  where, for $\psi_{h}\in\bbV_{h}^{x,p}$, we have defined
  \begin{align}
    &\Gamma(g_{h},\bsrho_{f,h},E_{h},\psi_{h})_{i} \nonumber \\
    &=B_{h}^{\macro}(\bsrho_{f,h},g_{h},E_{h},\psi_{h})_{i}
    -\sum_{j=1}^{N^{v}}\big[\, B_{h}^{\vp}(g_{h},E_{h},\psi_{h}\be)_{ij} + B_{h}^{\micro}(\bsrho_{f,h},E_{h},\psi_{h}\be)_{ij}\,\big].  
    \label{eq:GammaDefinition}
  \end{align}
  It remains to show that $\Gamma(g_{h},\bsrho_{f,h},E_{h},\psi_{h})_{i}=0$.  
  Using Eq.~\eqref{eq:macroBLF}, and the results of Lemmas~\ref{lem:vpBLF_Integrated} and \ref{lem:microBLF_Integrated}, we can write
  \begin{align}
    &\Gamma(g_{h},\bsrho_{f,h},E_{h},\psi_{h})_{i} \nonumber \\
    &=
    \big[
      \underbrace{
      \widehat{\bF(\bsrho_{f,h})}(x_{i+\f{1}{2}})
      -\vint{\be\,\widehat{v\cE}[\bsrho_{f,h}](x_{i+\f{1}{2}},v)}_{D^{v}}}_{\text{I}}
    \big]\,\psi_{h}(x_{i+\f{1}{2}}^{-}) \nonumber \\
    &\hspace{12pt}
    -\big[
      \underbrace{
      \widehat{\bF(\bsrho_{f,h})}(x_{i-\f{1}{2}})
      -\vint{\be\,\widehat{v\cE}[\bsrho_{f,h}](x_{i-\f{1}{2}},v)}_{D^{v}}}_{\text{II}}
    \big]\,\psi_{h}(x_{i-\f{1}{2}}^{+}) 
    - (\,\underbrace{\bF(\bsrho_{f,h})-\vint{\be v\cE[\bsrho_{f,h}]}_{D^{v}}}_{\text{III}},\,\p_{x}\psi_{h}\,)_{i}
    \nonumber \\
    &\hspace{12pt}
    +\big[
      \underbrace{
      \widehat{\bff(g_{h})}(x_{i+\f{1}{2}})
      -\vint{\be\,\widehat{vg_{h}}(x_{i+\f{1}{2}},v)}_{D^{v}}}_{\text{IV}}
    \big]\,\psi_{h}(x_{i+\f{1}{2}}^{-}) \nonumber \\
    &\hspace{12pt}
    -\big[
      \underbrace{
      \widehat{\bff(g_{h})}(x_{i-\f{1}{2}})
      - \vint{\be\,\widehat{vg_{h}}(x_{i-\f{1}{2}},v)}_{D^{v}}}_{\text{V}}
    \big]\,\psi_{h}(x_{i-\f{1}{2}}^{+}) 
    - (\,\underbrace{\bff(g_{h})-\vint{\be vg_{h}}_{D^{v}}}_{\text{VI}},\,\p_{x}\psi_{h}\,)_{i} \nonumber \\
    &\hspace{12pt}
    - (\,E_{h}T\,[\,\underbrace{\bsrho_{f,h}-\vint{\be\cE[\bsrho_{f,h}]}_{D^{v}}}_{\text{VII}}\,],\,\psi_{h}\,)_{i}
    + (\,\underbrace{E_{h}T\vint{\be g_{h}}_{D^{v}}}_{\text{VIII}},\,\psi_{h}\,)_{i} = 0.
    \label{eq:GammaElaborate}
  \end{align}
  In Eq.~\eqref{eq:GammaElaborate}, terms emanating from the discretized macro model have been paired with terms emanating from the discretized micro model (terms I-VII, where in each term the leading expression emanates from the discretized macro model), and these have been designed to cancel individually in order to prove the proposition.  
  Specifically, the definition of the numerical flux for the macro component in Eq.~\eqref{eq:numercalFluxMacroMacroA} --- together with the numerical flux for the micro component in Eq.~\eqref{eq:numericalFluxMicroMacroA} --- ensures that terms I and II vanish.  
  Exact evaluation of the velocity integrals $\vint{\be v\cE[\bsrho_{f,h}]}_{D^{v}}$, which emanate from the term containing $\p_{x}\varphi_{h}$ in Eq.~\eqref{eq:microBLF}, ensures that term III vanishes.  
  The definition of the numerical flux in Eq.~\eqref{eq:numercalFluxMacroMicro} ensures that terms IV and V vanish, while term~VI is zero because $\bff(g_{h}):=\vint{\be vg_{h}}_{D^{v}}$ in Eq.~\eqref{eq:macroBLF}.  Eq.~\eqref{eq:momentsOfApproximateMaxwellian} in Lemma \ref{lem:momentsOfApproximateMaxwellian} ensures that term VII vanishes.  
 By the assumption $(\bsrho_{g_{h}}^{k})_{q}^{i}=0$, term VIII vanishes when $g_h = g_h^k$.  
 Since all the terms vanish, it follows that $(\bsrho_{g_{h}}^{\star})_{q}^{i}=(\bsrho_{g_{h}}^{k})_{q}^{i}=0$ for all $x_{q}\in S_{i}^{x}$.  
\end{proof}

\begin{rem}
  Eq.~\eqref{eq:nodalConservationMicroExplicit} is a discrete analogue of Eq.~\eqref{eq:gMomentsChange}.  
  The proof of Proposition~\ref{prop:zeroMicroMomentsExplicit} illustrates that the discretization of the macro and micro components must be designed in a cohesive fashion (i.e., the terms in the discretization of the two components are consistent), so that the right-hand side of Eq.~\eqref{eq:nodalConservationMicroExplicit} vanishes and the moments of $g_{h}$ are preserved.  
  \label{rem:consistentDiscretization}
\end{rem}

\begin{rem}
  Proposition~\ref{prop:zeroMicroMomentsExplicit} assumes that $D^{v}=\bbR$.  
  However, in practical applications $D^{v}$ is a bounded domain.  
  Thus in practice many of the terms in Eq.~\eqref{eq:GammaElaborate} do not exactly cancel; specifically terms {\rm I}, {\rm II}, {\rm III}, and {\rm VII}.  
  The residuals can be made arbitrarily small by extending the velocity domain $D^{v}$, but such an approach may not be practical because the time step restriction for explicit integration scales as $1/\max(|v_{\min}|,|v_{\max}|)$ (see Eq.~\eqref{eq:cflTimeStep}).  
  The truncated velocity domain gives rise to additional conservation errors for the moments of $g_{h}$.  
  The main reason for introducing the cleaning limiter in Section~\ref{sec:cleaningLimiter} is to rectify these small non-zero contributions to $\vint{\be g_{h}}_{D^{v}}$ that emanate from the explicit step.  
  Alternatively, since the velocity integrals in terms {\rm I}, {\rm II}, {\rm III}, and {\rm VII} involve the Maxwellian, which we evaluate analytically, we have found that these terms can be made to vanish exactly by letting the first and last velocity element be artificially extended for terms involving the Maxwellian in Eq.~\eqref{eq:microSemiDiscrete}.  
  Specifically, by replacing $I_{j}^{v}$ in the last two terms on the left-hand side of Eq.~\eqref{eq:microSemiDiscrete} with
  \begin{equation}
    \tilde{I}_{j}^{v} = 
    \left\{\begin{array}{ll}
      \left(-\infty,\,v_{\min}+\dv_{j}\right] & \text{if } j = 1, \\
      \left[v_{\max}-\dv_{j},\,+\infty\right) & \text{if } j = N^{v},\\
      I_{j}^{v} & \text{otherwise.} 
    \end{array} \right.
    \label{eq:infiniteVelocityDomain}
  \end{equation}
  \label{rem:microMomentsConservationErrors}
\end{rem}

Next, we consider the implicit update in the mM method given by Eq.~\eqref{eq:microImplicitARS111}.  

\begin{prop}
  Suppose the DG approximation space used in the mM method consists of polynomials of degree at least two ($p\ge2$).  
  Let the spatial integrals in the LB form in Eq.~\eqref{eq:lbBLF} be evaluated using $(p+1)$-point GL quadrature, and assume that $g_{h}^{\star}$ satisfies $(\bsrho_{g_{h}}^{\star})_{q}^{i}=0$ for all $x_{q}\in S_{i}^{x}$, so that Eq.~\eqref{eq:gConstraintsWeak} holds.  
  Then $(\bsrho_{g_{h}}^{k+1})_{q}^{i} = 0$.
\end{prop}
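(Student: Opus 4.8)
The plan is to mirror the argument for the direct method in Proposition~\ref{prop:implicitConservation_Direct}, since the implicit update Eq.~\eqref{eq:microImplicitARS111} is structurally identical to Eq.~\eqref{eq:directImplicitArs111}, with $f_h$ replaced by $g_h$. First I would set $\varphi_h := \ell_q^i\be$ in Eq.~\eqref{eq:microImplicitARS111} and sum over the velocity elements $j=1,\ldots,N^v$. Applying Lemma~\ref{lem:fVelocityMoments} (with $f_h$ replaced by $g_h$ and $\psi_h=\ell_q^i$) to the two mass-matrix terms collapses them to $\dx_i W_q (\bsrho_{g_h}^{k+1})_q^i$ and $\dx_i W_q (\bsrho_{g_h}^{\star})_q^i$, and the latter vanishes by hypothesis. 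For the collision term I would invoke the $g_h$-analogue of Lemma~\ref{lem:lbBLF_Integrated}: its proof is purely mechanical (integration by parts twice, $\p_v\be=T\be$, $\p_{vv}\be=TT\be$, and the boundary conditions of Eqs.~\eqref{eq:lbBoundaryCondition_1}--\eqref{eq:lbBoundaryCondition_2} imposed on $g_h,\mathfrak{g}_h$), so it carries over verbatim, except that the drift $w_{f,h}=u_{f,h}-v$ and temperature $\theta_{f,h}$ are now those determined by the \emph{macro} moments $\bsrho_{f,h}^{k+1}$ (already known from Eq.~\eqref{eq:macroExplicitARS111}), while the distribution inside the moments is $g_h^{k+1}$. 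Evaluating the resulting spatial integrals with $(p+1)$-point GL quadrature, whose nodes coincide with the interpolation points so that $\ell_q^i(x_m)=\delta_{qm}$, collapses the quadrature sum to a single node and yields a per-node relation analogous to Eq.~\eqref{eq:localConservationConstraintDirectImplicit}:
\begin{equation}
  (\bsrho_{g_h}^{k+1})_q^i = (\bsrho_{g_h}^{\star})_q^i + \dt\nu\big[\,T\vint{\be\,w_q\,g_q}_{D^v} + \theta_q\,TT\vint{\be\,g_q}_{D^v}\,\big],
\end{equation}
where $w_q=u_q-v$, and $u_q,\theta_q$ come from $\bsrho_{f,h}^{k+1}(x_q)$ while $g_q=g_h^{k+1}(x_q,\cdot)$.

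The main obstacle — and the essential difference from the direct method — is that this bracket does \emph{not} vanish identically. In Proposition~\ref{prop:implicitConservation_Direct} the bracket canceled because $u_m,\theta_m$ were the self-consistent moments of $f_m$ itself (so $\vint{v f_m}=n_m u_m$, etc.); here $u_q,\theta_q$ are prescribed by $\bsrho_{f,h}^{k+1}$ and bear no such relation to the moments of $g_q$. The resolution is to observe that the bracket is nonetheless an exactly computable \emph{linear} function of the nodal micro-moments: since $g_q$ is a velocity polynomial, $\vint{g_q}_{D^v}$, $\vint{v g_q}_{D^v}$, $\vint{v^2 g_q}_{D^v}=2\rho_2$ are just the components $\rho_0,\rho_1,\rho_2$ of $(\bsrho_{g_h}^{k+1})_q^i$ (the relevant velocity integrals in the LB form are degree $\le 2p+1$ and hence integrated exactly by GL quadrature). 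Writing out the components using $T\be=(0,1,v)^{\rm T}$ and $TT\be=(0,0,1)^{\rm T}$ gives the bracket as $K\,(\bsrho_{g_h}^{k+1})_q^i$ with the lower-triangular matrix
\begin{equation}
  K = \left(\begin{array}{ccc} 0 & 0 & 0 \\ u_q & -1 & 0 \\ \theta_q & u_q & -2 \end{array}\right).
\end{equation}

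Finally, since $(\bsrho_{g_h}^{\star})_q^i=0$ by assumption, the per-node relation becomes the homogeneous linear system $(\,I-\dt\nu K\,)(\bsrho_{g_h}^{k+1})_q^i=0$. The matrix $I-\dt\nu K$ is lower triangular with diagonal entries $(1,\,1+\dt\nu,\,1+2\dt\nu)$, all strictly positive for $\nu>0$, hence invertible; therefore $(\bsrho_{g_h}^{k+1})_q^i=0$ for every $x_q\in S_i^x$, which establishes the claim (and, via Eq.~\eqref{eq:gConstraintsWeak}, the weak constraint as well). I would close by noting that this triangular-invertibility step is precisely what replaces the identical cancellation of the direct method, and that it is the implicitness of the LB update, combined with the vanishing of $\bsrho_{g_h}^{\star}$, that drives the micro-moments to zero rather than merely preserving them.
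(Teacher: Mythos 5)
Your proof is correct and follows essentially the same route as the paper: test with $\ell_{q}^{i}\be$, reduce via Lemma~\ref{lem:lbBLF_Integrated} and GL quadrature to the nodal relation, express the collision bracket as a lower-triangular matrix acting on $(\bsrho_{g_{h}}^{k+1})_{q}^{i}$, and conclude by invertibility of $I-\dt\nu M$ (your $K$ is exactly the paper's $M$). Your added observation---that invertibility replaces the identical cancellation of the direct method because $u_{q},\theta_{q}$ come from the macro moments rather than from $g_{q}$ itself---is precisely the distinction underlying the paper's argument.
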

\begin{proof}
  Letting $\varphi_{h}=\ell_{q}^{i}\be$ in Eq.~\eqref{eq:microImplicitARS111} and invoking Lemma~\ref{lem:lbBLF_Integrated}, with spatial integrals in Eq.~\eqref{eq:lbBLF_Integrated} evaluated using $(p+1)$-point GL quadrature, gives (cf. Eq.~\eqref{eq:localConservationConstraintDirectImplicit})
  \begin{align}
    (\bsrho_{g_{h}}^{k+1})_{q}^{i}
    &=(\bsrho_{g_{h}}^{\star})_{q}^{i} 
    	+ \dt\nu\,\big[\,T\vint{\be w_{q}^{k+1}g_{q}^{k+1}}_{D^{v}} 
    	+ \theta_{q}^{k+1}TT\vint{\be g_{q}^{k+1}}_{D^{v}}\,\big],
    \label{eq:localConservationMicroImplicit}
  \end{align}
  where $g_{q}(v)=g_{h}(v,x_{q})$, $w_{q}=u_{q}-v$, and $u_{q}=u_{f,h}(x_{q})$ and $\theta_{q}=\theta_{f,h}(x_{q})$.  
  
  A direct calculation gives
  \begin{equation}
    T\vint{\be w_{q}^{k+1} g_{q}^{k+1}}_{D^{v}}  
    = \begin{pmatrix}
         0 & 0 & 0 \\
  	 u_{q}^{k+1} & - 1 & 0  \\
  	 0 & u_{q}^{k+1} & - 2
    \end{pmatrix}(\bsrho_{g_{h}}^{k+1})_{q}^{i}
  \end{equation}
  and
  \begin{equation}
    \theta_{q}^{k+1}TT\vint{\be g_{q}^{k+1}}_{D^{v}}
    = \begin{pmatrix}
         0 & 0 & 0 \\
  	 0 & 0 & 0  \\
  	 \theta_{q}^{k+1} & 0 & 0
    \end{pmatrix}(\bsrho_{g_{h}}^{k+1})_{q}^{i}.  
  \end{equation}
  Thus \eqref{eq:localConservationMicroImplicit} can be written as 
  \begin{equation}
    (\bsrho_{g_{h}}^{k+1})_{q}^{i} = \big(\,I - \dt \nu M\,\big)^{-1} (\bsrho_{g_{h}}^{\star})_{q}^{i},
  \end{equation}
  where 
  \begin{equation}
    M = \begin{pmatrix}
		0 & 0 & 0 \\
		u_{q}^{k+1} & - 1 & 0 \\
		\theta_{q}^{k+1} & u_{q}^{k+1} & - 2
	\end{pmatrix},
  \end{equation}
  and $\big(\,I - \dt \nu M\,\big)$ is invertible for $\dt \nu \geq0$.  
  Therefore, $(\bsrho_{g_{h}}^{\star})_{q}^{i}=0$ implies $(\bsrho_{g_{h}}^{k+1})_{q}^{i}=0$.  
\end{proof}

\section{Cleaning Limiter}
\label{sec:cleaningLimiter}

As noted in Remark~\ref{rem:microMomentsConservationErrors}, the explicit update of the micro distribution given by Eq.~\eqref{eq:microExplicitARS111}, and consequently the update in Eq.~\eqref{eq:imexMicroExplicit}, can, due to finite velocity domain effects, give rise to small violations of the conservation constraints in Eq.~\eqref{eq:gConstraints}.  
To eliminate these violations, we introduce a `cleaning' limiter, to be applied to $g_{h}$ before the implicit solves in the IMEX scheme in Eq.~\eqref{eq:imexMicroImplicit}.  

In the  nodal DG scheme the global representation of $g_{h}$ in velocity space can, for arbitrary $x\in D^{x}$, be written as
\begin{equation}
  g_{h}(x,v)=\sum_{j=1}^{N^{v}}\chi_{I_{j}^{v}}(v)\sum_{k=1}^{p+1}g_{k}^{j}(x)\ell_{k}^{j}(v),
  \label{eq:nodalVelocityRepresentation}
\end{equation}
where $\chi_{I_{j}^{v}}(v)$ is the indicator function on $I_{j}^{v}$, and $g_{k}^{j}$ is the solution in the GL quadrature point $v_{k}\in S_{j}^{v}\subset I_{j}^{v}$.  
The cleaning limiter is global in velocity space, but is applied independently for each spatial point in $D^{x}$; i.e., for each $x\in S_{i}^{x}$, $i=1,\ldots,N^{x}$.  
Then, the cleaned solution $\tilde{g}_{h}\in\bbV_{h}^{p}$ is obtained by solving the linearly constrained least squares problem 
\begin{align}
  \min_{\tilde{g}_{h}}\f{1}{2}\int_{D^{v}}(\tilde{g}_{h}-g_{h})^{2}\,dv
  \quad
  \mbox{subject to} \quad \int_{D^{v}}\tilde{g}_{h}\be dv=0.  
  \label{eq:constrainedLeastSquaresCleaning}
\end{align}
(We solve Eq.~\eqref{eq:constrainedLeastSquaresCleaning} using the subroutine DGGLSE in LAPACK \cite{anderson1999lapack}, which expresses the least squares solution in terms of a generalized QR decomposition \cite{anderson1992generalized}. )  

We denote the application of the cleaning limiter to obtain $\tilde{g}_{h}$ from $g_{h}$ by solving the optimization problem in Eq.~\eqref{eq:constrainedLeastSquaresCleaning} simply by
\begin{equation}
  \tilde{g}_{h} := \Lambda\big\{g_{h}\big\}.  
\end{equation}

\section{Numerical Experiments}
\label{sec:numerical}


In this section we apply the mM method developed in the previous sections to a variety of standard test problems for the VPLB system.  
Our goal is to document the performance of the mM method under collisionality conditions ranging from kinetic ($\nu$ small) to fluid ($\nu$ large).  
The conservation properties of the mM method is a major focus.  
We also aim to compare the performance of the mM and direct methods.  
Specifically, we investigate the extent to which the mM method provides improved accuracy over the direct method in collision dominated regimes, in the sense that the dynamics is well captured by the macro component, and simulations can be performed with coarser resolution in velocity space.  

We use explicit, implicit, and IMEX time-stepping methods in the tests presented.  
We use simple backward Euler time-stepping for the relaxation problem in Section~\ref{sec:numerical_Relaxation}.  
For the Riemann problem in Section~\ref{sec:numerical_Riemann} and the collisional Landau damping problem in Section~\ref{sec:numerical_Landau}, we use the IMEX method from \cite{chu_etal_2019}, with Butcher tables given by
\begin{equation}
  \begin{array}{c | c c c}
  	\tilde{c}_{1} & \tilde{a}_{11} & \tilde{a}_{12} & \tilde{a}_{13} \\
  	\tilde{c}_{2} & \tilde{a}_{21} & \tilde{a}_{22} & \tilde{a}_{23} \\
	\tilde{c}_{3} & \tilde{a}_{31} & \tilde{a}_{32} & \tilde{a}_{33} \\ \hline
  	                   & \tilde{w}_{1} & \tilde{w}_{2} & \tilde{w}_{3}
  \end{array}
  =
  \begin{array}{c | c c c}
  	0 & 0 & 0 & 0 \\
  	1 & 1 & 0 & 0 \\ 
	1 & 0.5 & 0.5 & 0 \\ \hline
  	   & 0.5 & 0.5 & 0
  \end{array}
  \quad
    \begin{array}{c | c c c}
  	c_{1} & a_{11} & a_{12} & a_{13} \\
  	c_{2} & a_{21} & a_{22} & a_{23} \\
	c_{3} & a_{31} & a_{32} & a_{33} \\ \hline
  	         & w_{1} & w_{2} & w_{3}
  \end{array}
  =
  \begin{array}{c | c c c}
  	0 & 0 & 0 & 0 \\
  	1 & 0 & 1 & 0 \\
	1 & 0 & 0.5 & 0.5 \\ \hline
  	   & 0 & 0.5 & 0.5 
  \end{array},
  \label{eq:butcherPDARS}
\end{equation}
which is GSA and formally only first-order accurate, but is SSP with a time step restriction for stability determined solely by the explicit part, and, when $\nu=0$, it reduces to the optimal explicit SSP-RK2 scheme from \cite{shuOsher_1988}.  
For the two-stream instability problem in Section~\ref{sec:numerical_TwoStream}, we use the optimal explicit SSP-RK3 scheme from \cite{shuOsher_1988}, which in the Butcher table format used Sections~\ref{sec:imexrkDirect} and \ref{imexrkMM} takes the form
\begin{equation}
  \begin{array}{c | c c c}
  	\tilde{c}_{1} & \tilde{a}_{11} & \tilde{a}_{12} & \tilde{a}_{13} \\
  	\tilde{c}_{2} & \tilde{a}_{21} & \tilde{a}_{22} & \tilde{a}_{23} \\
	\tilde{c}_{3} & \tilde{a}_{31} & \tilde{a}_{32} & \tilde{a}_{33} \\ \hline
  	                   & \tilde{w}_{1} & \tilde{w}_{2} & \tilde{w}_{3}
  \end{array}
  =
  \begin{array}{c | c c c}
  	0 & 0 & 0 & 0 \\
  	1 & 1 & 0 & 0 \\ 
	1/2 & 1/4 & 1/4 & 0 \\ \hline
  	   & 1/6 & 1/6 & 2/3,
  \end{array}
\end{equation}
for the explicit coefficients, while all the implicit coefficients are set to zero.  

With the exception of the the relaxation problem, which is purely implicit, we let the time step be given by
\begin{equation}
  \dt = \f{C_{\rm{CFL}}}{(2\,p+1)}\times \min_{i\in\{1,\ldots,N^{x}\}}\f{\dx_{i}}{\max(|v_{\min}|,|v_{\max}|)},
  \label{eq:cflTimeStep}
\end{equation}
where, unless otherwise specified, we use $C_{\rm{CFL}}=0.75$.  
For all the tests, we use polynomial degree $p=2$, which, when combined with third-order accurate time-stepping (e.g., SSP-RK3) results in a third-order accurate method for sufficiently smooth problems.  

\subsection{Relaxation}
\label{sec:numerical_Relaxation}


Here we consider the space-homogeneous problem
\begin{equation}
  \p_{t}f = C_{\lb}[\bsrho_{f}](f), \label{eq:homogeneousDirect}
\end{equation}
with mM decomposition
\begin{subequations}
  \label{eq:homogeneousMM}
  \begin{align}
    \p_{t}\bsrho_{f} &= 0, \label{eq:homogeneousMacro} \\
    \p_{t}g &= C_{\lb}[\bsrho_{f}](g), \label{eq:homogeneousMicro}
  \end{align}
\end{subequations}
and investigate conservation properties of the DG discretization of the LB collision operator in the context of the direct discretization and the discretization of the mM model.  
We discretize the velocity domain $D^{v}=[v_{\min},v_{\max}]=[-12,12]$ with $N^{v}=48$ elements and let the initial condition be a double Maxwellian
\begin{equation}
  f(v,t=0)=f_{0}(v) := M[\bsrho_{1}](v) + M[\bsrho_{2}](v), 
\end{equation}
with
\begin{equation}
  M[\bsrho_{i}](v) = \f{n_{i}}{\sqrt{2\pi \theta_{i}}} \exp\Big\{\,-\f{(v-u_{i})^{2}}{2\theta_{i}}\,\Big\}, 
\end{equation}
where we set $\{n_{1},u_{1},\theta_{1}\}=\{1.0,-1.5,0.5\}$ and $\{n_{2},u_{2},\theta_{2}\}=\{1.0,2.5,0.5\}$, so that $\bsrho_{f,0}=\vint{f_{0}\be}=(2,1,4.75)^{\rm{T}}$.  
When solving the mM system in Eq.~\eqref{eq:homogeneousMM}, we set $g_{0}=f_{0}-M[\bsrho_{f,0}]$ and apply the cleaning limiter from Section~\ref{sec:cleaningLimiter} to the initial condition, so that $\vint{g_{0}\be}=0$.  
After the application to the initial condition, do not use the cleaning limiter during the integration of Eq.~\eqref{eq:homogeneousMM} to the final time.  
We set the collision frequency to $\nu=10^{3}$, $\dt=10^{-2}$, and evolve until $t=1.0$ with the backward Euler time-stepping method.  

\begin{figure}[H]
	\begin{centering}
	\captionsetup[subfigure]{justification=centering}
	\subfloat[Initial and final distributions]
	{\begin{minipage}{0.5\textwidth}
			\includegraphics[width=\linewidth]{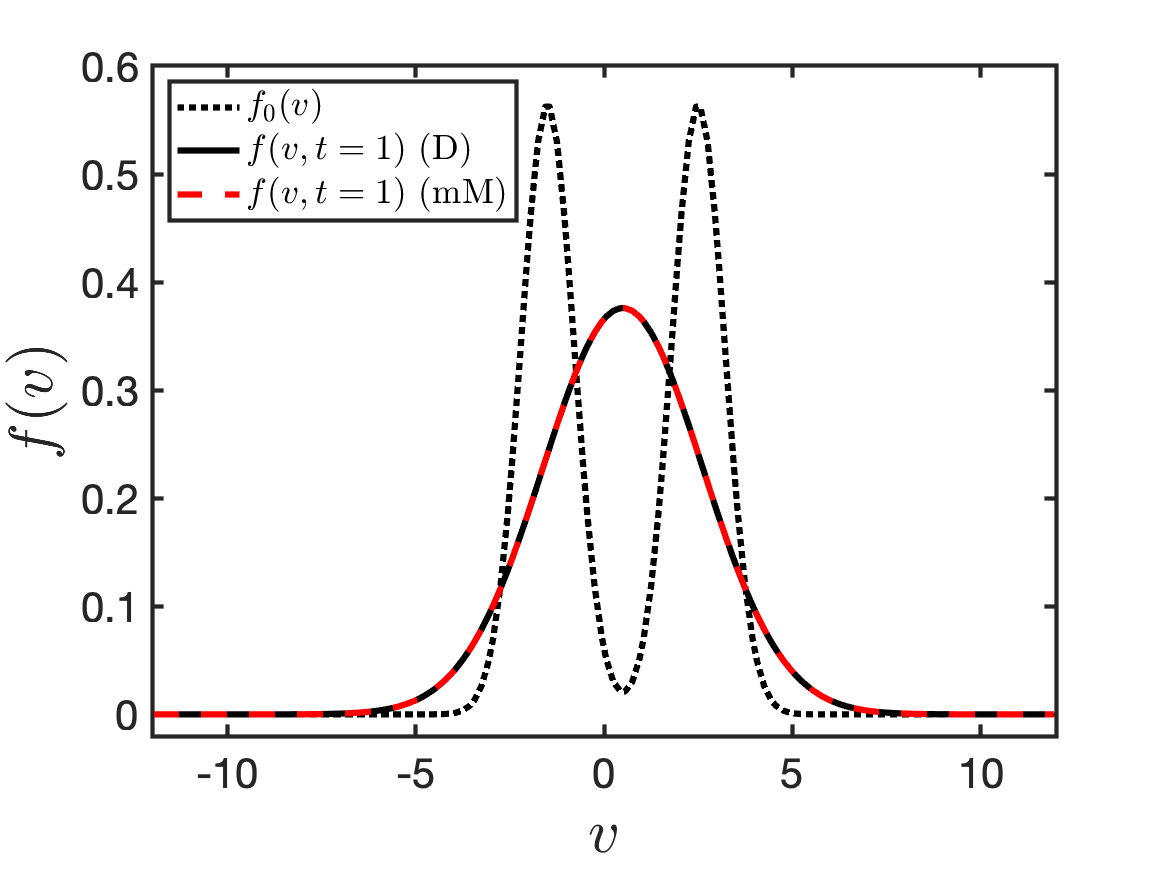}
			\label{fig:Relaxation.fInitialFinal}
		\end{minipage}
	} \\
	\subfloat[Relative change in moments of $f$ versus time]
	{\begin{minipage}{0.5\textwidth}
			\includegraphics[width=\linewidth]{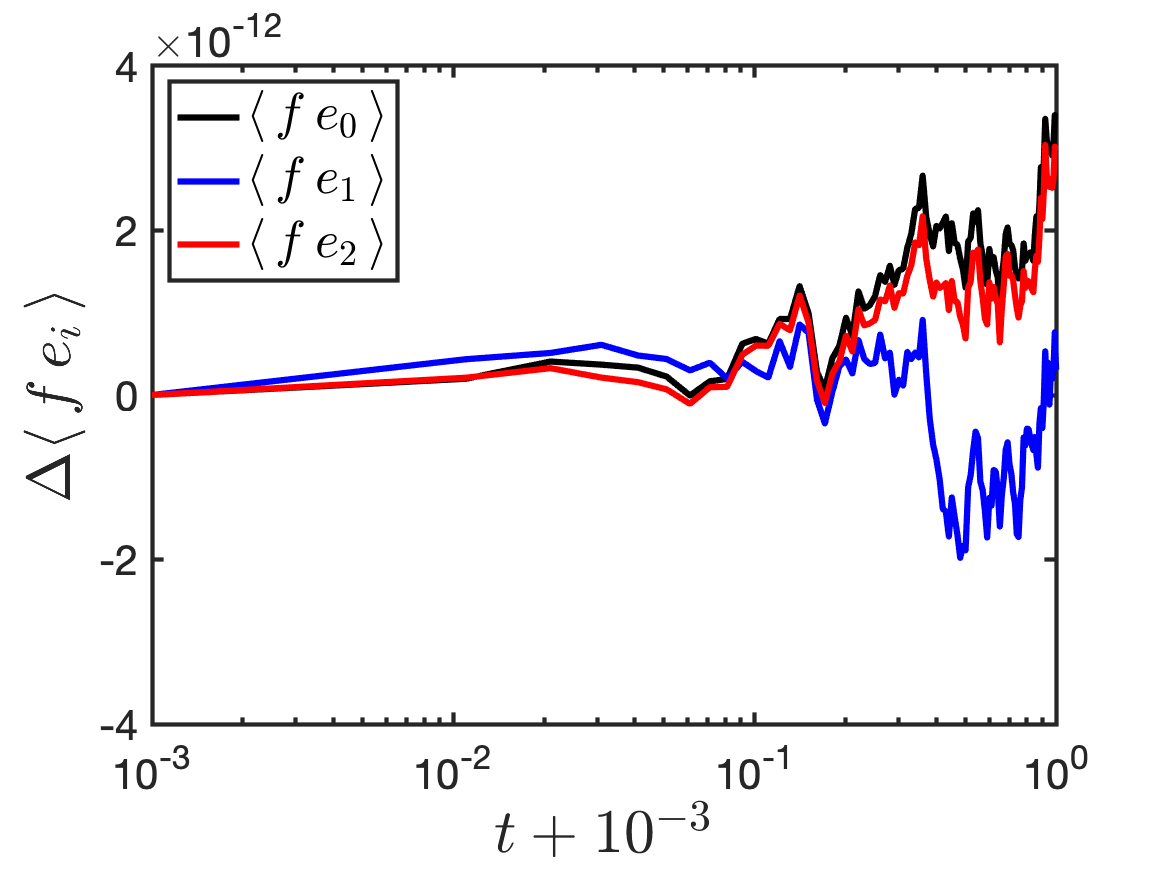}
			\label{fig:Relaxation.fMoments}
		\end{minipage}
	}
	\subfloat[Moments of $g$ versus time]
	{\begin{minipage}{0.5\textwidth}
			\includegraphics[width=\linewidth]{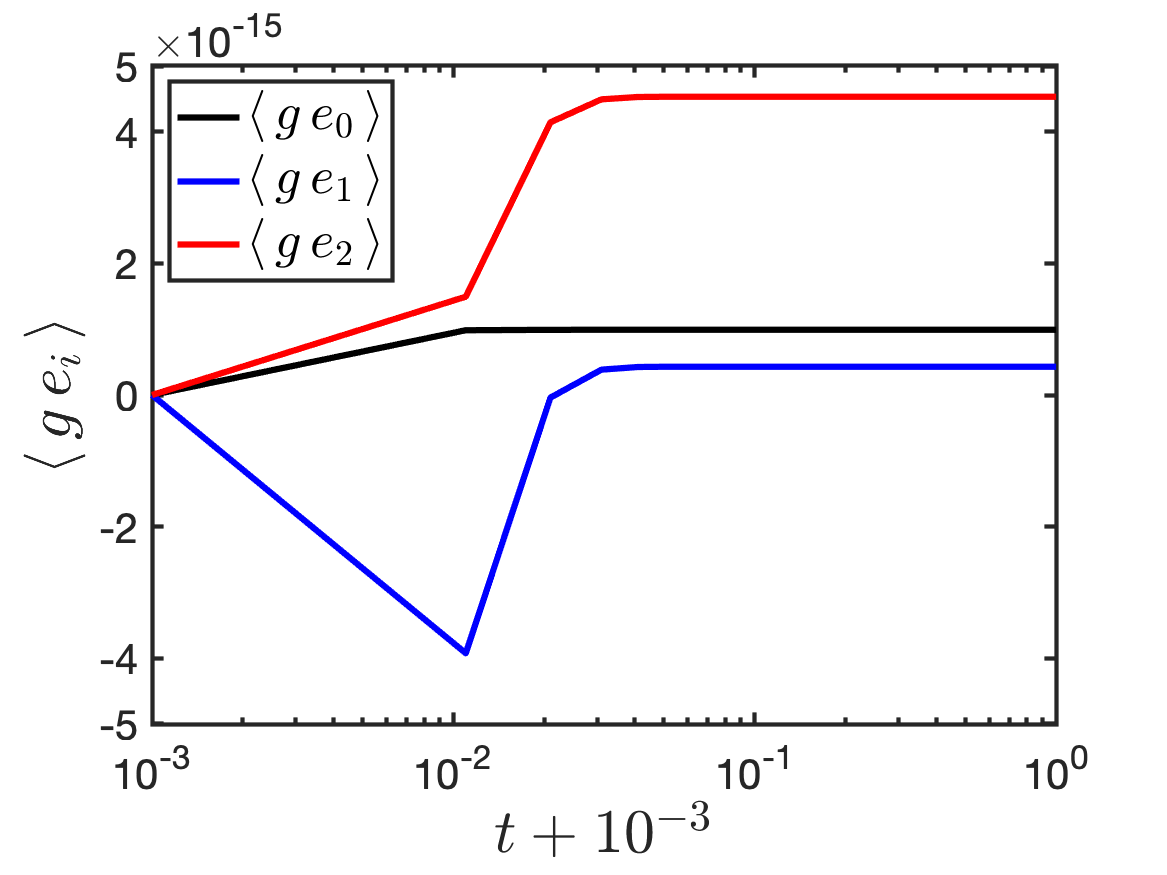}
			\label{fig:Relaxation.gMoments}
		\end{minipage}
	}
	\caption{Numerical results for the space-homogeneous problems in Eqs.~\eqref{eq:homogeneousDirect} and \eqref{eq:homogeneousMM}.  
	The top panel shows the initial distribution (dotted black line), and the final distribution obtained with the direct and mM methods (solid black and dashed red lines, respectively).  
	The lower left panel shows the time evolution of the relative change in the first three moments of the distribution function $f$ as obtained when using the direct method.  
	The lower right panel shows the time evolution of the first three moments of the micro distribution $g$ as obtained when using the mM method.  
	In the lower panels, the time axis has been offset by $10^{-3}$ to enable plotting on a logarithmic scale.}
	\label{fig:Relaxation}
	\end{centering}
\end{figure}

Figure~\ref{fig:Relaxation} shows results from solving the space-homogeneous systems in Eqs.~\eqref{eq:homogeneousDirect} and \eqref{eq:homogeneousMM}.  
The top panel shows the initial distribution, and final distributions obtained with the Direct and mM methods.  
The final distribution function, $f(v,t=1)$, is consistent with a Maxwellian evaluated with $n=2$, $u=0.5$, and $\theta=4.5$, and the direct and mM methods give practically identical results.  
The lower left panel shows the relative change in first three moments of the distribution, $\vint{f\be}$, versus time as obtained with the direct discretization of the LB collision operator.  
For all three moments, the relative change is of the order of $10^{-12}$ over the duration of the computation.  
(We find that the relative change in the moments decreases with decreasing values of the collision frequency.)  
The lower right panel shows the time evolution of the first three moments of the micro distribution, $\vint{g\be}$, obtained using the same discretization of the LB collision operator, but in the context of the mM model.  
All three components of $\vint{g\be}$ remain small (a few times $10^{-15}$) for the duration of the simulation.  
From these results, we conclude that the conservation properties of the discretized LB collision operator are satisfactory, and consistent with expectations from Section~\ref{sec:conservation}.  

\subsection{Riemann Problem}
\label{sec:numerical_Riemann}


We consider a Riemann problem in this section.  
The test involves both the collision operator and the transport operator (with the electric field set to zero), and we use the IMEX time-stepping schemes discussed in Sections~\ref{sec:imexrkDirect} and \ref{imexrkMM} for the direct and mM methods, respectively.  
Our goal is (1) to demonstrate conservation properties of the mM method, and (2) to compare the efficiency, in terms of accuracy for a given phase-space resolution, of the direct and mM methods in kinetic and fluid regimes.  

Unless stated otherwise,  the computational domain is given by $D^{x}=[-1.0,1.0]$ and $D^{v}=[-6,6]$.  
Following \cite{hakim_etal_2020}, we let the initial distribution function be given by a Maxwellian: $f(v,x,t=0)=f_{0}(v,x)=M[\bsrho](v)$, where
\begin{equation}
  (\,n,\,u,\,\theta\,)
  =\begin{cases}
    (\,1.0,\,0.0,\,1.0\,), & x \le 0 \\
    (\,0.125,\,0.0,\,0.8\,), & x> 0.
  \end{cases}
\end{equation}
These initial conditions are similar to the classical Riemann problem due to Sod \cite{sod_1978}, and in the fluid regime ($\nu\to\infty$) the solution consists of a rarefaction wave propagating to the left and a shock wave propagating to the right, followed by a contact discontinuity (also propagating to the right).  
In our numerical experiments, since the spatial boundaries are placed far enough away from the initial discontinuity at $x=0$, we use asymptotic boundary conditions at the spatial boundaries (i.e., the boundary values are given by the initial condition), while we use zero-flux conditions at the boundaries in the velocity domain.  
We evolve the Riemann problem until $t=0.1$.  

\begin{figure}[H]
	\begin{centering}
	\captionsetup[subfigure]{justification=centering}
	\subfloat[Density]
	{\begin{minipage}{0.5\textwidth}
			\includegraphics[width=\linewidth]{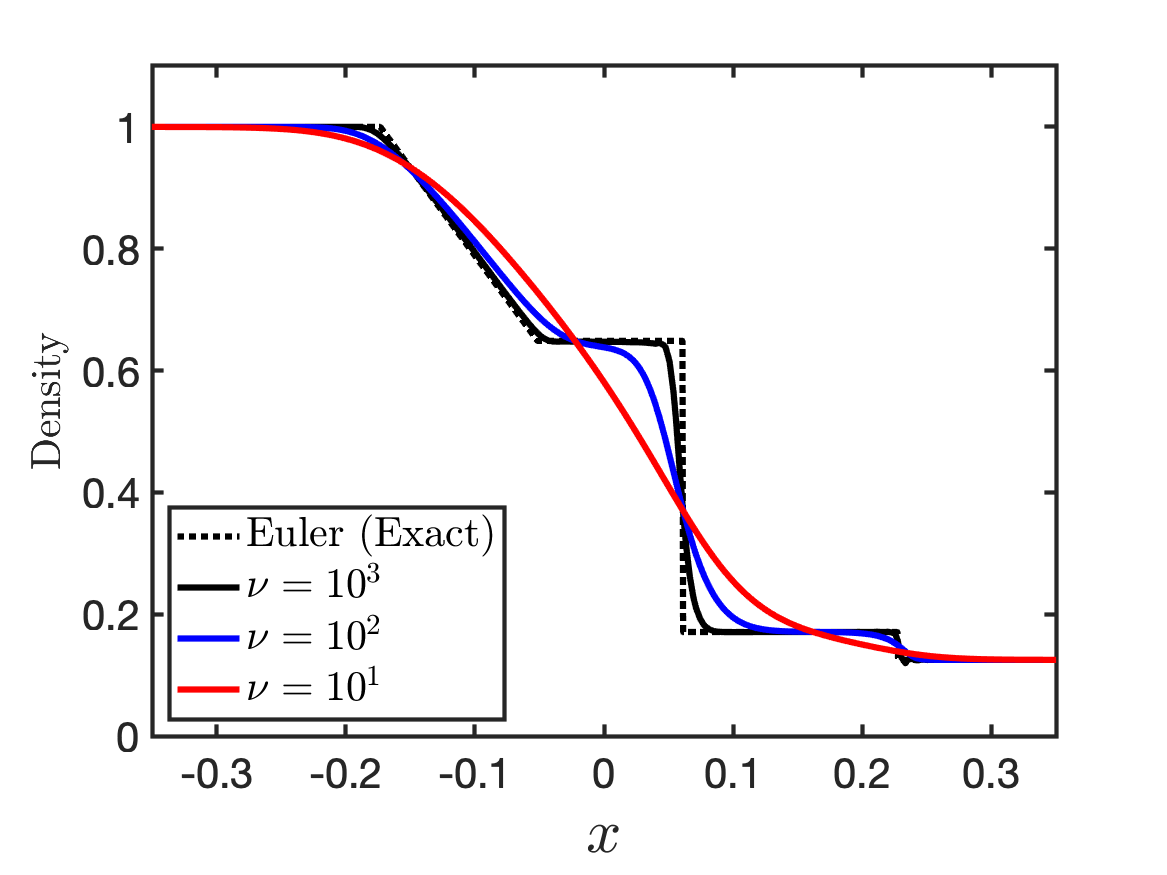}
			\label{fig:RiemannProblem_MM.Density}
		\end{minipage}
	}
	\subfloat[Velocity]
	{\begin{minipage}{0.5\textwidth}
			\includegraphics[width=\linewidth]{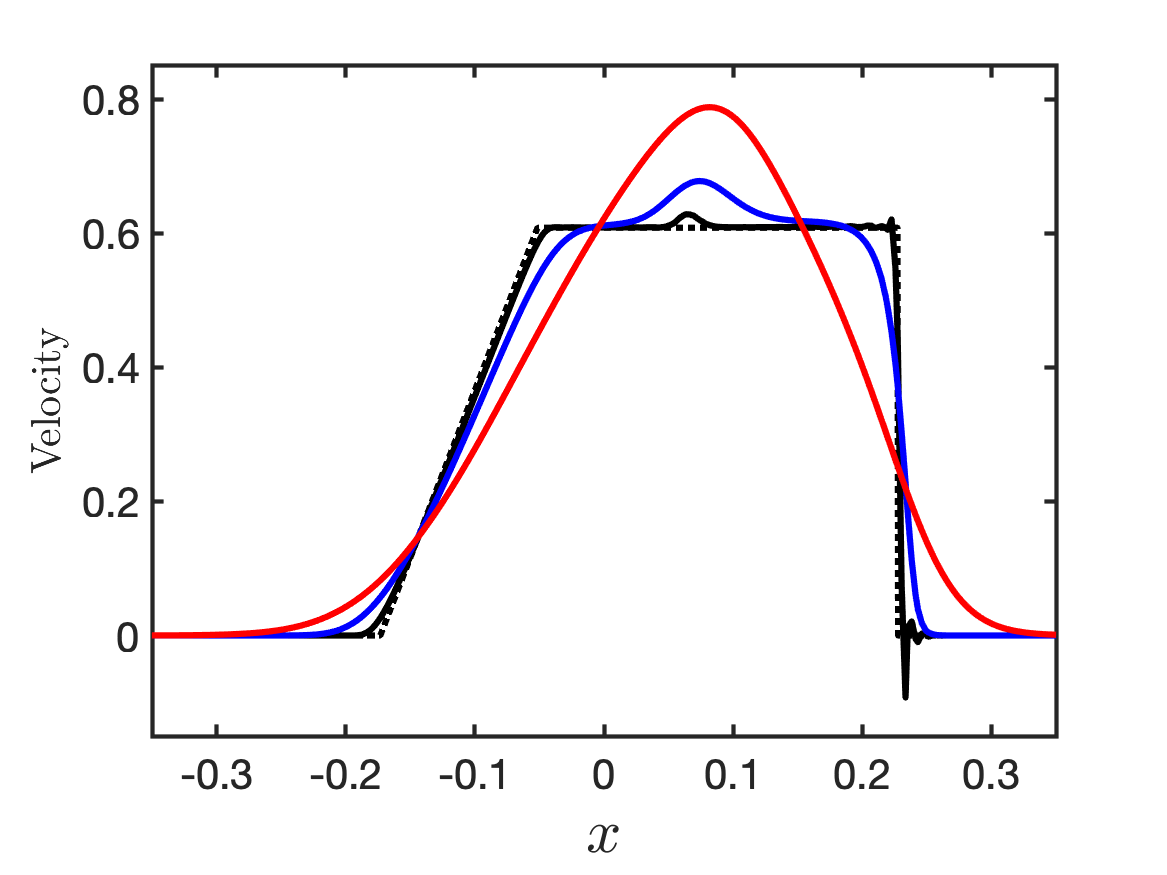}
			\label{fig:RiemannProblem_MM.Velocity}
		\end{minipage}
	} \\
	\subfloat[Temperature]
	{\begin{minipage}{0.5\textwidth}
			\includegraphics[width=\linewidth]{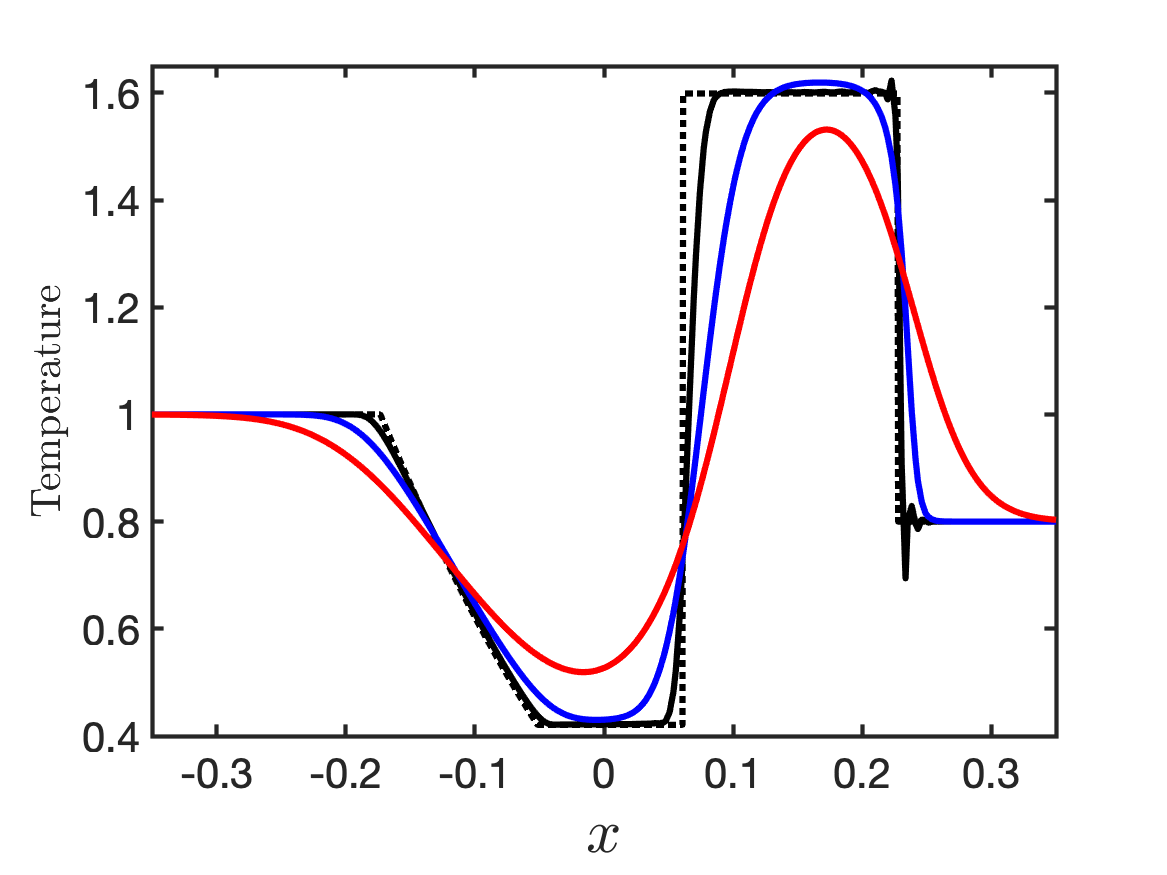}
			\label{fig:RiemannProblem_MM.Temperature}
		\end{minipage}
	}
	\subfloat[Conservation ($\nu=10^{3}$)]
	{\begin{minipage}{0.5\textwidth}
			\includegraphics[width=\linewidth]{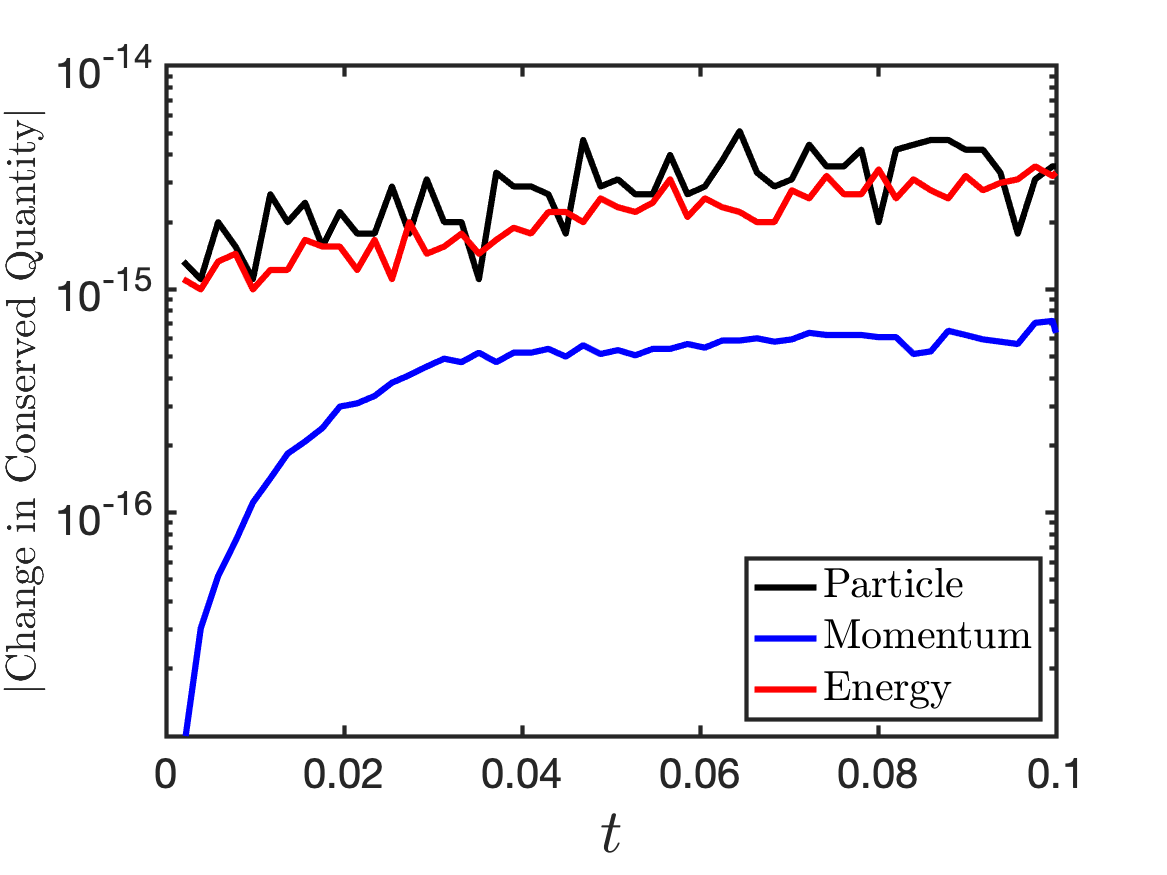}
			\label{fig:RiemannProblem_MM.conservation}
		\end{minipage}
	}
	\caption{Numerical results for the Riemann problem for various values of the collision frequency $\nu$, obtained with the mM method using $N^{x}\times N^{v}=256\times16$.  The density, velocity, and temperature at $t=0.1$ are plotted versus position $x$ in panels (a), (b), and (c), respectively.  In each of these panels, we plot results for $\nu=10^{1}$ (solid red), $\nu=10^{2}$ (solid blue), and $\nu=10^{3}$ (solid black).  For reference, we also plot the exact solution to the Riemann problem in the inviscid (Euler) limit ($\nu\to\infty$; dotted black).  In panel (d) the absolute change in particle number (black), momentum (blue), and energy (red) are plotted versus time for the case with $\nu=10^{3}$.}
	\label{fig:RiemannProblem_MM}
	\end{centering}
\end{figure}

Figure~\ref{fig:RiemannProblem_MM} shows results obtained with the mM method for various values of the collision frequency, using $N^{x}=256$ and $N^{v}=16$.  
These results show that the solutions obtained with the mM method tend to the inviscid Euler solution (dotted black line) as the collision frequency increases.  
For $\nu=10^{3}$, the mM solution is quite close to the Euler solution.  
We also note that oscillations are present in the solution around the shock ($x\approx0.23$) when $\nu=10^{3}$, as can be expected when limiters designed to suppress such oscillations, e.g., local projection limiters \cite{cockburn_etal_1989}, are not applied.  
However, the cleaning limiter from Section~\ref{sec:cleaningLimiter} is applied so that the integrated moments of the micro distribution $\int_{D^{x}}\vint{\be g}_{D^{v}}dx$ remain zero (to machine precision).  
Then, as seen in the lower right panel of Figure~\ref{fig:RiemannProblem_MM}, for $\nu=10^{3}$, the change in particle number, momentum, and energy are also at the level of machine precision.  
(We find similar results for the other values of $\nu$.)  

\begin{figure}[H]
	\begin{centering}
	\captionsetup[subfigure]{justification=centering}
	\subfloat[Difference between direct and mM methods ($\nu=10^{1}$)]
	{\begin{minipage}{0.5\textwidth}
			\includegraphics[width=\linewidth]{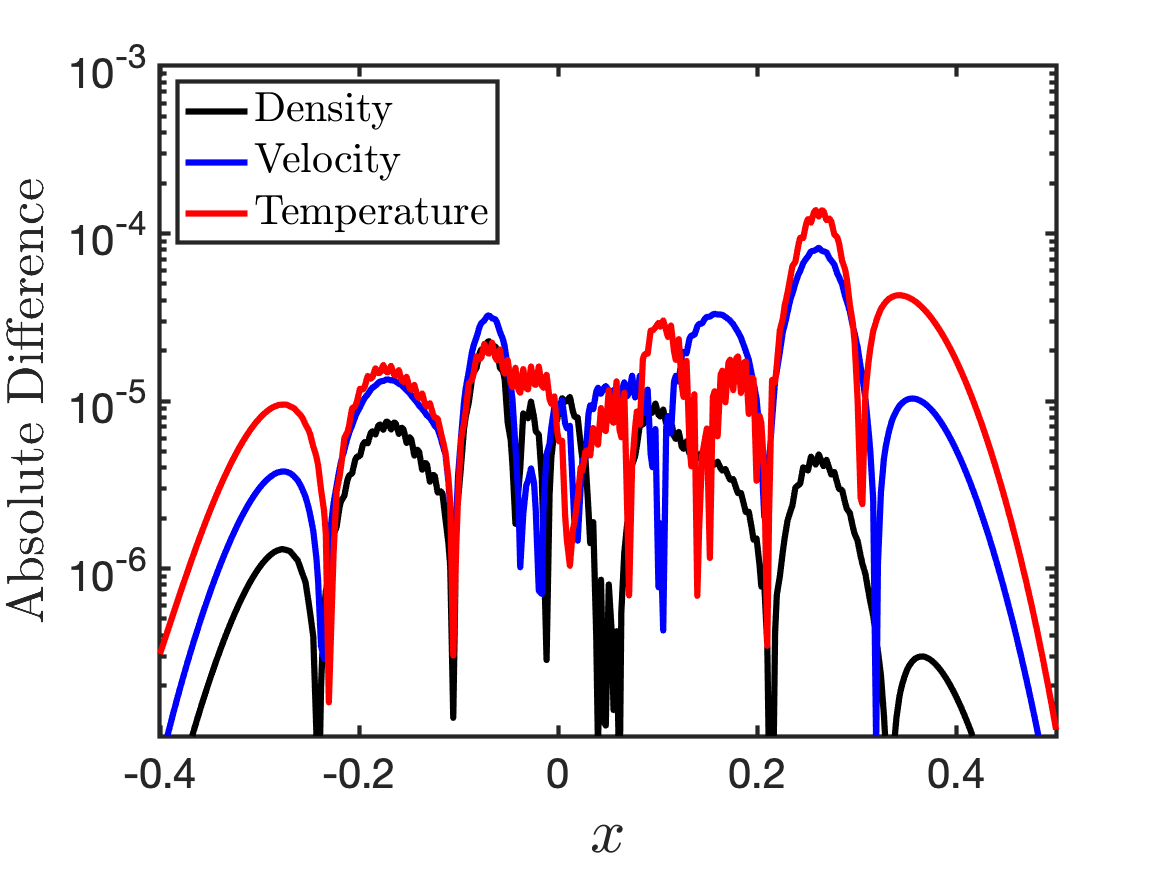}
			\label{fig:RiemannProblem_Direct_mM_Nu_1e1.Comparison}
		\end{minipage}
	}
	\subfloat[Difference between direct and mM methods ($\nu=10^{3}$)]
	{\begin{minipage}{0.5\textwidth}
			\includegraphics[width=\linewidth]{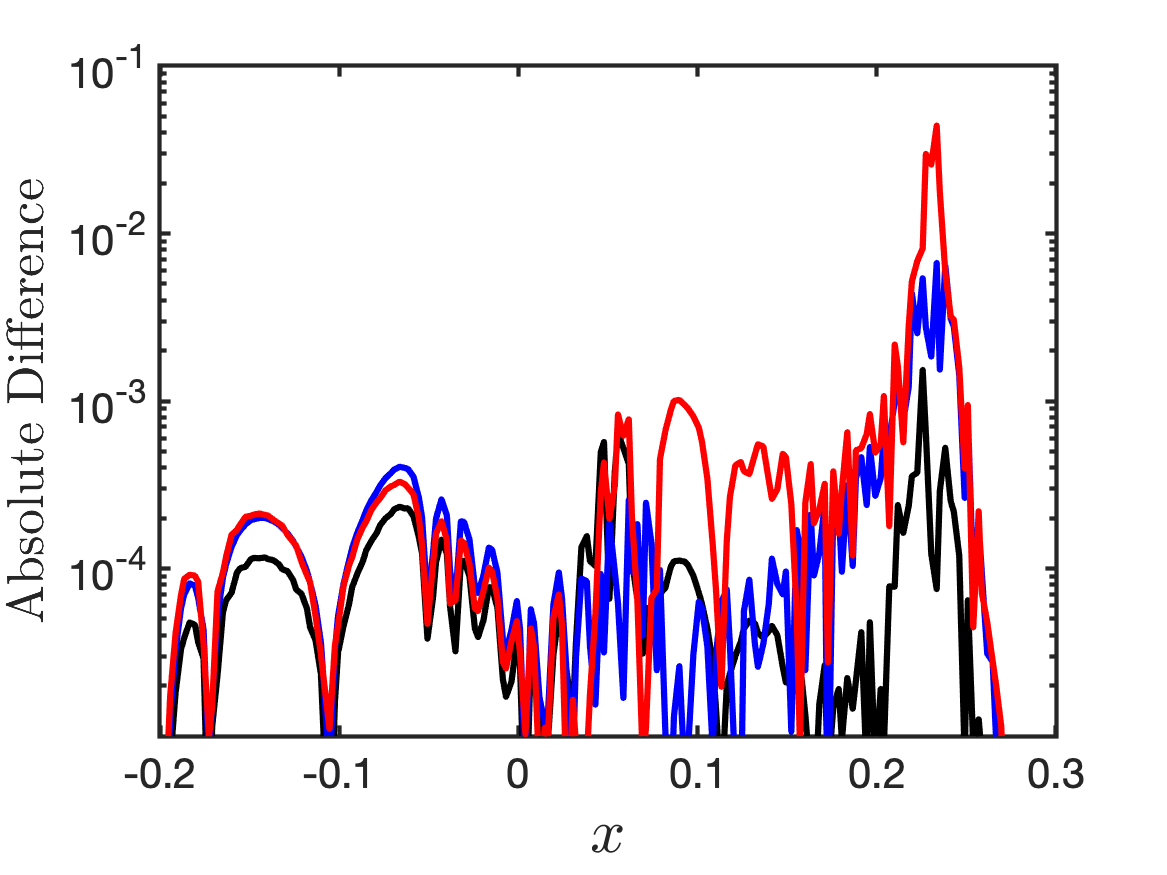}
			\label{fig:RiemannProblem_Direct_mM_Nu_1e3.Comparison}
		\end{minipage}
	}
	\caption{Numerical results for the Riemann problem at $t=0.1$, obtained with the direct and mM (with cleaning limiter applied) methods using $N^{x}\times N^{v}=256\times16$.  
	We plot the absolute difference in density (black), velocity (blue), and temperature (red), for $\nu=10^{1}$ and $\nu=10^{3}$.}
	\label{fig:RiemannProblem_Direct_mM}
	\end{centering}
\end{figure}

Figure~\ref{fig:RiemannProblem_Direct_mM} shows a comparison of results obtained with the mM method versus results obtained with the direct method, for $\nu=10^{1}$ and $\nu=10^{3}$.  
The results indicate good consistency between the two methods.  
For the case with $\nu=10^{1}$, the absolute difference in any quantity is less than $2\times10^{-4}$, anywhere in the spatial domain.  
The absolute difference is larger for the case with $\nu=10^{3}$, with the largest difference reaching a few~$\times10^{-2}$ around the shock at $x\approx0.23$.  
(We have verified that the differences decrease with increasing phase-space resolution.)  
We find that the conservation properties of the direct method are similar to that of the mM method: the absolute change in the particle number and energy are on the level of $10^{-14}$ to $10^{-13}$ for both values of the collision frequency.  

Next, we investigate in further detail the discretization of the mM model and the effect of applying the cleaning limiter after the explicit steps in the IMEX time integration scheme.  
As discussed in Section~\ref{sec:conservationMM} (see Proposition~\ref{prop:zeroMicroMomentsExplicit} and Remarks~\ref{rem:consistentDiscretization} and \ref{rem:microMomentsConservationErrors}), the ability to maintain the zero moment constraints in Eq.~\eqref{eq:gConstraints} relies on a \emph{consistent} discretization of the terms in Eqs.~\eqref{eq:macroSemiDiscrete} and \eqref{eq:microSemiDiscrete}, so as to achieve the necessary cancellation of terms in Eq.~\eqref{eq:GammaElaborate}.  
To demonstrate the importance of a consistent discretization, we introduce an \emph{inconsistent} discretization, where the velocity integrals involving the Maxwellian in Eq.~\eqref{eq:microBLF} are approximated with a 3-point LG quadrature, instead of analytically as is done in the consistent discretization.  
(For these integrals, the 3-point LG quadrature would be exact if the Maxwellian was replaced by a polynomial of degree $\le p$, but when integrating the Maxwellian it is not.  
Moreover, this quadrature approximation to the Maxwellian integrals becomes increasingly worse as the velocity elements are coarsened.)  

By design, the inconsistent discretization results in nonzero moments of the micro distribution, which then leads to an ambiguous interpretation of the conservation properties of the mM method.  
On one hand, the macro model evolves the conserved quantities $\bsrho_{f}$, and, analytically, these are equal to the moments of the kinetic distribution $\vint{f\be}=\vint{\cE[\bsrho_{f}]\be}+\vint{g\be}=\bsrho_{f}$.  
However, if $\vint{g\be}\ne0$, then $\bsrho_{f}\ne\vint{f\be}$.  
The ambiguity in conservation properties can be removed with the cleaning limiter.  

Figure~\ref{fig:RiemannProblem_cleanVsNoClean_Nu_1e4.Quadrature.Conservation} shows the time evolution of the change in the moments $\bsrho_{f}$ and $\vint{f\be}$, integrated over the spatial domain, for two models using the inconsistent discretization: one without the cleaning limiter, and one with the cleaning limiter.  
These models were computed with $\nu=10^{4}$, using a coarse velocity grid ($N^{x}\times N^{v}=256\times4$).  
For the model without cleaning, conservation as measured by $\bsrho_{f}$ is at the level of machine precision, while it is substantially worse when measured by $\vint{f\be}$.  
In this measure, particle conservation is on the order of $10^{-7}$, while momentum and energy conservation are on the order of $10^{-5}$, and growing at the end of the simulation.  
Similar ambiguous conservation properties were reported in \cite{gamba_etal_2019}, although for a different (smooth) problem.  
With cleaning, the ambiguity is removed, and conservation is at the level of machine precision in both measures.  

\begin{figure}[H]
	\begin{centering}
	\captionsetup[subfigure]{justification=centering}
	\vspace{-24pt}
	\subfloat[Inconsistent discretization without cleaning.]
	{\begin{minipage}{0.8\textwidth}
			\includegraphics[width=\linewidth]{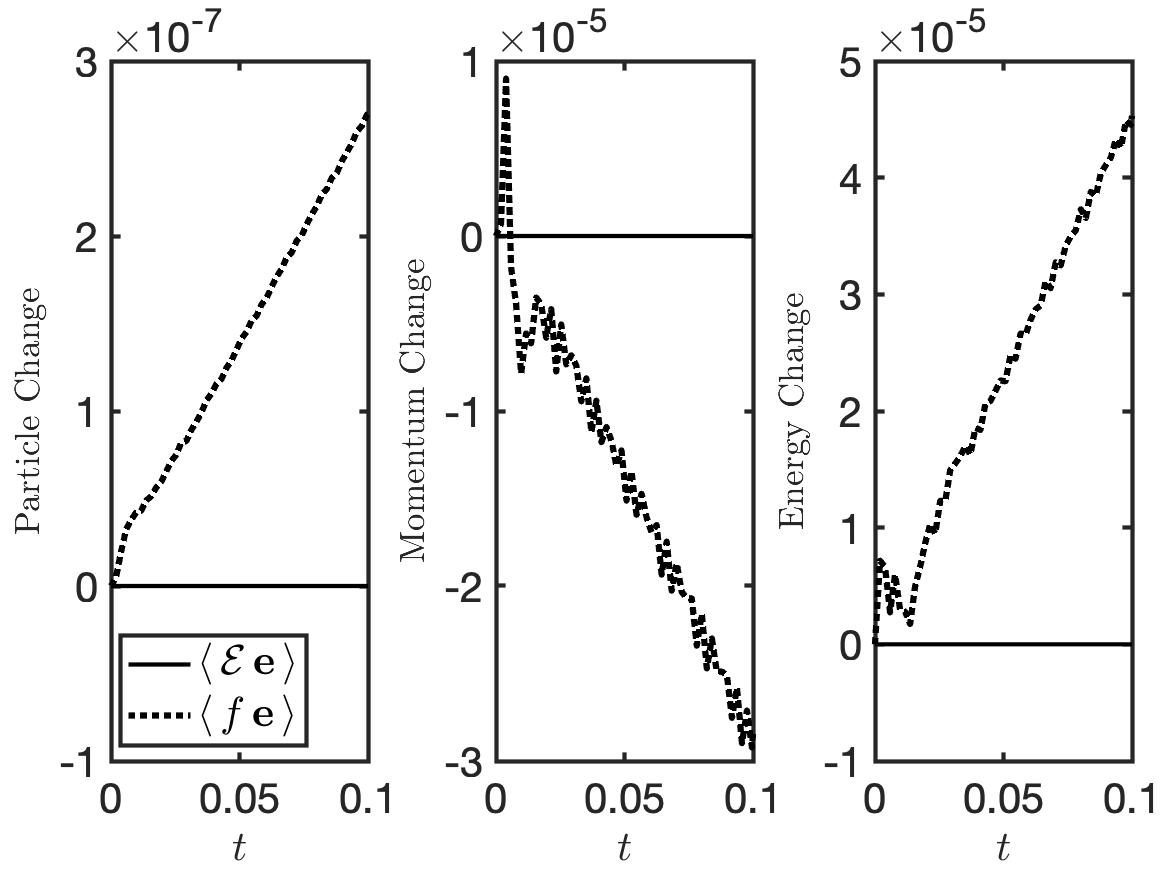}
			\label{fig:RiemannProblem_cleanVsNoClean_Nu_1e4.Quadrature.noClean.Conservation}
		\end{minipage}
	} \\
	\subfloat[Inconsistent discretization with cleaning.]
	{\begin{minipage}{0.8\textwidth}
			\includegraphics[width=\linewidth]{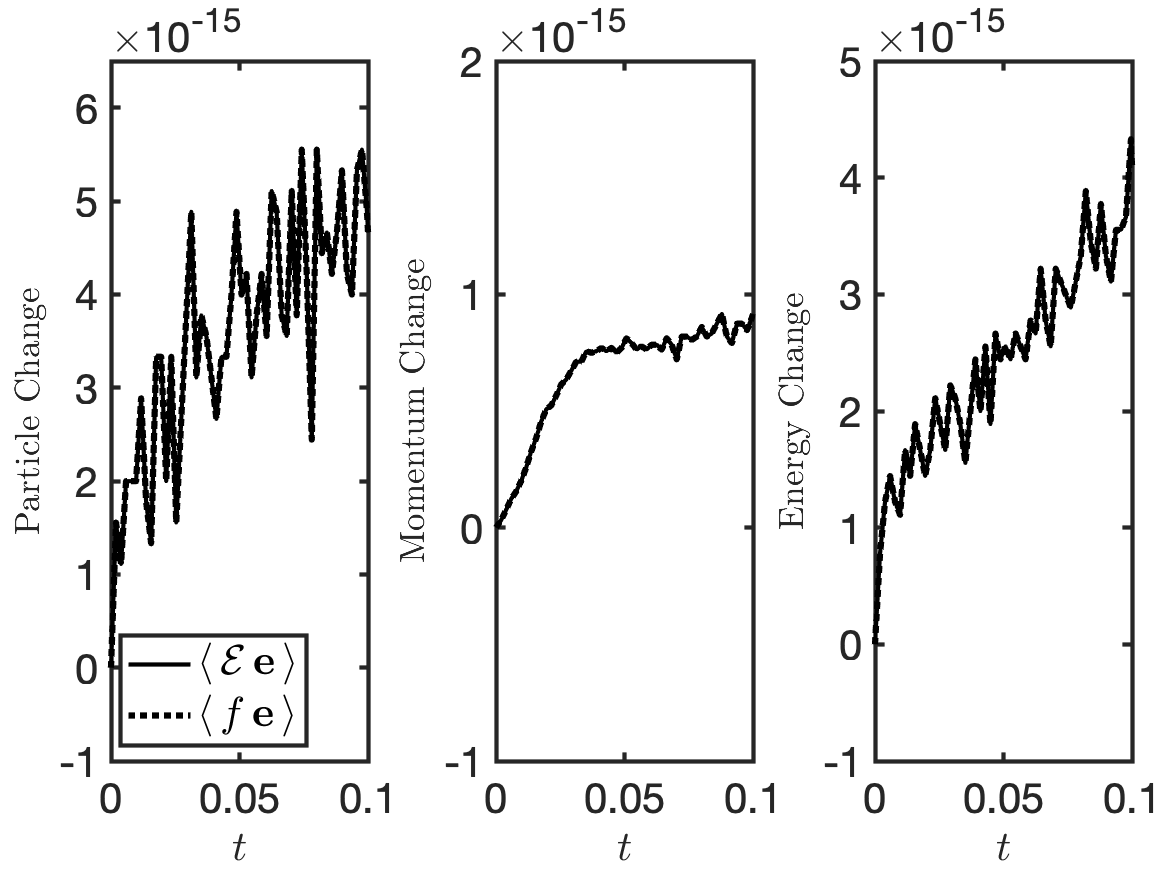}
			\label{fig:RiemannProblem_cleanVsNoClean_Nu_1e4.Quadrature.Clean.Conservation}
		\end{minipage}
	}
	\caption{Conservation properties for a Riemann problem with $\nu=10^{4}$ and $N^{x}\times N^{v}=256\times4$, as obtained with inconsistent discretization of the mM method (see text for details).  
	The change in particle number (left panels), momentum (middle panels), and energy (right panels) is plotted versus time.  
	Results obtained with and without the cleaning limiter are displayed in the bottom and top panels, respectively.  
	In each panel, we plot the change in the conserved quantity as obtained by the macro fields, $\int_{D^{x}}\bsrho_{f} dx$ (solid lines), and the kinetic distribution, $\int_{D^{x}}\vint{f\be}_{D^{v}}dx$ (dotted lines).  }
	\label{fig:RiemannProblem_cleanVsNoClean_Nu_1e4.Quadrature.Conservation}
	\end{centering}
\end{figure}

The seemingly small conservation inconsistency displayed by the model without cleaning in Figure~\ref{fig:RiemannProblem_cleanVsNoClean_Nu_1e4.Quadrature.Conservation} is due to uncontrolled growth in $\vint{g \be}$, induced by the inconsistent discretization.  
We then find that, when the velocity space resolution is coarse, the moments of $g$ may become too large and the solution accuracy adversely impacted.  

Figure~\ref{fig:RiemannProblem_cleanVsNoClean} shows results for the Riemann problem at $t=0.1$ from runs performed with the inconsistent discretization shown in Figure~\ref{fig:RiemannProblem_cleanVsNoClean_Nu_1e4.Quadrature.Conservation}.  
Figure~\ref{fig:RiemannProblem_cleanVsNoClean_Nu_1e4.Quadrature.noClean} shows the particle density and $\vint{g e_{0}}$ versus position for the model without the cleaning limiter.  
In this case, $\vint{g e_{0}}$ develops a substantial magnitude (up to $6.6\times10^{-2}$), which in turn introduces artifacts into the particle density.
Figure~\ref{fig:RiemannProblem_cleanVsNoClean_Nu_1e4.Quadrature} shows the same quantities as in Figure~\ref{fig:RiemannProblem_cleanVsNoClean_Nu_1e4.Quadrature.noClean} for the model with the cleaning limiter applied.  
In this case, the magnitude of $\vint{g e_{0}}$ remains small ($\sim10^{-18}$, or smaller), and the accuracy of the particle density relative to the exact solution is restored.  
We note that with higher resolution in velocity space, the results obtained using inconsistent discretization without cleaning do improve.  

\begin{figure}[H]
	\begin{centering}
	\captionsetup[subfigure]{justification=centering}
	\subfloat[Inconsistent discretization without cleaning.]
	{\begin{minipage}{0.5\textwidth}
			\includegraphics[width=\linewidth]{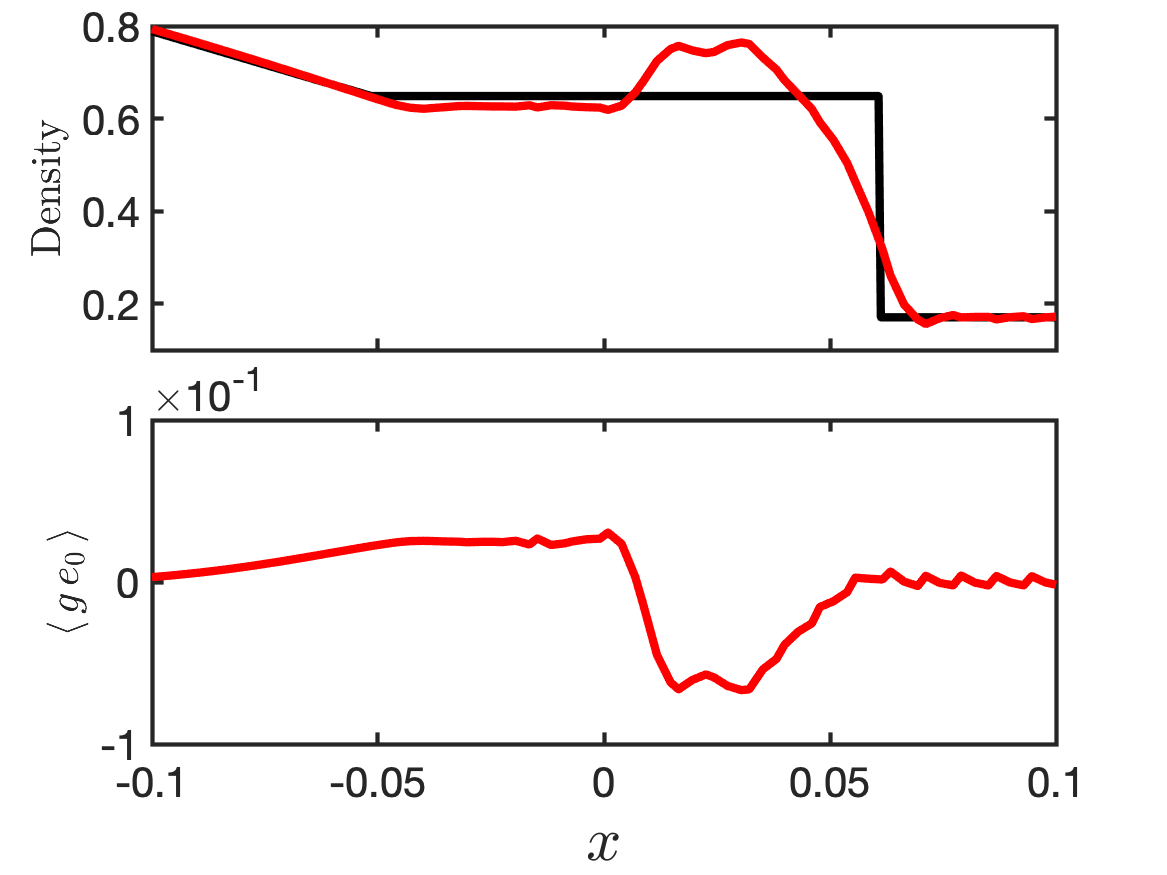}
			\label{fig:RiemannProblem_cleanVsNoClean_Nu_1e4.Quadrature.noClean}
		\end{minipage}
	}
	\subfloat[Inconsistent discretization with cleaning.]
	{\begin{minipage}{0.5\textwidth}
			\includegraphics[width=\linewidth]{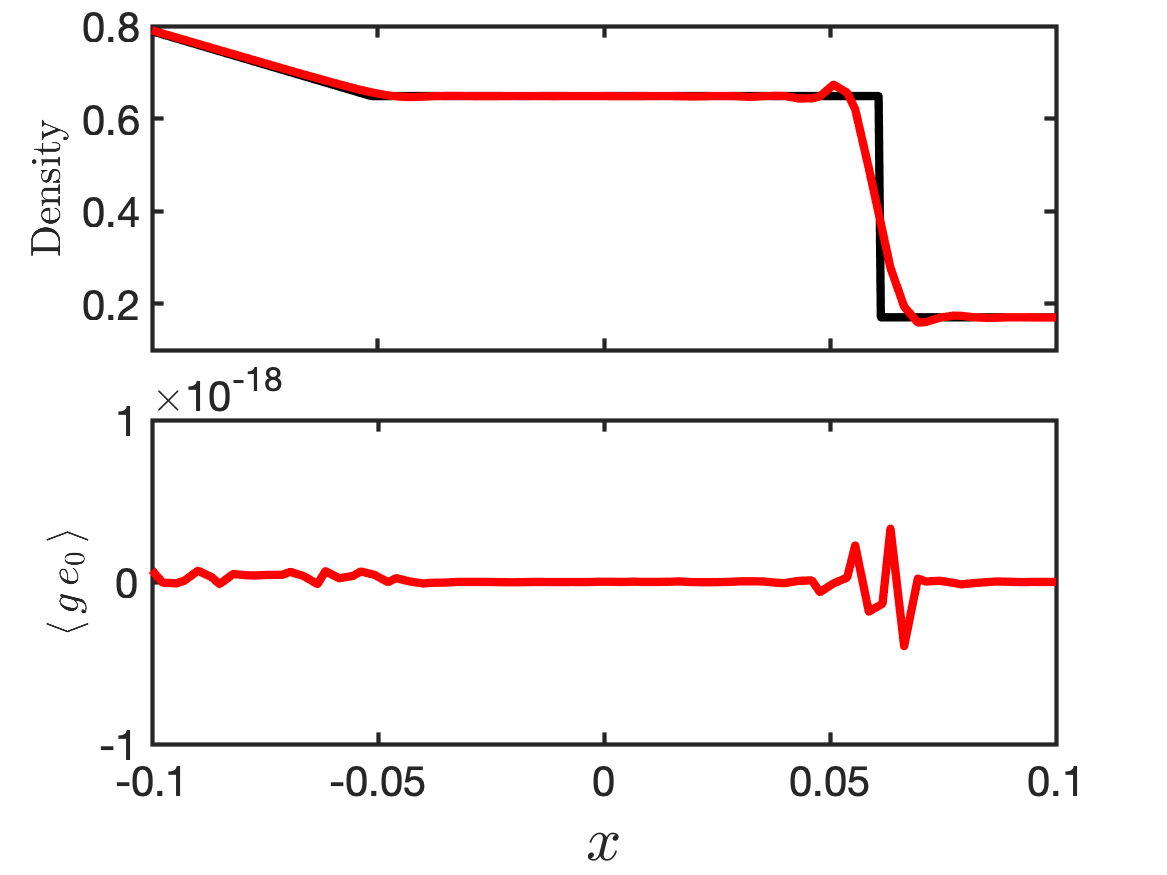}
			\label{fig:RiemannProblem_cleanVsNoClean_Nu_1e4.Quadrature}
		\end{minipage}
	}
	\caption{Numerical results for the Riemann problem at $t=0.1$, obtained with the \emph{inconsistent} mM methods also displayed in Figure~\ref{fig:RiemannProblem_cleanVsNoClean_Nu_1e4.Quadrature.Conservation}.  
	In the left panels we plot the density ($n_{f}$; red line, top panel) and the zeroth moment of $g$ ($\vint{g e_{0}}$; bottom panel) versus position for the model computed \emph{without} the cleaning limiter.  
	For reference, the exact solution to the Riemann problem in the Euler limit is also plotted (black line).  
	The right panels show the same quantities for the model computed \emph{with} the cleaning limiter.}
	\label{fig:RiemannProblem_cleanVsNoClean}
	\end{centering}
\end{figure}

Figure~\ref{fig:RiemannProblem_cleanVsNoClean_Nu_1e4.Consistent} shows results obtained using the \emph{consistent} discretization without the cleaning limiter for the case with $\nu=10^{4}$.  
As noted in Remark~\ref{rem:microMomentsConservationErrors}, the consistent discretization can still give rise to nonzero $\vint{g\be}$ due to finite velocity domain effects.  
Because of this, we have computed two models using the consistent discretization: one using the fiducial domain $D^{v}=[-6,6]$, with $N^{x}\times N^{v}=256\times4$ (same as for the results displayed in Figures~\ref{fig:RiemannProblem_cleanVsNoClean}), and one using an extended velocity domain $D^{v}=[-12,12]$, with $N^{x}\times N^{v}=256\times8$ so that $\dv$ is unchanged.  
These results are displayed in Figure~\ref{fig:RiemannProblem_cleanVsNoClean_Nu_1e4.Consistent.noClean}.  
The model computed with the consistent discretization and the fiducial velocity domain results in a significant reduction in $\vint{g e_{0}}$ (down to about $2\times10^{-4}$), when compared to the results obtained with the inconsistent discretization in Figure~\ref{fig:RiemannProblem_cleanVsNoClean_Nu_1e4.Quadrature.noClean}.  
With the extended velocity domain, $\vint{g e_{0}}$ is further reduced to a magnitude of less than $2\times10^{-13}$ across the spatial domain.  
Moreover, for both velocity domains, the accuracy relative to the exact solution is comparable to that displayed in Figure~\ref{fig:RiemannProblem_cleanVsNoClean_Nu_1e4.Quadrature}.  
As was also noted in Remark~\ref{rem:microMomentsConservationErrors}, the finite velocity domain effects can be eliminated by replacing $I_{j}^{v}$ with $\tilde{I}_{j}^{v}$, defined in Eq.~\eqref{eq:infiniteVelocityDomain}, so that Maxwellians in Eq.~\eqref{eq:microSemiDiscrete} are defined on the infinite velocity domain and terms I, II, III, and VII in Eq.~\eqref{eq:GammaElaborate} vanish --- even when $g_{h}$ is defined on a finite velocity domain.  
Figure~\ref{fig:RiemannProblem_cleanVsNoClean_Nu_1e4.Consistent.noClean.infiniteDomain} shows results obtained with $D^{v}=[-6,6]$ and $I_{j}^{v}\to\tilde{I}_{j}^{v}$.  
For this run, the density is practically identical to that displayed in Figure~\ref{fig:RiemannProblem_cleanVsNoClean_Nu_1e4.Consistent.noClean}, but $\vint{g e_{0}}$ is reduced to the level of machine precision.  
These results illustrate the importance of controlling/preventing spurious growth in the moments of the micro distribution, in order to obtain accurate results in the fluid regime when using coarse grids in velocity space.  

\begin{figure}[H]
	\begin{centering}
	\captionsetup[subfigure]{justification=centering}
	\subfloat[Consistent discretization without cleaning on fiducial and extended velocity domains.]
	{\begin{minipage}{0.5\textwidth}
			\includegraphics[width=\linewidth]{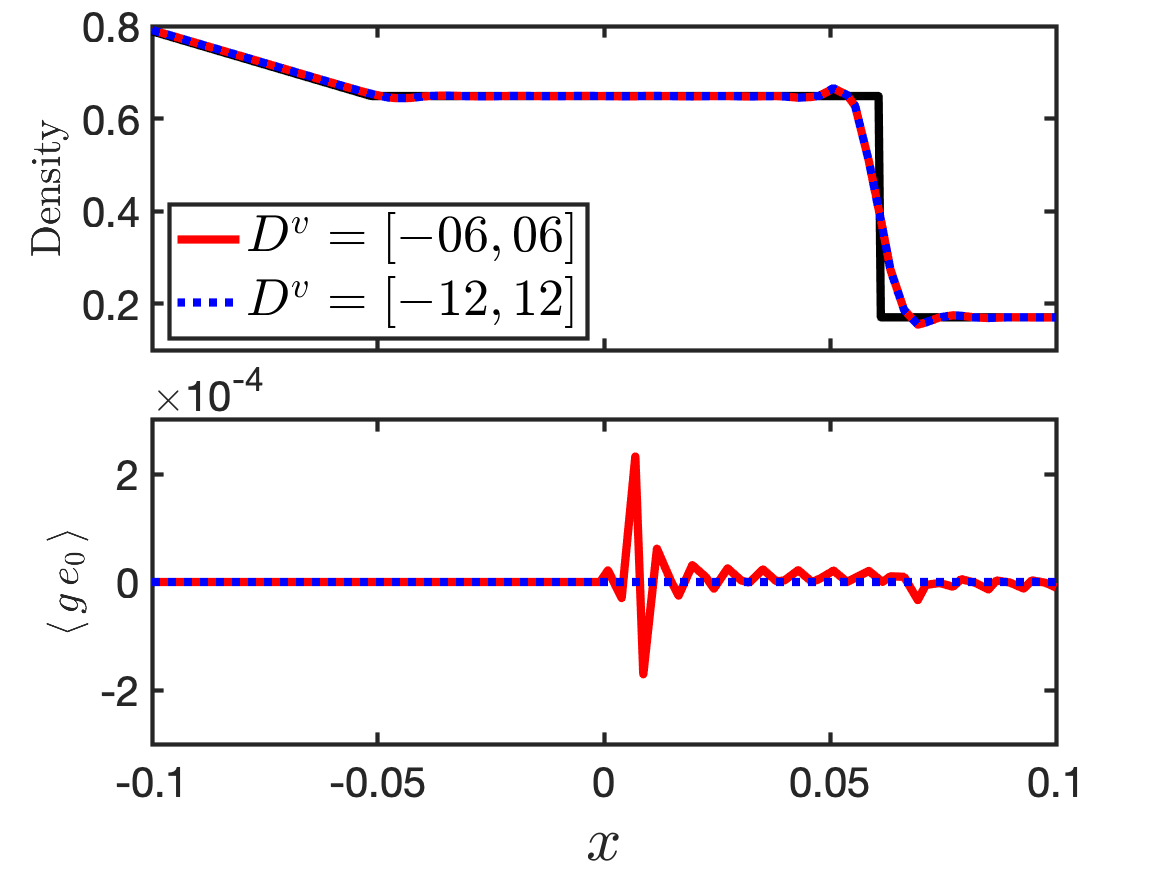}
			\label{fig:RiemannProblem_cleanVsNoClean_Nu_1e4.Consistent.noClean}
		\end{minipage}
	}
	\subfloat[Consistent discretization without cleaning on the fiducial velocity domain and $I_{j}^{v}\to\tilde{I}_{j}^{v}$.]
	{\begin{minipage}{0.5\textwidth}
			\includegraphics[width=\linewidth]{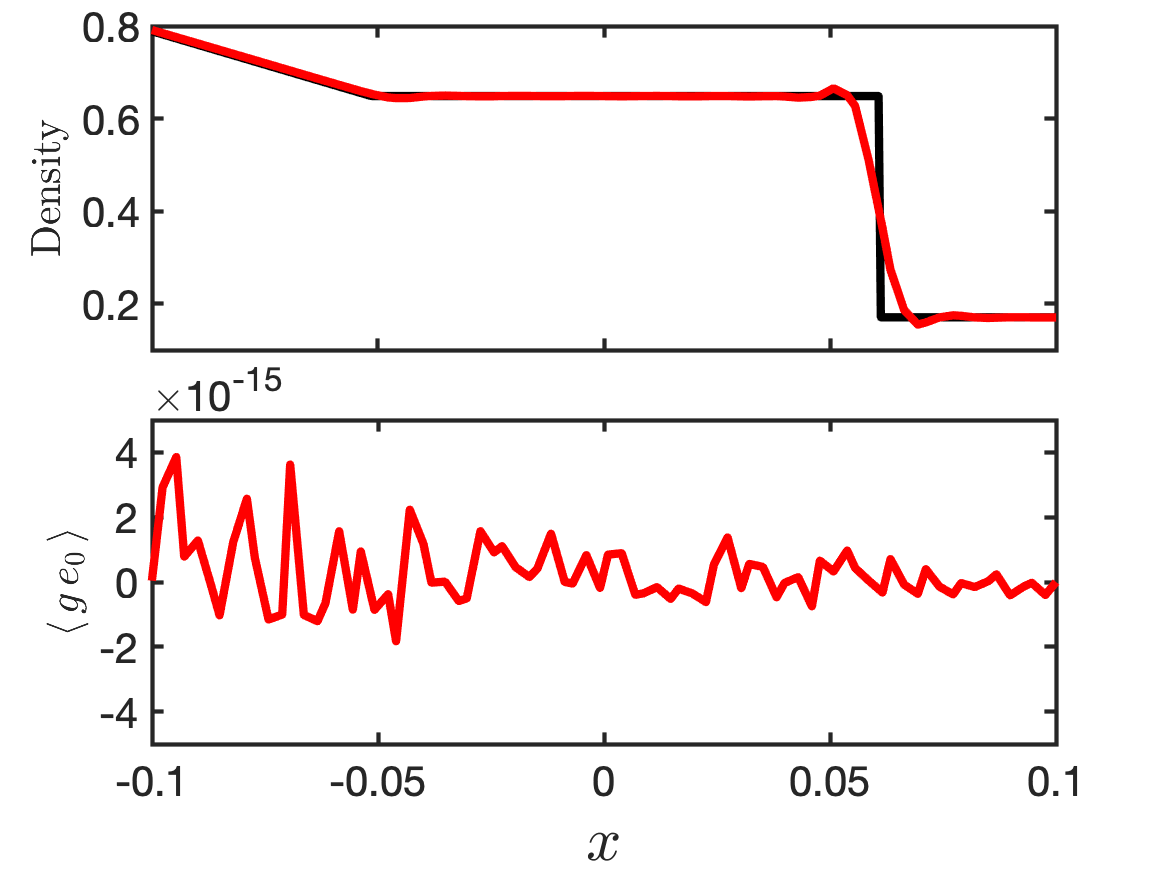}
			\label{fig:RiemannProblem_cleanVsNoClean_Nu_1e4.Consistent.noClean.infiniteDomain}
		\end{minipage}
	}
	\caption{Numerical results for the Riemann problem with $\nu=10^{4}$ at $t=0.1$, obtained with the consistent mM method without the cleaning limiter.  
	We plot the same quantities as in Figure~\ref{fig:RiemannProblem_cleanVsNoClean}.  
	In the left panel we plot results for a model computed with the fiducial velocity domain ($D^{v}=[-6,6]$; solid red lines) and a model computed with an extended velocity domain ($D^{v}=[-12,12]$; dotted blue lines).  
	In the right panel we plot results for a model computed with the fiducial velocity domain, but with $I_{j}^{v}$ replaced with $\tilde{I}_{j}^{v}$ (see Eq.~\eqref{eq:infiniteVelocityDomain}), as suggested in Remark~\ref{rem:microMomentsConservationErrors}.}
	\label{fig:RiemannProblem_cleanVsNoClean_Nu_1e4.Consistent}
	\end{centering}
\end{figure}

\begin{figure}[H]
	\captionsetup[subfigure]{justification=centering}
	\subfloat[$\nu=10^{1}$]
	{\begin{minipage}{0.5\textwidth}
			\includegraphics[width=\linewidth]{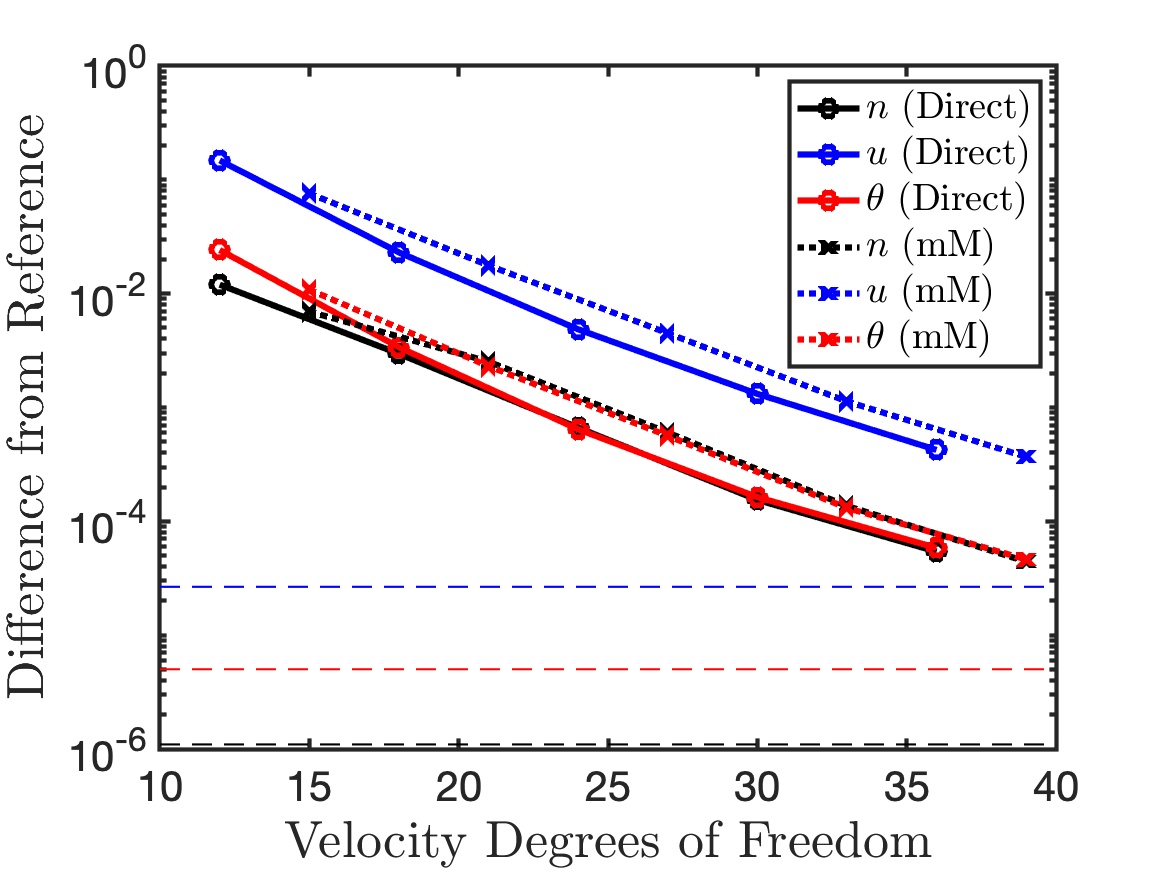}
			\label{fig:RiemannProblem_CompareMoments.1e1}
		\end{minipage}
	}
	\subfloat[$\nu=10^{2}$]
	{\begin{minipage}{0.5\textwidth}
			\includegraphics[width=\linewidth]{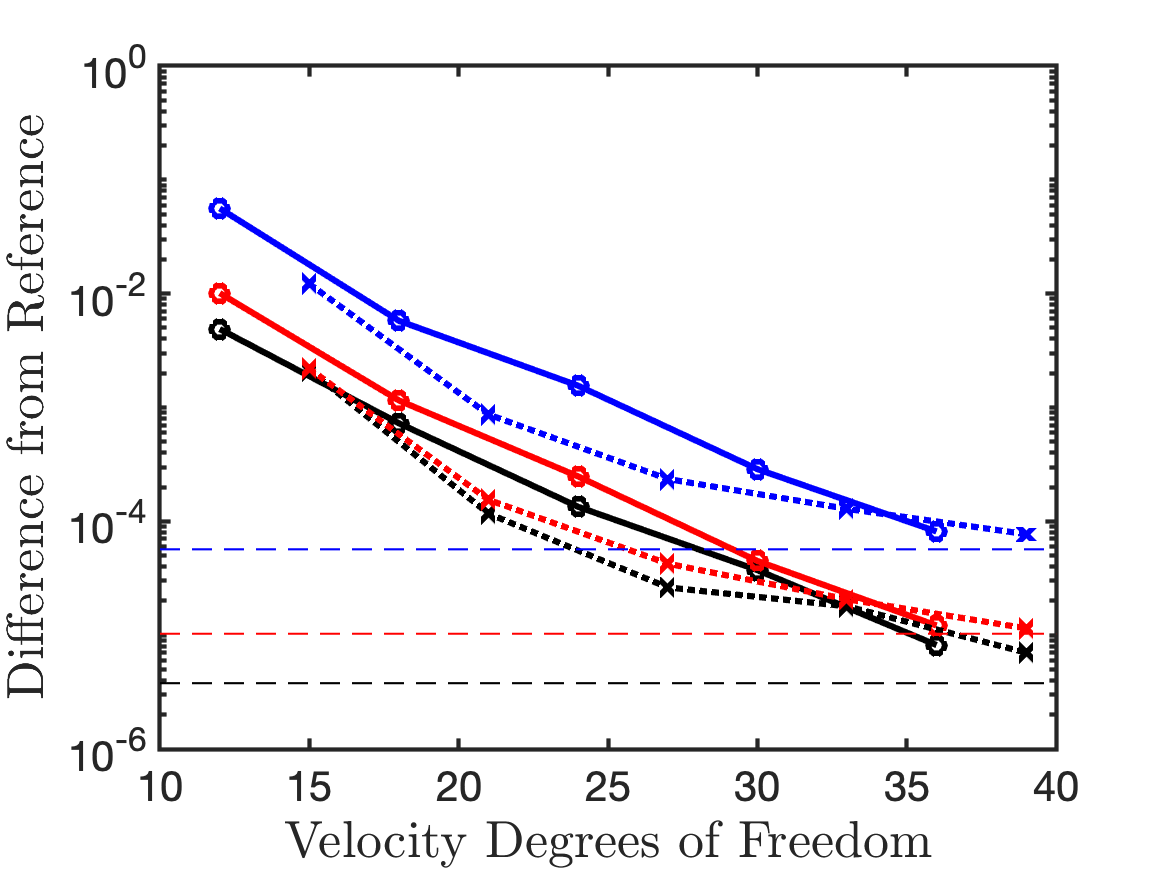}
			\label{fig:RiemannProblem_CompareMoments.1e2}
		\end{minipage}
	} \\
	\subfloat[$\nu=10^3$]
	{\begin{minipage}{0.5\textwidth}
			\includegraphics[width=\linewidth]{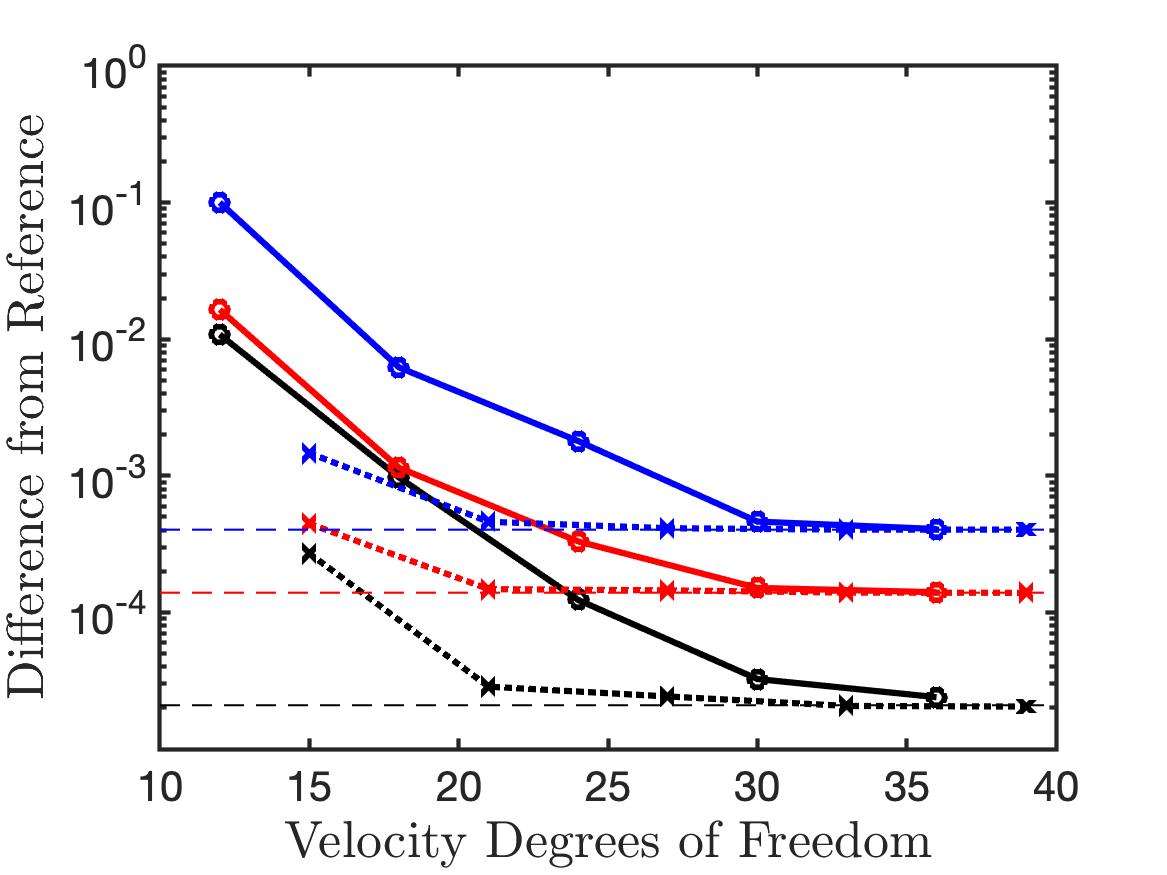}
			\label{fig:RiemannProblem_CompareMoments.1e3}
		\end{minipage}
	}
	\subfloat[$\nu=10^{4}$]
	{\begin{minipage}{0.5\textwidth}
			\includegraphics[width=\linewidth]{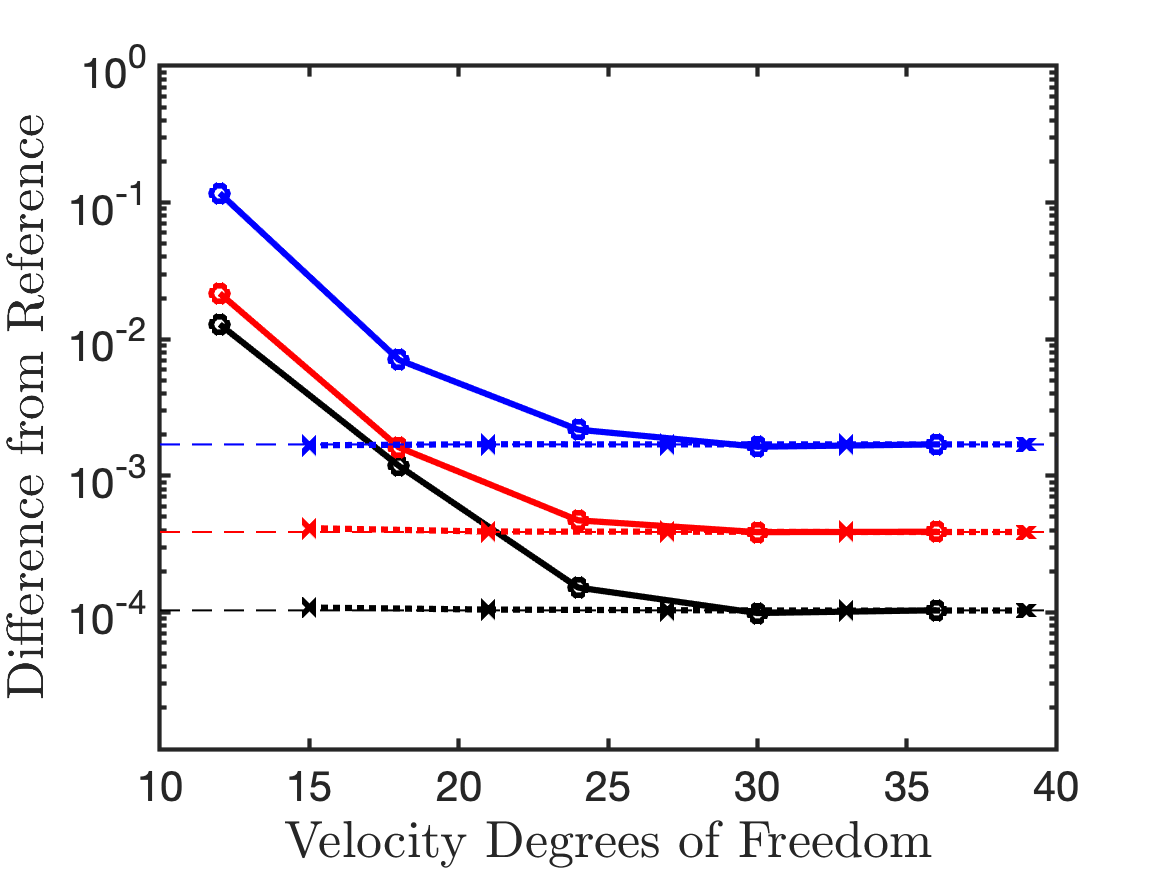}
			\label{fig:RiemannProblem_CompareMoments.1e4}
		\end{minipage}
	}
	\caption{Comparison of the direct and mM methods for the Riemann problem at $t=0.1$, computed with $N^{x}=256$, various values of $N^{v}\in\{4,6,8,10,12\}$, and various values of the collision frequency: $\nu=10^{1}$ (upper left), $\nu=10^{2}$ (upper right), $\nu=10^{3}$ (lower left), $\nu=10^{4}$ (lower right).  
	In each panel we plot the difference in density (black lines), velocity (blue lines), and temperature (red lines) relative to a reference solution (see~Eq.~\eqref{eq:deviationFromReference}), computed with $N^{x}\times N^{v}=256\times64$, versus the number of degrees of freedom in velocity space (see~Eq.~\eqref{eq:velocityDOFs}).  
	Results obtained with the direct method are plotted with solid lines, while results obtained with the mM method are plotted with dotted lines.  
	The horizontal dashed lines (see~Eq.~\eqref{eq:referenceDifference}), where saturation is expected, represent half the difference between the reference solutions obtained with the direct and mM method for density (dashed black lines), velocity (dashed blue lines), and temperature (dashed red lines).}
	\label{fig:RiemannProblem_CompareMoments}
\end{figure}

We use the Riemann problem to do a more quantitative comparison between the direct and mM methods.  
Specifically, we seek to compare the efficiency of the direct and mM methods in terms of accuracy for a given velocity space resolution.  
To this end, we let $X_{N^{v}}\in\{\,n_{N^{v}},\,u_{N^{v}},\,\theta_{N^{v}}\,\}$ denote a velocity moment (i.e., density, velocity, or temperature) computed with $N^{v}$ elements in velocity space, and define the difference of this moment relative to a reference solution $X_{\rm{ref}}$ in the $1$-norm
\begin{equation}
  \delta X_{N^{v}} = \f{|| X_{N^{v}} - X_{\rm{ref}}||_{1}}{|| X_{\rm{ref}} ||_{1}}.  
  \label{eq:deviationFromReference}
\end{equation}
We fix $N^{x}=256$, and compute results with both methods using samples with $N^{v}=4,6,8,10$, and $12$, and $\nu=10^{1},10^{2},10^{3}$, and $10^{4}$.  
Since we do not know the exact solution to the Riemann problem for arbitrary values of the collision frequency, we use solutions obtained with the direct and mM methods, computed using $N^{x}\times N^{v}=256\times64$, to define the reference values $X_{\rm{ref}}$.  
(We are interested in comparing the methods in terms of velocity space resolution, especially when the velocity resolution is coarse, and keep the spatial resolution of the reference the same as in the samples.)  
To reduce bias towards one method in the comparison, the reference solution is obtained by averaging the solutions obtained with the two methods
\begin{equation}
  X_{\rm{ref}}=\f{1}{2}\big(X_{\rm{ref,D}}+X_{\rm{ref,mM}}\big),
\end{equation}
where $X_{\rm{ref,D}}$ and $X_{\rm{ref,mM}}$ denote numerical solutions obtained with the direct and mM methods using $N^{x}\times N^{v}=256\times64$, respectively.  
As the solution obtained with each method converges to its respective reference (i.e., $X_{\rm{ref,D}}$ and $X_{\rm{ref,mM}}$), we expect the difference relative to the reference in Eq.~\eqref{eq:deviationFromReference} to saturate at $\frac{1}{2}\delta X_{\rm{ref}}$, where
\begin{equation}
  \delta X_{\rm{ref}}=||X_{\rm{ref,D}}-X_{\rm{ref,mM}}||_{1}/||X_{\rm{ref}}||_{1}.  
  \label{eq:referenceDifference}
\end{equation}
In this comparison, we use the cleaning limiter with the mM method.  

In Figure~\ref{fig:RiemannProblem_CompareMoments}, we plot the difference from the reference, defined in Eq.~\eqref{eq:deviationFromReference}, for the direct and mM methods versus $N_{\mbox{\tiny DOF}}^{v}$, the number of velocity degrees of freedom per spatial point:
\begin{equation}
  N_{\mbox{\tiny DOF}}^{v} 
  =\left\{
  \begin{array}{cc}
    N^{v} \times (p+1) & (\text{direct method}), \\
    N^{v} \times (p+1) + 3 & (\text{mM method}),
  \end{array}
  \right.
  \label{eq:velocityDOFs}
\end{equation}
that, for the mM method, includes the three moments evolved by the macro equation; i.e., Eq.~\eqref{eq:macroModel}.
Results obtained with various values of the collision frequency are plotted.  
In each panel, we plot the difference in density, velocity, and temperature.  
We also plot half the difference between the two reference solutions $\frac{1}{2}\delta X_{\rm{ref}}$ as horizontal dashed lines in each panel.  

From Figure~\ref{fig:RiemannProblem_CompareMoments}, we observe that the difference $\delta X_{N^{v}}$ decreases (close to exponentially) with increasing velocity resolution for all values of $N_{\mbox{\tiny DOF}}^{v}$ (for both methods) when $\nu=10^{1}$.  
Then, for a given value of $N_{\mbox{\tiny DOF}}^{v}$, the difference is smaller for the direct method due to the additional three degrees of freedom evolved by the mM method.  
For $\nu=10^{2}$, the mM method provides somewhat better accuracy for intermediate values of $N_{\mbox{\tiny DOF}}^{v}$ (compare solid and dotted lines of the same color), while the dotted lines --- representing the mM method --- begin to flatten when the difference approaches $\frac{1}{2}\delta X_{\rm{ref}}$, for larger values of $N_{\mbox{\tiny DOF}}^{v}$.  
The mM method provides substantially improved accuracy for small $N_{\mbox{\tiny DOF}}^{v}$ as the collision frequency is further increased.  
For $\nu=10^{3}$, the mM method compares well with the reference solution when $N^{v}=6$, while for $\nu=10^{4}$ the mM method captures the reference solution well with $N^{v}=4$.  
The direct method requires more velocity degrees of freedom to reach the same level of accuracy.  
For large $N_{\mbox{\tiny DOF}}^{v}$, the difference relative to the reference saturates, as expected, at $\delta X_{N^{v}}=\f{1}{2}\delta X_{\rm{ref}}$ for both methods.  

As seen in Figure~\ref{fig:RiemannProblem_CompareMoments} for small $N_{\mbox{\tiny DOF}}^{v}$, the mM method provides better accuracy than the direct method in the fluid regime, while the direct method catches up as $N_{\mbox{\tiny DOF}}^{v}$ is increased.  
To further emphasize this point, Figure~\ref{fig:RiemannProblem_CompareMoments.Density.1e4} shows the particle density versus position for the case with $\nu=10^{4}$, computed with the direct and mM methods using $N^{x}\times N^{v}=256\times4$ and $N^{x}\times N^{v}=256\times8$.  
When $N^{v}=4$, the mM method compares favorably to the exact solution of the Riemann problem in the Euler limit (solid red line).  
However, when $N^{v}=8$, the direct and mM methods are practically indistinguishable.  

\begin{figure}[H]
	\begin{centering}
	\captionsetup[subfigure]{justification=centering}
	\subfloat[$N^{x}\times N^{v}=256\times4$]
	{\begin{minipage}{0.5\textwidth}
			\includegraphics[width=\linewidth]{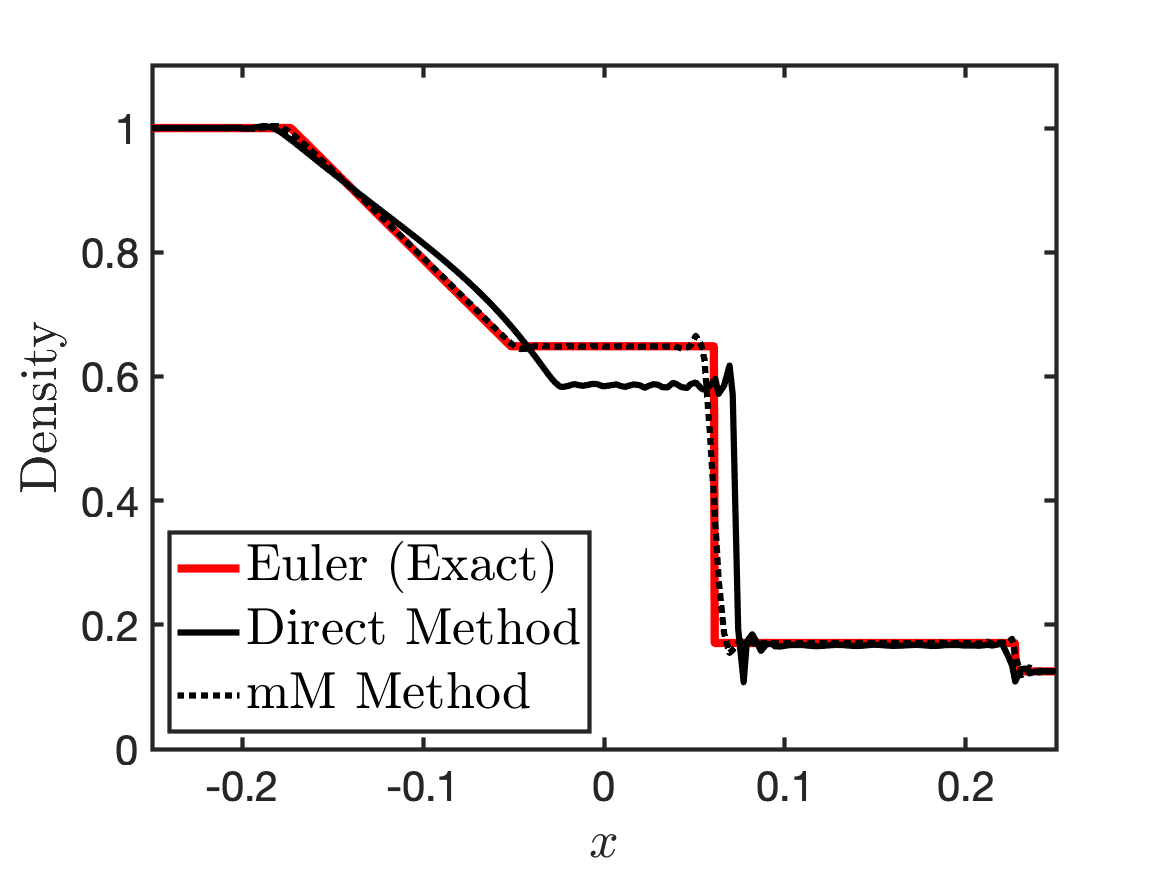}
			\label{fig:RiemannProblem_CompareMoments.Density.1e4.128x04}
		\end{minipage}
	}
	\subfloat[$N^{x}\times N^{v}=256\times8$]
	{\begin{minipage}{0.5\textwidth}
			\includegraphics[width=\linewidth]{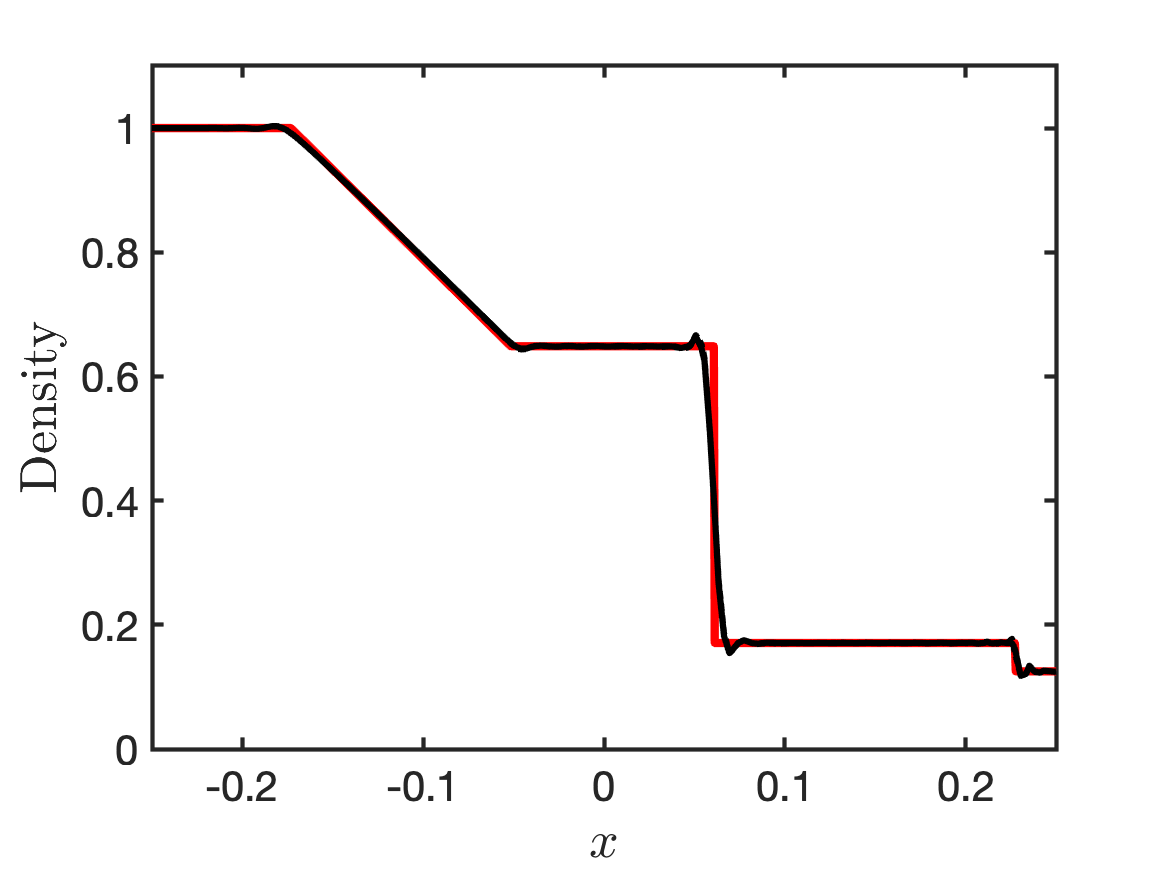}
			\label{ffig:RiemannProblem_CompareMoments.Density.1e4.128x08}
		\end{minipage}
	}
	\caption{Density at $t=0.1$ for the Riemann problem, obtained using $\nu=10^{4}$, and $N^{x}\times N^{v}=256\times4$ (left) and $N^{x}\times N^{v}=256\times8$ (right).  
	Results are plotted for the direct method (solid black line) and the mM method (dotted black line).  
	For reference, the exact solution to the Riemann problem in the Euler limit is also plotted (solid red line).}
	\label{fig:RiemannProblem_CompareMoments.Density.1e4}
	\end{centering}
\end{figure}

\subsection{Two-Stream Instability}
\label{sec:numerical_TwoStream}


Here we consider a collisionless test ($\nu=0$) with electric fields included --- namely a version of the two-stream instability problem (see, e.g., \cite{banksHittinger_2010,rossmanithSeal_2011}) --- to solve the VP system.  
Unless otherwise stated, the computational domain is $D^{x}=D^{v}=[-2\pi,2\pi]$.  
The initial distribution is
\begin{equation}
  f_{0}(x,v) = \big(\,1-\f{1}{2}\cos\big(\,\f{1}{2}x\,\big)\big)\f{v^{2}}{\sqrt{\pi}}\exp\big(\,-v^{2}\,\big).  
\end{equation}
We use periodic boundary conditions in the spatial domain, and zero-flux conditions at the velocity boundaries.  
The simulation is evolved until $t=10$.  
This test typically runs for longer times (e.g., $t=45$), at which point the distribution function exhibits a filamentary structure that requires relatively high resolution to resolve.  
Moreover, it becomes increasingly difficult as the simulation proceeds in time to maintain positivity of the distribution function \cite{rossmanithSeal_2011}.  
Maintaining positive distributions in the context of the mM method is an interesting topic, but outside the scope of the present paper.  
Here, our goal is to compare the direct and mM methods, and in particular to (1) demonstrate the total energy conservation property of the mM method, and (2) investigate any potential impacts of violating the constraints $\vint{\be g}=0$.  
Therefore, mainly to reduce the simulation wall-clock time, we run for a shorter time, with relatively coarse resolution.  

\begin{figure}[H]
	\captionsetup[subfigure]{justification=centering}
	\subfloat[$f(v,x,t=10)$]
	{\begin{minipage}{0.5\textwidth}
		\includegraphics[width=\linewidth]{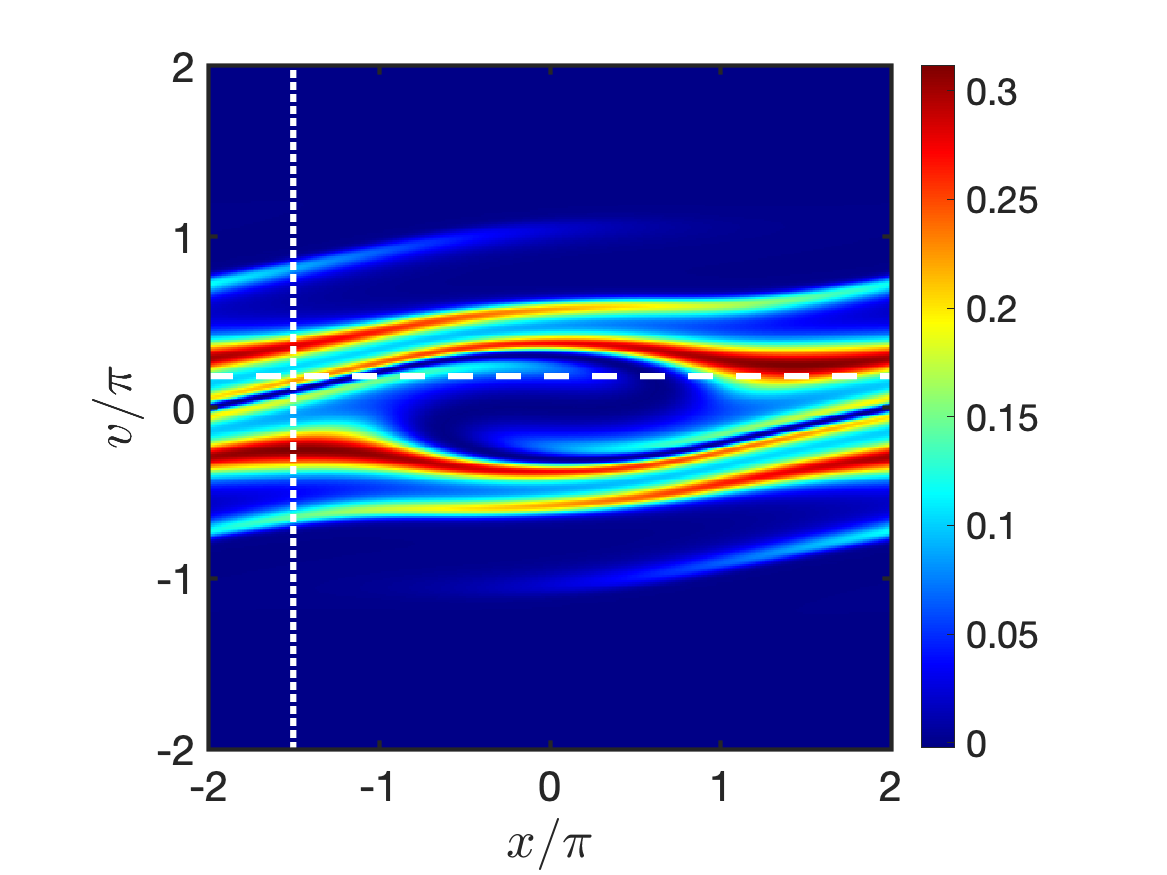}
	\end{minipage}
	}
	\subfloat[$f(v,x/\pi=-1.50,t=10)$]
	{\begin{minipage}{0.5\textwidth}
		\includegraphics[width=\linewidth]{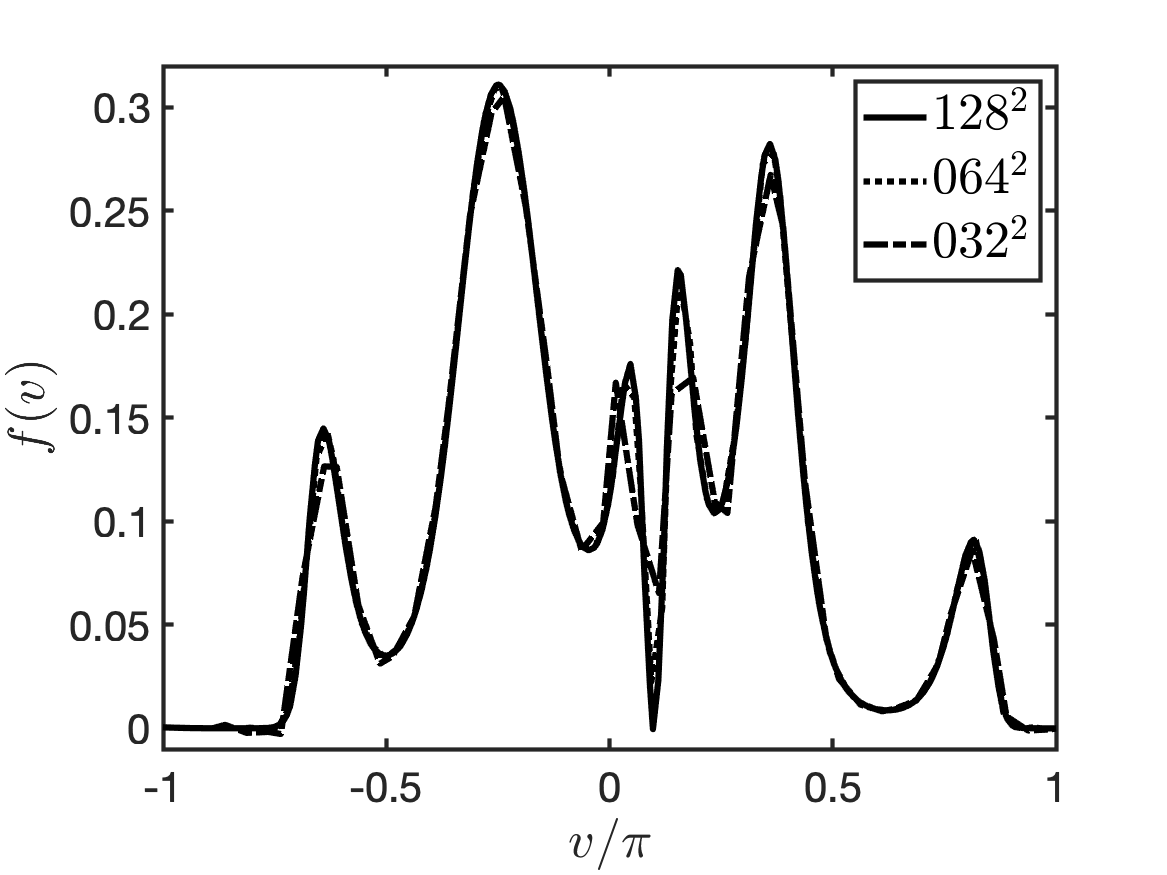}
	\end{minipage}
	} \\
	\subfloat[$f(v/\pi=0.18,x,t=10)$]
	{\begin{minipage}{0.5\textwidth}
		\includegraphics[width=\linewidth]{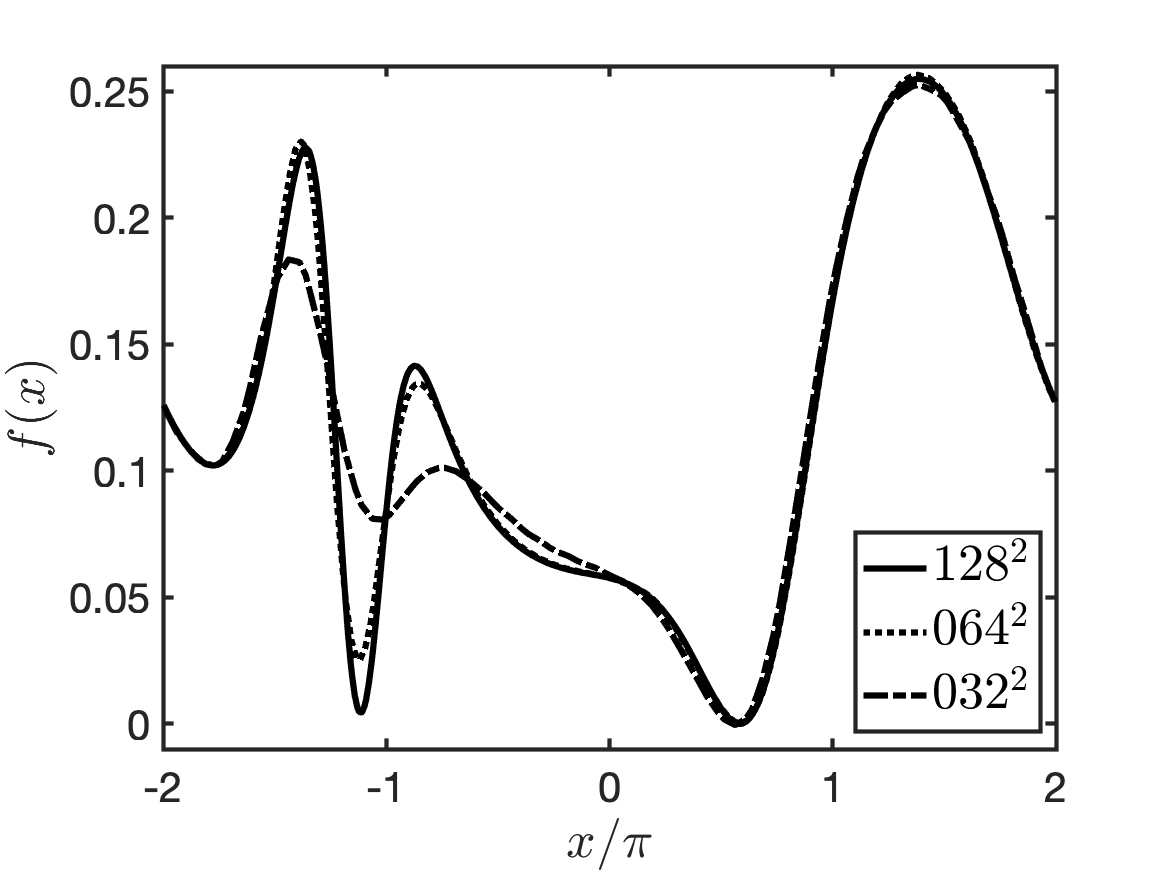}
	\end{minipage}
	}
	\subfloat[Total energy versus time]
	{\begin{minipage}{0.5\textwidth}
		\includegraphics[width=\linewidth]{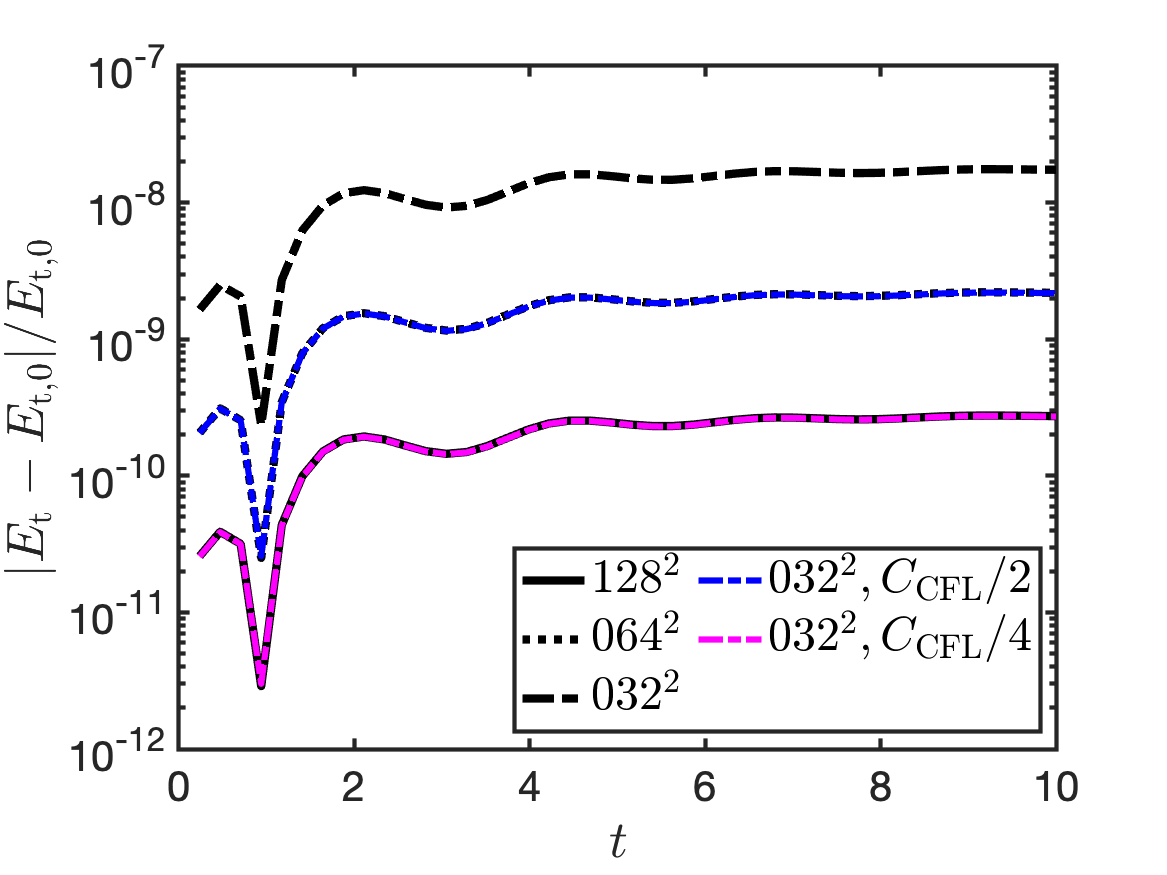}
	\end{minipage}
	}
	\caption{Results for the two-stream instability test, obtained with the mM method using various phase-space resolutions.  In the upper left panel, a snapshot of the distribution function at $t=10$ is shown for a model with $N^{x}\times N^{v}=128\times128$.  In the upper right panel, the distribution at $t=10$ is plotted versus $v$ (for $x/\pi\approx-1.50$; i.e., along the vertical dotted white line in the upper left panel), for models with $N^{x}\times N^{v}=32\times32$ (dash-dot), $64\times64$ (dotted), and $128\times128$ (solid).  Similarly, in the lower left panel, the distribution at $t=10$ is plotted versus $x$ (for $v/\pi\approx0.18$; i.e., along the horizontal dashed white line in the upper left panel).  In the lower right panel, the relative change in the total energy is plotted versus time.  In addition to showing the change in $E_{\rm{t}}$ from the models displayed in the other panels in this figure, we also show results for two models with $N^{x}\times N^{v}=32\times32$, where $C_{\rm{CFL}}$ has been reduced by a factor of two (dash-dot blue line) and four (dash-dot magenta line).}
	\label{fig:TwoStreamInstability.MM}
\end{figure}

Figure~\ref{fig:TwoStreamInstability.MM} displays results obtained with the mM method using the cleaning limiter for various phase-space resolutions.  
The upper left panel displays the distribution function $f=\cE[\bsrho_{f}]+g$ versus $x$ and $v$ at $t=10$, when the characteristic vortex-like structure is starting to form.  
In the upper right and lower left panels, we plot the the distribution function versus $v$ (for $x/\pi\approx-1.50$) and $x$ (for $v/\pi\approx0.18$), respectively.  
These panels indicate that the $64^{2}$ simulation captures the main features of the higher resolution simulation ($128^{2}$) reasonably well.  
In the lower right panel, we plot the relative change in the total energy
\begin{equation}
  E_{\rm{t}} = \f{1}{2}\int_{D^{x}}\big[\,\int_{D^{v}}f\,v^{2}\,dv + E^{2}\,\big]\,dx
  \label{eq:particlePlusPotentialEnergy}
\end{equation}
versus time.  
These results demonstrate that the mM method conserves total energy in the semi-discrete limit (cf. Remark~\ref{rem:momentumEnergyConservation}).  
Specifically, when the spatial resolution increases by a factor of two in each dimension, the time step decreases by a corresponding factor of two due to the time step restriction in Eq.~\eqref{eq:cflTimeStep}, and we find that the magnitude of the relative change in the total energy at $t=10$ is $1.74\times10^{-8}$, $2.19\times10^{-9}$, and $2.74\times10^{-10}$ for phase-space resolutions of $32^{2}$, $64^{2}$, and $128^{2}$, respectively.  
That is, the relative change in the total energy at $t=10$ decreases roughly as $\dt^{3}$, which is expected when the energy conservation is semi-discrete and a third-order accurate time-stepping method (i.e., SSP-RK3) is used.  
To elaborate further, we have computed two additional models with a phase-space resolution of $32^{2}$, but where $C_{\rm{CFL}}$ has been reduced by factors of two and four, so that the time step in these runs is identical to the $64^{2}$ and $128^{2}$ runs, respectively.  
For these runs, the relative change in the total energy is practically indistinguishable from that obtained in the runs with higher phase-space resolution with identical time step.  

\begin{figure}[H]
	\begin{centering}
	\includegraphics[width=0.99\linewidth]{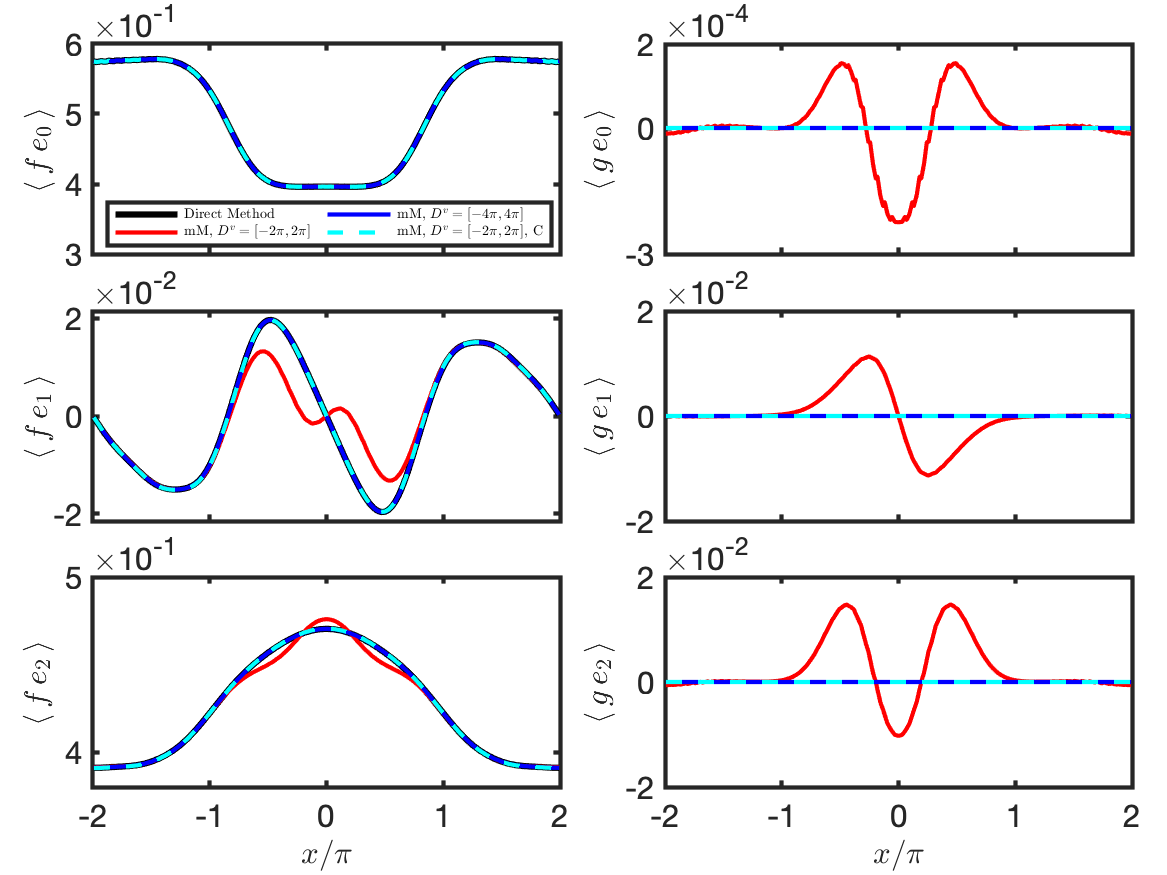}
	\caption{Moments of the distribution function versus position for the two-stream instability at $t=10$, obtained with a phase-space resolution of $N^{x}\times N^{v}=64\times64$.  
	The zeroth, first, and second moments of $f$ are plotted in the upper, middle, and lower panels of the left column, respectively.  
	In each panel, results are shown for the direct method (black lines), the mM method without cleaning limiter (red lines), mM method without cleaning limiter and extended velocity domain (blue lines), and mM method with cleaning limiter (dashed cyan lines).  
	In the right column, we plot the corresponding moments of the micro distribution as obtained with mM method; $\vint{ge_{0}}$ (upper), $\vint{ge_{1}}$ (middle), and $\vint{ge_{2}}$ (lower).}
	\label{fig:TwoStreamInstability.fgMoments}
	\end{centering}
\end{figure}


Next, we present results obtained with the mM method --- with and without the cleaning limiter --- using $N^{x}\times N^{v}=64^{2}$.  
We also compare with results obtained with the direct method.  
In the left column of Figure~\ref{fig:TwoStreamInstability.fgMoments} we plot the zeroth, first, and second moments of the distribution function (components of $\bsrho_{f}$ for the mM method) versus position at $t=10$.  
In each panel, we plot results obtained with the direct method, and results obtained with the mM method without the cleaning limiter, using the fiducial velocity domain $D^{v}=[-2\pi,2\pi]$ and the extended velocity domain $D^{v}=[-4\pi,4\pi]$.  
(For the model with the extended velocity domain we set $N^{v}=128$ so that $\dv$ is unchanged.)  
We also plot results obtained with the mM method using the fiducial domain and the cleaning limiter.  
(Here, for the models using the fiducial velocity domain, we reduced $C_{\rm{CFL}}$ by a factor of two to ensure that the time step is identical to the time step in the model with the extended velocity domain.)  
In the right column of Figure~\ref{fig:TwoStreamInstability.fgMoments} we plot the moments of the micro distribution for the mM models plotted in the left column.  

When the moments of the micro distribution remain small, the results obtained with the mM method are indistinguishable from those obtained with the direct method.  
However, the model computed with the fiducial velocity domain without cleaning deviates significantly from the other models.  
This is particularly evident in the plot of $\vint{f e_{1}}$ in the middle left panel in Figure~\ref{fig:TwoStreamInstability.fgMoments}, but also discernible in the plot of the second moment in the bottom left panel (solid red lines).  
Indeed, the magnitude of first and second moments of the micro distribution reach almost $2\times10^{-2}$ for this model, while magnitude of the zeroth moment is about two orders of magnitude smaller.  
By extending the velocity domain, or applying the cleaning limiter, the moments of the micro distribution remain small everywhere in the spatial domain.  
For the mM model computed on the extended velocity domain, the magnitude of $\vint{g e_{0}}$ is below $5\times10^{-13}$, while the magnitude of $\vint{g e_{1}}$ and $\vint{g e_{2}}$ is below $10^{-10}$.  
For the mM model computed with the cleaning limiter, the magnitude of all the moments of $g$ is at the level of machine precision ($10^{-16}$).  


Figure~\ref{fig:TwoStreamInstability.EnergyChange} shows the relative change in the total energy versus time for the models plotted in Figure~\ref{fig:TwoStreamInstability.fgMoments}.  
The model computed on the fiducial velocity domain without the cleaning limiter exhibits inferior total energy conservation properties, and the magnitude of the relative change in the total energy reaches about $10^{-3}$.  
For the models where the moments of the micro distribution remain small, the relative change in the total energy is practically identical to that obtained with the direct method.  
The negative impact of $\vint{g}\ne0$ on the evolution of the total energy is expected from the considerations in \ref{sec:energyConservationVP}, where we show that, for the mM method, the rate of change of the total energy in Eq.~\eqref{eq:particlePlusPotentialEnergy} is proportional to the spatial integral of $\vint{g} E$ (see Eq.~\eqref{eq:energyConservationVP_MM}).  

\begin{figure}[H]
	\begin{centering}
	\includegraphics[width=0.99\linewidth]{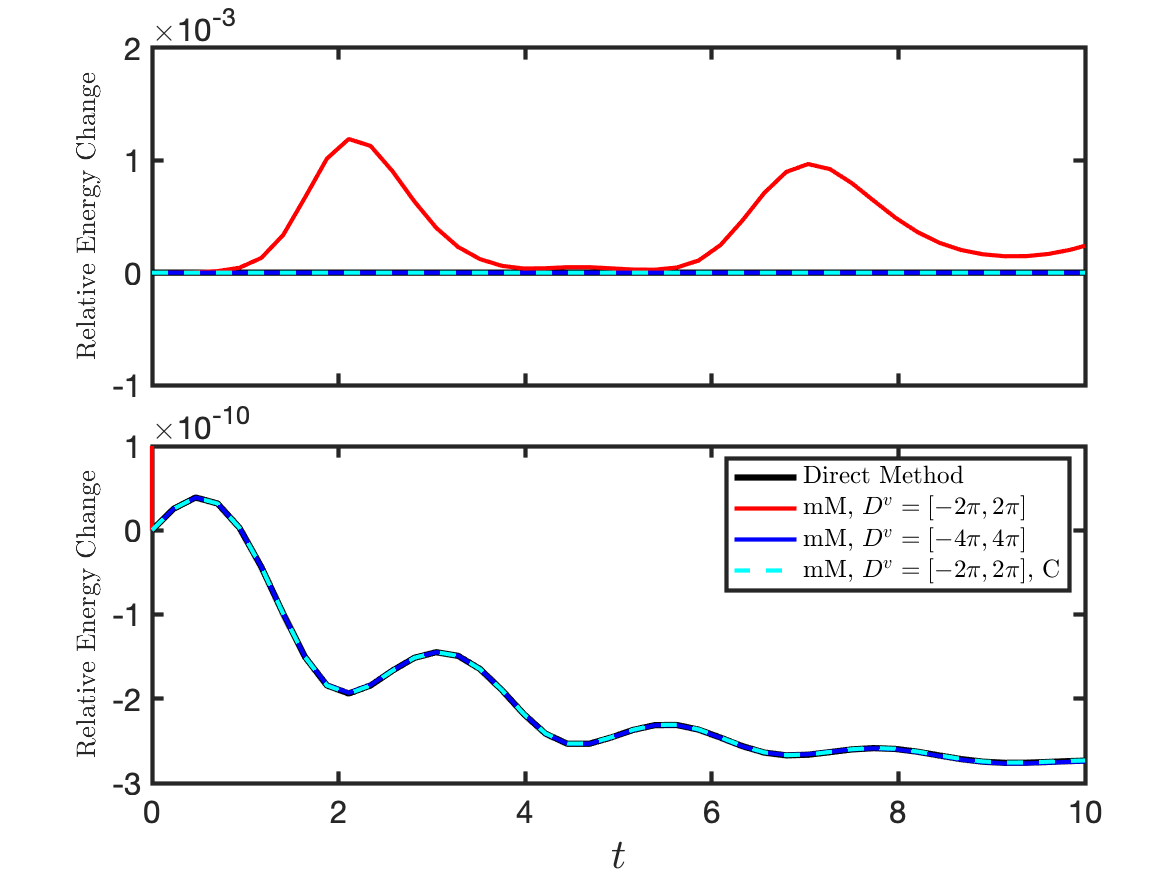}
	\caption{Relative change in total energy versus time for the same models as plotted in Figure~\ref{fig:TwoStreamInstability.fgMoments}.  
	Both panels display the same data, but use different ordinate ranges to more clearly show the behavior of the total energy in the different models.}
	\label{fig:TwoStreamInstability.EnergyChange}
	\end{centering}
\end{figure}

\subsection{Collisional Landau Damping}
\label{sec:numerical_Landau}

Finally, we consider the collisional Landau damping problem (see, e.g., \cite{crestetto_etal_2012,hakim_etal_2020}), which involves both collisions and electric fields.  
We let the computational domain be given by $D^{x}=[-2\pi,2\pi]$ and $D^{v}=[-6,6]$, and the initial distribution be given by
\begin{equation}
  f_{0}(x,v) = \big(\,1+10^{-4}\cos\big(\,\f{1}{2}x\,\big)\,\big)\,\f{1}{\sqrt{2\pi}}\exp\big(\,-\f{v^{2}}{2}\,\big).  
\end{equation}
We use periodic boundary conditions in the spatial domain, and zero-flux conditions at the velocity boundaries, and evolve until $t=50$.  
Our goals are to demonstrate (i) the performance of the mM method with respect to expected damping rates of the electrostatic potential energy and energy conservation properties, and (ii) that the mM method achieves improved accuracy relative to the direct method in the strongly collisional regime.  

\begin{figure}[H]
	\begin{centering}
	\captionsetup[subfigure]{justification=centering}
	\subfloat[$\nu = 0.0$]
	{\begin{minipage}{0.49\textwidth}
			\includegraphics[width=\linewidth]{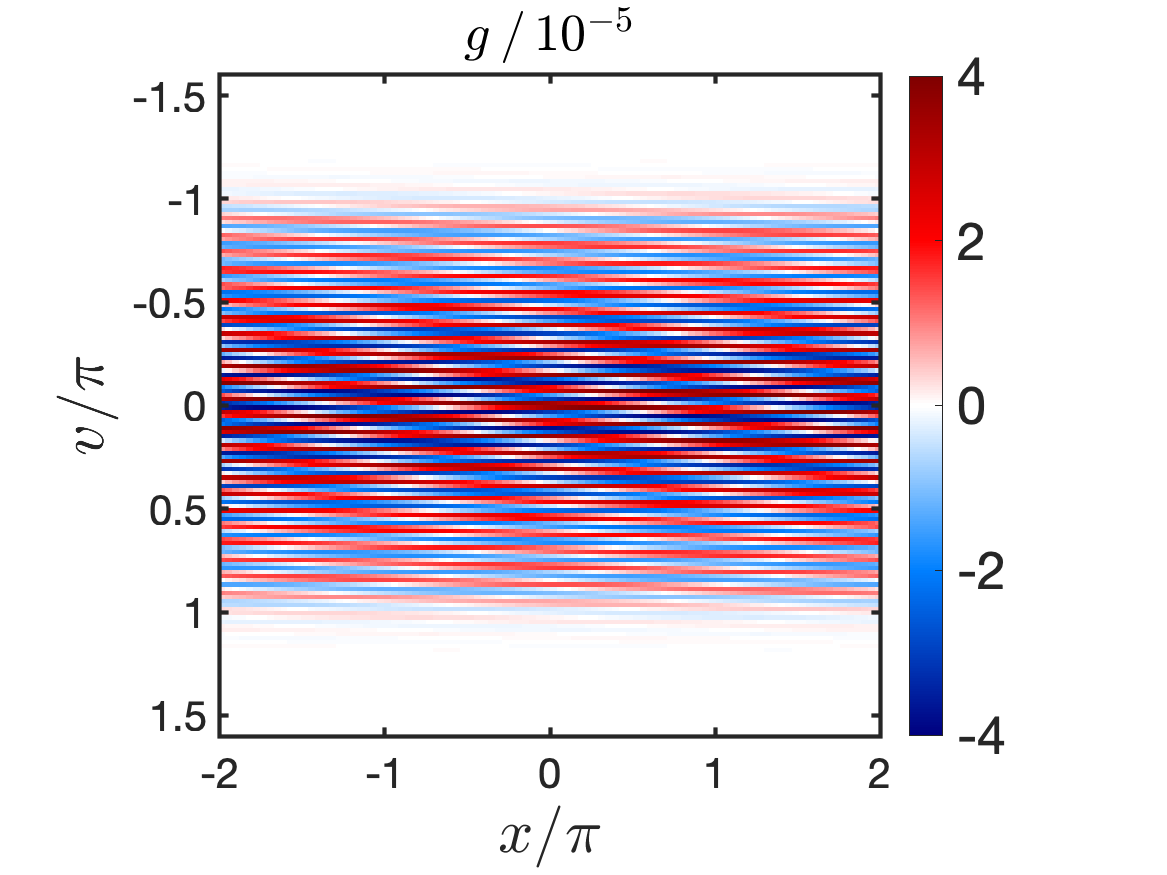}
			\label{fig:CollisionalLandauDamping.gSnapshot.0.00}
		\end{minipage}
	}
	\subfloat[$\nu = 0.25$]
	{\begin{minipage}{0.49\textwidth}
			\includegraphics[width=\linewidth]{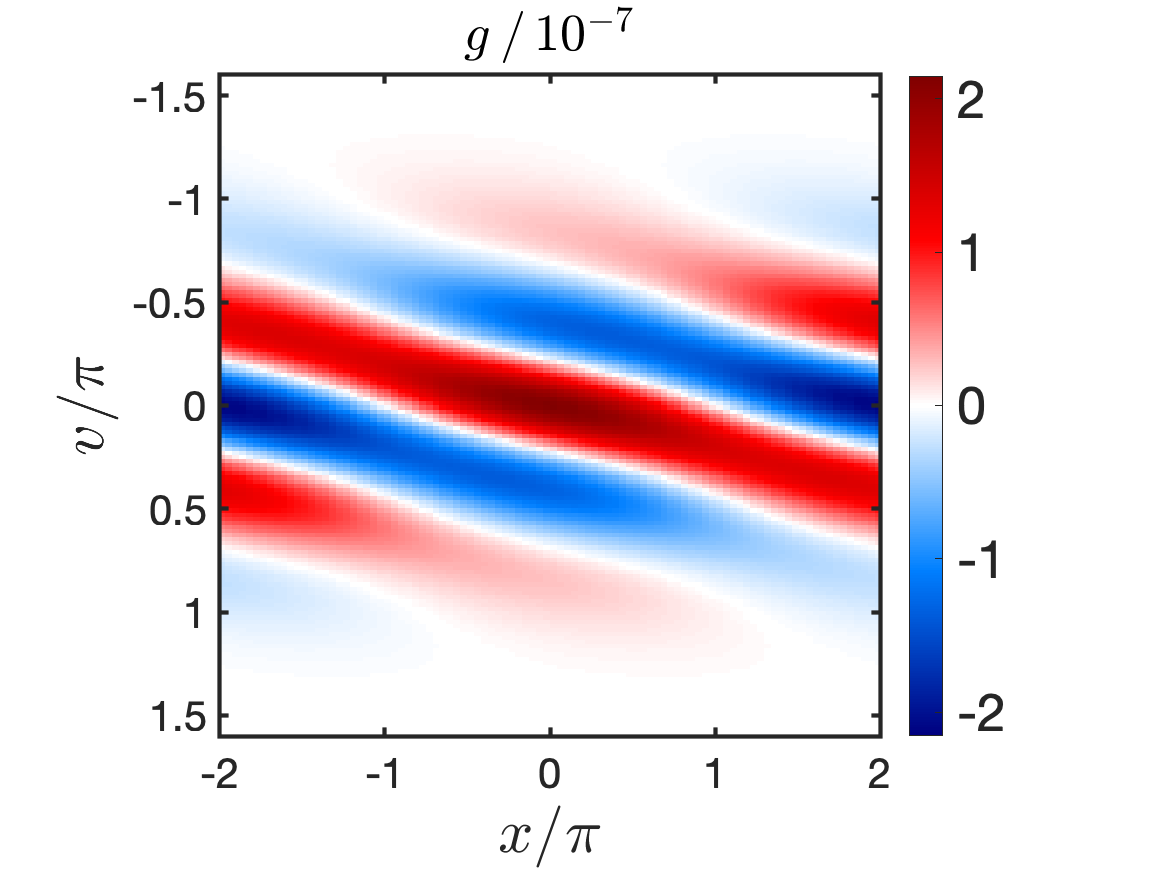}
			\label{fig:CollisionalLandauDamping.gSnapshot.0.25}
		\end{minipage}
	} \\
	\subfloat[$\nu = 1.0$]
	{\begin{minipage}{0.49\textwidth}
			\includegraphics[width=\linewidth]{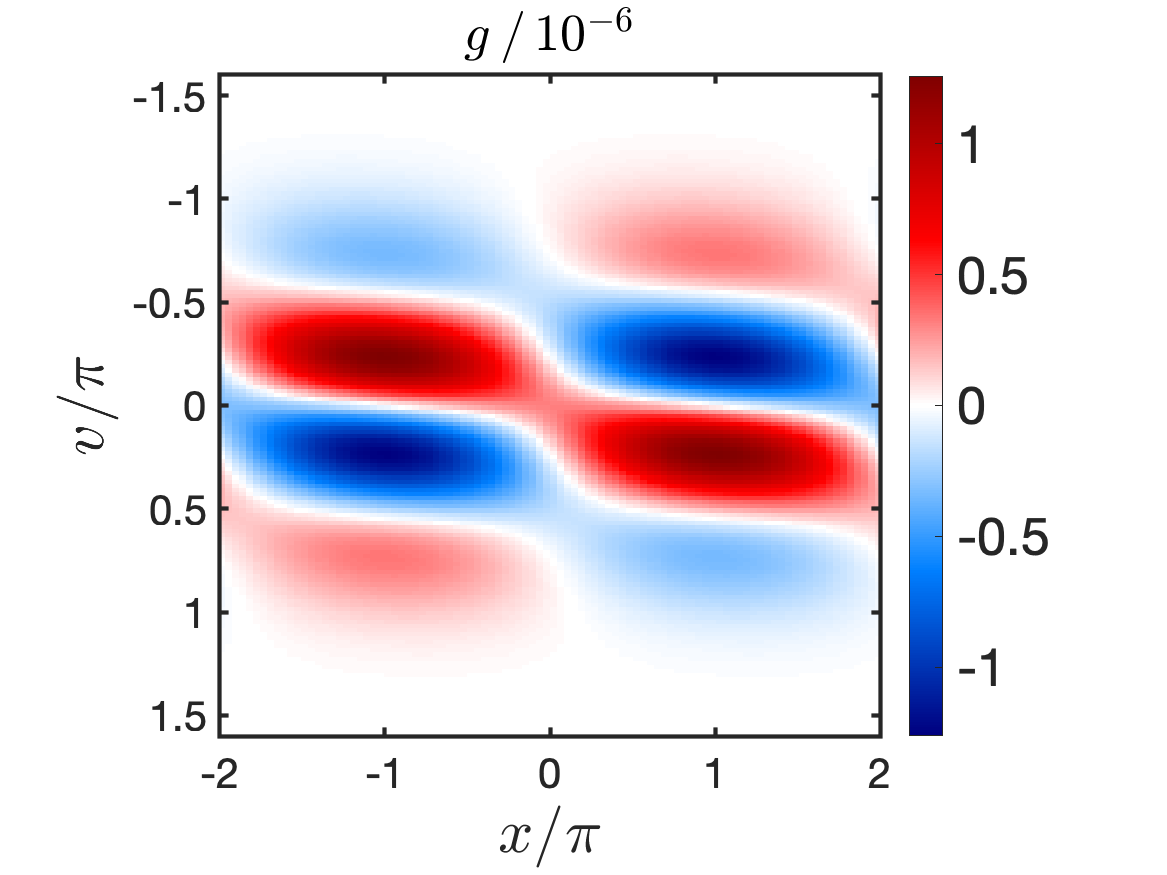}
			\label{fig:CollisionalLandauDamping.gSnapshot.1.00}
		\end{minipage}
	}
	\caption{Snapshots of the micro distribution at $t=50$ for the collisional Landau damping problem with various degrees of collisionality.  
	The upper left panel shows $g/10^{-5}$ for the collisionless case ($\nu=0.0$), the upper right panel shows $g/10^{-7}$ for $\nu=0.25$, while the bottom panel shows $g/10^{-6}$ for $\nu=1.0$.}
	\label{fig:CollisionalLandauDamping.gSnapshots}
	\end{centering}
\end{figure}

Figure~\ref{fig:CollisionalLandauDamping.gSnapshots} shows the micro distribution $g_h$ in the phase-space domain at $t=50$ for three simulations with various degrees of collisionality: $\nu=0.0$, $\nu=0.25$, and $\nu=1.0$.  
These simulations were performed using a phase-space resolution of $N^{x}\times N^{v}=32\times64$.  
In the collisionless case, the micro distribution has evolved into a filamentary structure that is marginally resolved by the velocity grid.  
(Further evolution of this model does not give good agreement with the theoretically predicted damping rate for the electrostatic potential energy; possibly due to the recurrence phenomenon discussed in, e.g., \cite{cheng_etal_2013}.)  
For the moderately collisional cases ($\nu=0.25$ and $\nu=1.0$), the phase-space resolution is adequate for resolving the structures that have developed in the micro distribution at $t=50$.  

\begin{figure}[H]
	\begin{centering}
	\includegraphics[width=1.0\linewidth]{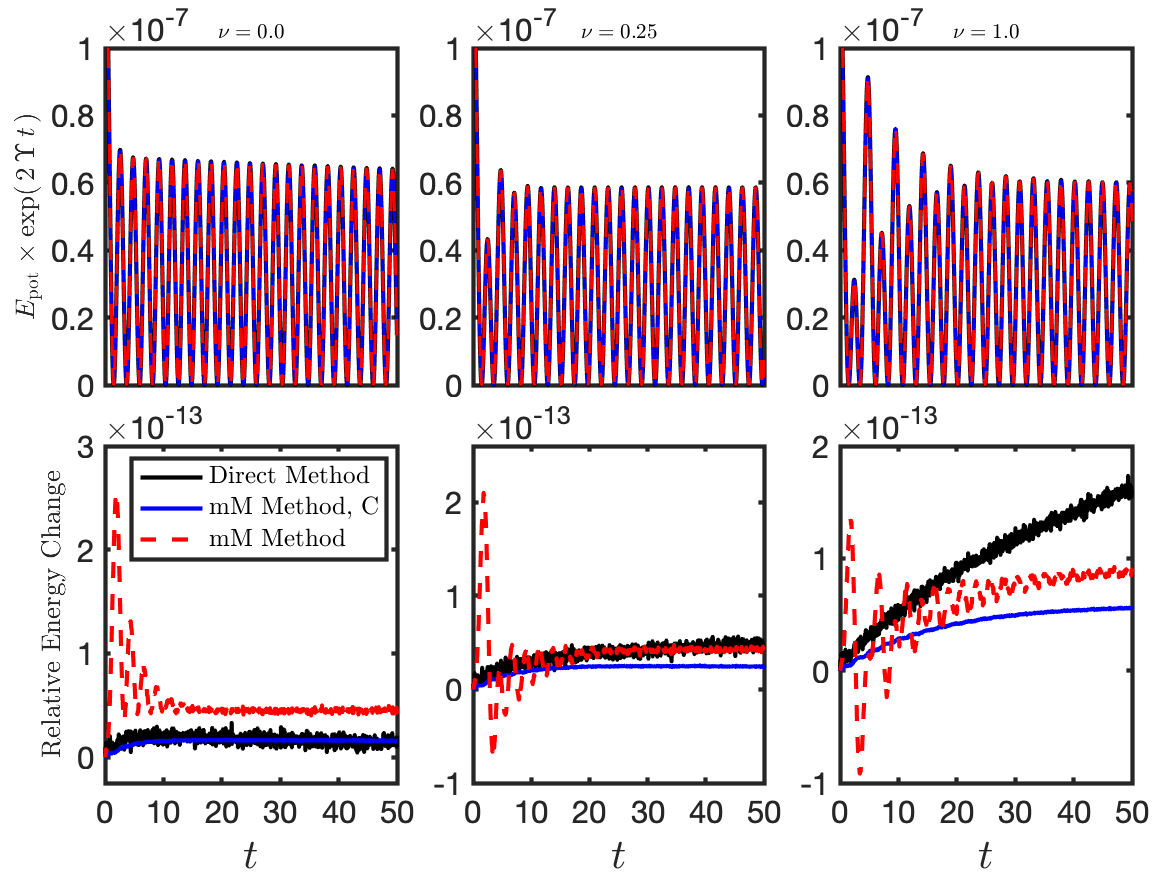}
	\caption{Energetics of the Landau damping problem for collisionless and moderately collisional cases; $\nu=0.0$ (left column), $\nu=0.25$ (middle column), and $\nu=1.0$ (right column).  
	Results obtained with the direct method (black lines) are compared with results obtained with the mM method with and without the cleaning limiter (blue and dashed red lines, respectively).  
	In the upper panels, the potential energy $E_{\rm{pot}}=\f{1}{2}\int_{D^{x}}E^{2}dx$, multiplied by the exponential factor $\exp(2\Upsilon t)$, is plotted versus time.  
	Damping rate estimates of $\Upsilon=0.1534$, $0.0746$, and $0.0312$ are used for the cases with $\nu=0.0$, $0.25$, and $1.0$, respectively (e.g.,~\cite{hakim_etal_2020}).  
	In the lower panels, the relative change in the total energy in Eq.~\eqref{eq:particlePlusPotentialEnergy} is plotted versus time.}
	\label{fig:CollisionalLandauDamping.PotentialAndTotalEnergies.all}
	\end{centering}
\end{figure}

Figure~\ref{fig:CollisionalLandauDamping.PotentialAndTotalEnergies.all} shows the time evolution of the electrostatic potential energy (top panels),
\begin{equation}
  E_{\rm{pot}} = \f{1}{2}\int_{D^{x}}E^{2}\,dx,
\end{equation}
and the relative change in the total energy (bottom panels) for the models with $\nu=0.0$, $0.25$, and $1.0$ (left, middle, and right columns, respectively.)  
Results obtained with the direct method are compared with results obtained with the mM method with and without cleaning (all computed with a phase-space resolution of $N^{x}\times N^{v}=32\times64$).  

The numerical results agree reasonably well with the theoretically predicted damping rates (given in the figure caption).  
Moreover, there is excellent agreement between the direct method and the mM method (with and without cleaning).  
In addition, the relative change in the total energy remains small in all the runs, and the results obtained with the different methods are comparable.  
In all cases, the relative change in the total energy is on the order of $10^{-13}$.  
Even the mM method without cleaning exhibits good energy conservation properties in this case.  
The reason is our consistent discretization of the micro and macro equations, combined with the fact that the velocity domain is sufficiently large so that additional contributions from integrating the Maxwellian beyond $D^{v}$ are negligible.  
Indeed, at $t=50$, the magnitude of the moments of the micro distribution, which are plotted versus position in Figure~\ref{fig:CollisionalLandauDamping.gMoments}, are on the order of $10^{-10}$ (or smaller) for the model with $\nu=0.0$, and on the order of $10^{-11}$ for the models with $\nu=0.25$ and $\nu=1.0$.  

\begin{figure}[H]
	\begin{centering}
	\includegraphics[width=0.9\linewidth]{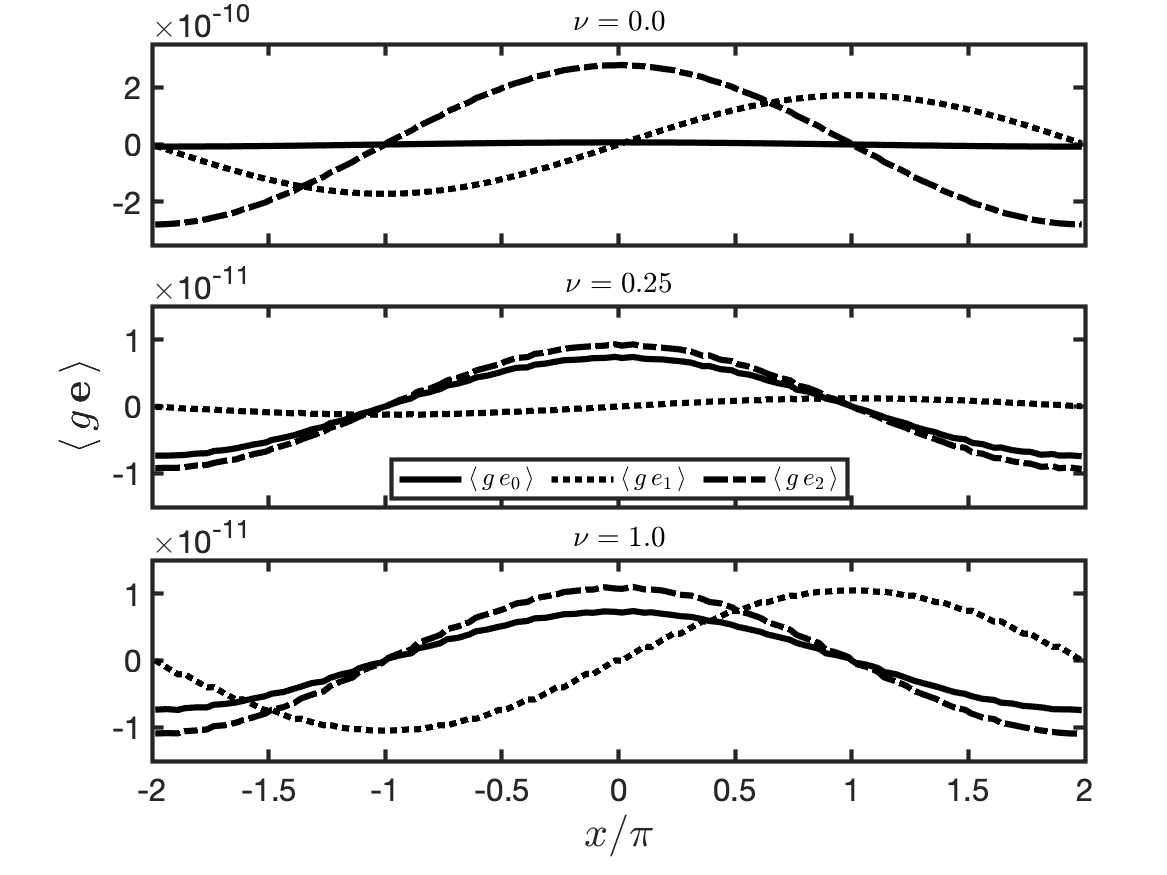}
	\caption{Plots of the moments of the micro distribution at $t=50$, obtained with the mM method without the cleaning limiter (displayed with dashed red lines in Figure~\ref{fig:CollisionalLandauDamping.PotentialAndTotalEnergies.all}).  
	In each panel, we plot $\vint{ge_{0}}$ (solid), $\vint{ge_{1}}$ (dotted), and $\vint{ge_{2}}$ (dash-dot).  
	Results for $\nu=0.0$, $0.25$, and $1.0$ are plotted in the top, middle, and bottom panels, respectively.}
	\label{fig:CollisionalLandauDamping.gMoments}
	\end{centering}
\end{figure}

\begin{figure}[H]
	\captionsetup[subfigure]{justification=centering}
	\subfloat[$N^{x}\times N^{v}=32\times16$]
	{\begin{minipage}{0.5\textwidth}
			\includegraphics[width=\linewidth]{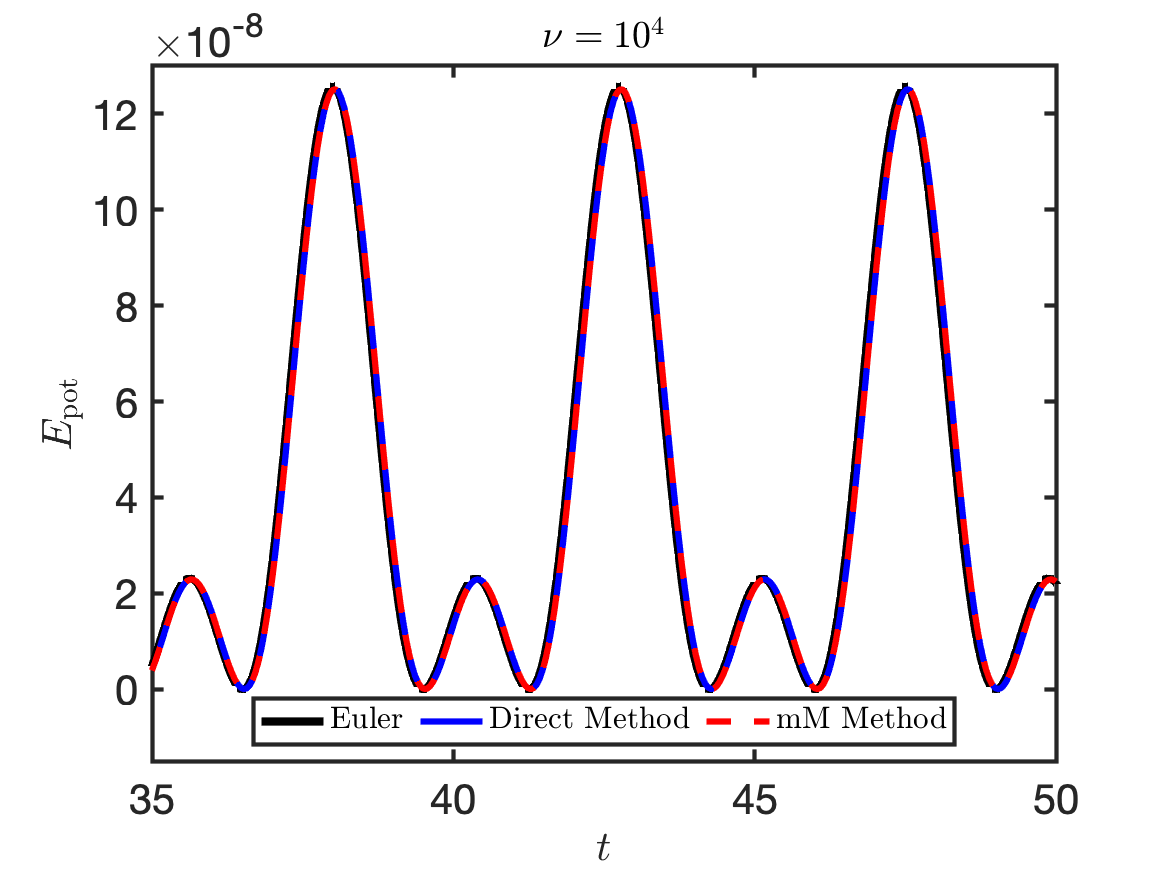}
			\label{fig:CollisionalLandauDamping.Nu_1e4.32x16}
		\end{minipage}
	}
	\subfloat[$N^{x}\times N^{v}=32\times4$]
	{\begin{minipage}{0.5\textwidth}
			\includegraphics[width=\linewidth]{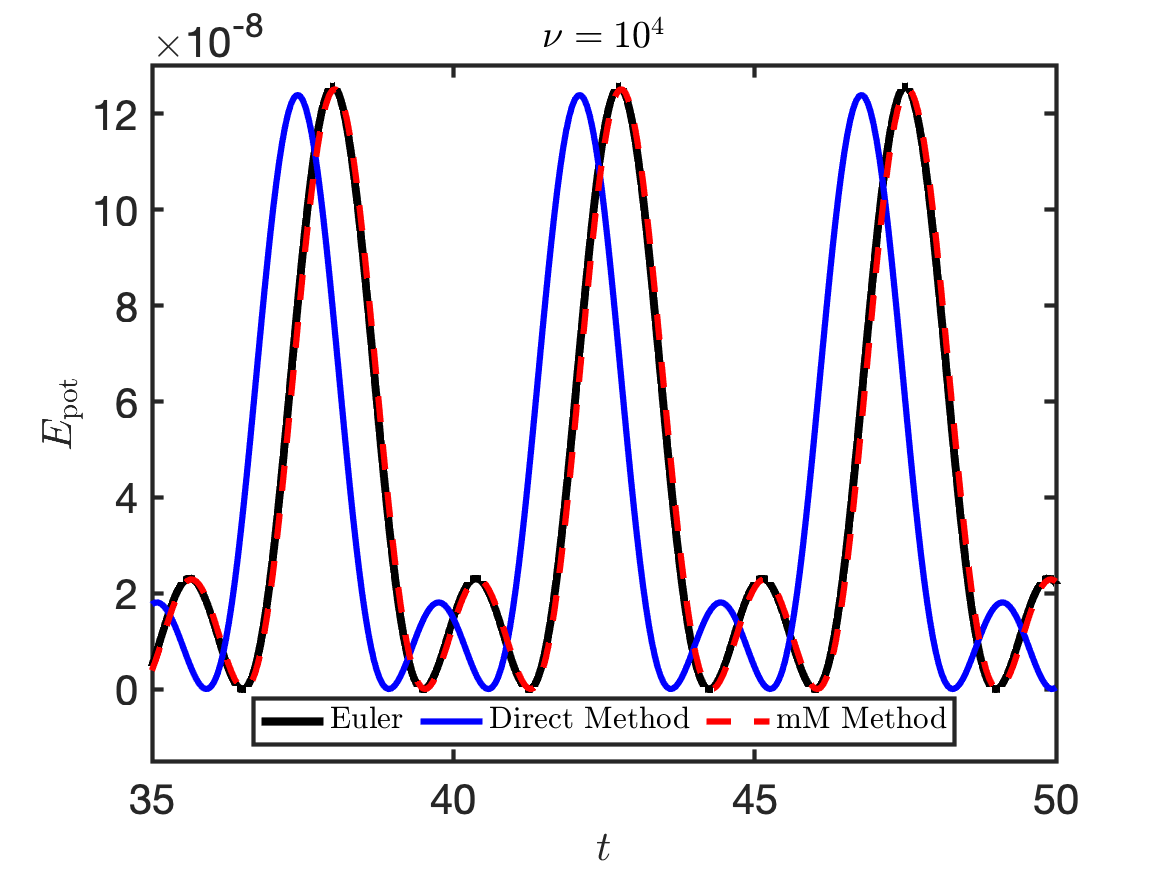}
			\label{fig:CollisionalLandauDamping.Nu_1e4.32x04}
		\end{minipage}
	}
	\caption{Plots of potential energy versus time in the strongly collisional regime ($\nu=10^{4}$).  In both panels, results obtained with the direct method (solid blue lines) and the mM method with the cleaning limiter (dashed red lines) are shown.  Results obtained with $N^{x}\times N^{v}=32\times16$ and $N^{x}\times N^{v}=32\times4$ are plotted in the left and right panels, respectively.  For reference, we also plot results obtained by solving the Euler--Poisson system with $N^{x}=128$.}
	\label{fig:CollisionalLandauDamping.Nu_1e4}
\end{figure}

Finally, we compare the performance of the direct and mM methods in the strongly collisional (fluid) regime.  
Specifically, we set $\nu=10^{4}$.  
As a reference, we use a numerical solution to the Euler--Poisson system, obtained with the DG method for the macro equations from Section~\ref{sec:dgMacro} (with $p=2$ and $g_{h}=0$), using $N^{x}=128$ and SSP-RK2 time-stepping.  
The left panel of Figure~\ref{fig:CollisionalLandauDamping.Nu_1e4} shows the potential energy versus time, as obtained with the Euler--Poisson solver, compared with the direct and mM methods, using a moderate phase-space resolution of $N^{x}\times N^{v}=32\times16$.  
With this resolution, there is very good agreement in the results obtained with the different methods.  
However, when the number of elements in velocity is reduced to $N^{v}=4$, the mM method continues to agree well with the Euler--Poisson solution, while the results obtained with the direct method suffers from a significant phase shift, as well as a slight reduction in peak amplitudes.  
These results, together with the results presented for the Riemann problem in Section~\ref{sec:numerical_Riemann}, demonstrate that the mM method offers improved accuracy with coarse velocity grids in the fluid regime.  
We note that similar findings were reported in \cite{crestetto_etal_2012}, where a particle method was used to solve for the micro distribution in a micro-macro method for solving the Vlasov--Poisson--BGK system.  
\section{Summary and Conclusions}
\label{sec:conclusions}

We have developed a numerical method for the VPLB system in one spatial and one velocity dimension, based on the mM decomposition (mM method), where $f:=\cE[\bsrho_{f}]+g$, that solves a coupled system of equations for macro and micro components ($\bsrho_{f}$ and $g$, respectively), instead of a single kinetic equation for the full distribution $f$ (direct method).  
The use of the mM decomposition is in part motivated by the following two propositions: (i) the macro component captures the dynamics in the collision dominated (fluid) regime with fewer degrees of freedom than the direct method, which suggests that the mM method provides a computational advantage in this regime; and (ii) the macro component evolves conservation laws for particle number, momentum, and energy directly --- as opposed to indirectly in the case of the direct method --- so that exact conservation of these quantities is guaranteed with the mM method, provided the macro equations are formulated in conservation form.  

We use the DG method to discretize the equations in phase-space, and evolve the resulting ODEs with IMEX time integration, where the phase-space advection terms are integrated explicitly and the collision term is integrated implicitly to avoid severe time steps restrictions for stability in collision dominated regimes.  
The discretization of the equations governing the micro and macro components is designed in a consistent manner in order to ensure that the constraints $\vint{\be g}=0$ remain satisfied throughout a simulation, provided they are satisfied initially.  
The constraint-preserving property is proved for the case of an infinite velocity domain, and we introduce a cleaning limiter to enforce $\vint{\be g}=0$ for practical simulations performed on a finite velocity domain.  
(We also discuss an alternate approach, afforded by the consistent discretization, that does not require the cleaning limiter and maintains the constraints to machine precision on a finite velocity domain.)  
For comparison, we also provide a corresponding DG-IMEX discretization to solve directly for $f$ (direct method).  
For the direct method, we prove that the implicit integration of the LB collision operator is conservative for number, momentum, and energy (similar to \cite{hakim_etal_2020} in the context of explicit time integration).  
We present numerical results that demonstrate the performance of the mM method on a set of standard test problems relevant to plasma physics applications: relaxation and Riemann problems, the two-stream instability, and collisional Landau damping.  

In the context of the Riemann problem (and to a certain degree for collisional Landau damping), we show that the mM method is more accurate than the direct method in the fluid regime when the velocity space resolution is coarse.  
However, this conclusion may only follow when the moments of the micro distribution remain small, which is the case for the consistent discretization developed here.  
To emphasize this point, we modified the discretization of the micro component to break the consistency in a way that resulted in unacceptably large violations of the moment constraints, which then resulted in artifacts in the numerical solution to the Riemann problem.  
The accuracy is restored with higher velocity space resolution, the cleaning limiter, or consistent discretization of the macro and micro components.  
The takeaway is that, to better leverage the mM method, the moment constraints on the micro distribution should be enforced.  

We have also demonstrated the conservation properties of the mM method.  
Here too enforcement of the moment constraints plays a crucial role.  
When $\vint{\be g}\ne0$, conserved quantities derived from $f(=\cE[\bsrho_{f}]+g)$ differ from those derived from the macro fields $\bsrho_{f}$, and the conservation properties of the method become ambiguous.  
By maintaining $\vint{\be g}=0$, the ambiguity is removed, and we achieve exact (to machine precision) conservation of particle number, momentum, and energy, when the equations for the macro component are formulated as local conservation laws, as is the case for the Riemann problem, where we set $E=0$.  
For problems with $E\ne0$ (i.e., two-stream instability and collisional Landau damping), the macro model is not formulated in conservative form, and exact conservation of momentum and energy is violated due to discretization errors, similar to the direct method.  
In this case, we demonstrated numerically that total (kinetic plus potential) energy conservation errors in the mM method decrease with decreasing time step as $\dt^{3}$ when using third-order time-stepping, as expected when the conservation is exact in the semi-discrete limit.  
We show in \ref{sec:energyConservationVP} that the total energy is exactly conserved in the semi-discrete limit, provided $\vint{\be g}=0$ holds, and this is consistent with our numerical results.  
Specifically, for the two-stream instability problem, we find that total energy conservation in the mM method is as good as in the direct method when $\vint{\be g}=0$, but significantly worse when the components of $\vint{\be g}$ are allowed to develop non-trivial amplitudes.  

We have considered the VP system, combined with the LB collision operator, in reduced phase-space dimensionality.  
Extensions to the present work includes full phase-space dimensionality and more realistic collision operators (e.g., as in \cite{gamba_etal_2019}).  
In full phase-space dimensionality, we expect the computational advantage of the mM method over the direct method in the fluid regime to be more pronounced (since only two more components are added to $\bsrho_{f}$), but for problems with a spatially and temporally varying degree of collisionality, some form of phase-space adaptivity \cite{hittingerBanks_2013} is desirable to better leverage the mM method.  
Maintaining positivity of the distribution function is desirable, and sometimes necessary.  
(For some of the simulations presented here, we have found that $f$ becomes negative in some regions of phase-space, but this did not prevent the simulations from running to completion.)  
Maintaining $f\ge0$ in the context of the mM method should, however, be considered.  
We hope to address some of these challenges in future work.

\section*{Acknowledgements}
We acknowledge fruitful conversations with A.~Hakim and G.~W. Hammett.  
We also thank A.~Hakim and M.~Francisquez for providing damping rate estimates used to evaluate results obtained for the collisional Landau damping problem in Section~\ref{sec:numerical}.  

\appendix

\section{Energy Conservation in the Vlasov--Poisson Subsystem}
\label{sec:energyConservationVP}

In this appendix we consider energy conservation of the VP model in the semi-discrete setting (i.e., the problem is kept continuous in time).  
When applied to the VP system, both the direct and mM methods satisfy a conservation law for total (kinetic plus potential) energy in the semi-discrete setting.  
Here, we consider only the mM method (but see, e.g., \cite{hakim_etal_2019}, in the context of a direct method.)  
Taking the time derivative of Eq.~\eqref{eq:poissonFEM}, and using Eq.~\eqref{eq:electricField}, gives
\begin{equation}
  - \int_{D^{x}}(\p_{t}E_{h})\,(\p_{x}\psi_{h})\,dx 
  	= \int_{D^{x}}(\p_{t}n_{f,h})\,\psi_{h}\,dx
  \label{eq:app.poissonFEM}
\end{equation}
for $\psi_{h}\in V_{h}$.  
We proceed with a notation where the components of the moments evolved with the macro model are denoted by $\{(\rho_{f,h})_{\ell}\}_{\ell=0}^{2}$; e.g., $n_{f,h}:=(\rho_{f,h})_{0}$.

Next, Eq.~\eqref{eq:app.poissonFEM} is combined with the mM method for the Vlasov equation to derive the semi-discrete conservation statement.  
First, from the first component of Eq.~\eqref{eq:macroSemiDiscrete}, with $\varphi_{h}:=\psi_{h}\in V_{h}$, we obtain
\begin{align}
  \int_{D^{x}}(\p_{t}(\rho_{f,h})_{0})\,\psi_{h}\,dx 
  &= \int_{D^{x}}\big[\,{\rm F}_{0}(\bsrho_{f,h})+{\rm f}_{0}(g_{h})\,\big]\,(\p_{x}\psi_{h})\,dx, \nonumber \\
  &= \int_{D^{x}}\big[\,(\rho_{f,h})_{1}+\vint{g_{h}}_{D^{v}}\,\big]\,(\p_{x}\psi_{h})\,dx,
  \label{eq:app.particleConservationMM}
\end{align}
where ${\rm F}_{0}(\bsrho_{f,h})=(\rho_{f,h})_{1}$ and ${\rm f}_{0}(g_{h})=\vint{g_{h}}_{D^{v}}$ are the first components of $\bF(\bsrho_{f,h})$ and $\bff(g_{h})$, respectively.  
In Eq.~\eqref{eq:app.particleConservationMM} (and in Eq.~\eqref{eq:app.energyConservationMM} below), we assume a periodic spatial domain (or that the numerical fluxes vanish at the spatial domain boundaries).  
Then combining Eqs.~\eqref{eq:app.poissonFEM} and \eqref{eq:app.particleConservationMM}, with $\psi_{h}:=\Phi_{h}$, gives
\begin{equation}
  \int_{D^{x}}\big[\,(\p_{t}E_{h}) + (\rho_{f,h})_{1} + \vint{g_{h}}_{D^{v}}\,\big]\,E_{h}\,dx = 0.  
  \label{eq:poissonParticleConservationCombinedMM}
\end{equation}
From the third component of Eq.~\eqref{eq:macroSemiDiscrete}, with $\varphi_{h}:=1$, we obtain
\begin{equation}
  \int_{D^{x}}\p_{t}(\rho_{f,h})_{2}\,dx = \int_{D^{x}}E_{h}\,(\rho_{f,h})_{1}\,dx, 
  \label{eq:app.energyConservationMM}
\end{equation}
which, when combined with Eq.~\eqref{eq:poissonParticleConservationCombinedMM}, gives
\begin{equation}
  \int_{D^{x}}\big[\,\p_{t}(\rho_{f,h})_{2} + \f{1}{2}\p_{t}E_{h}^{2}\,\big]\,dx = - \int_{D^{x}}\vint{g_{h}}_{D^{v}}\,E_{h}\,dx.  
  \label{eq:energyConservationVP_MM}
\end{equation}
Here, the zeroth moment of $g_{h}$, which emanates from the particle conservation equation in Eq.~\eqref{eq:app.particleConservationMM}, appears on the right-hand side.  
Thus, if $\vint{g_{h}}_{D^{v}}\ne0$, conservation of total energy is compromised, and this conservation error is in addition to errors introduced when the system is discretized in time.  
We investigate the effects of losing conservation in the context of the two-stream instability test in Section~\ref{sec:numerical_TwoStream}.


\bibliography{references}
\end{document}